\newtheorem{theorem}{Theorem}[section]
\newtheorem{proposition}[theorem]{Proposition}
\newtheorem{lemma}[theorem]{Lemma}
\newtheorem{remark}{Remark}[section]
\newtheorem{definition}{Definition}[section]
\DeclareMathOperator*{\esssup}{ess\,sup}
\newcommand{\RR}{\mathbb{R}}
\newcommand{\LL}{\mathbb{L}}
\newcommand{\bX}{\bm{X}}
\newcommand{\cC}{\mathcal{C}}
\newcommand{\cD}{\mathcal{D}}
\newcommand{\cN}{\mathcal{N}}
\newcommand{\cE}{\mathcal{E}}
\newcommand{\cS}{\mathcal{S}}
\newcommand{\cT}{\mathcal{T}}
\newcommand{\cF}{\mathcal{F}}
\newcommand{\cM}{\mathcal{M}}
\newcommand{\cL}{\mathcal{L}}
\newcommand{\Ind}{\mathbbm{1}}
\newcommand{\E}{\mathrm{E}}
\renewcommand{\P}{\mathrm{P}}
\newcommand{\norm}[1]{ \left\Vert #1 \right\Vert}
\begin{document}

\title{Bayesian Inference for \textit{k}-Monotone Densities with Applications to Multiple Testing
\thanks{
The research was supported in part by NSF Grant number DMS-1916419.
}}

\author{Kang Wang \thanks{Department of Statistics, North Carolina State Universtiy. Email: kwang22@ncsu.edu}
\and
Subhashis Ghosal\thanks{Department of Statistics, North Carolina State Universtiy. Email: sghosal@stat.ncsu.edu}}

\date{}
\maketitle

\begin{abstract}
\noindent
Shape restriction, like monotonicity or convexity, imposed on a function of interest, such as a regression or density function, allows for its estimation without smoothness assumptions. The concept of $k$-monotonicity encompasses a family of shape restrictions, including decreasing and convex decreasing as special cases corresponding to $k=1$ and $k=2$. We consider Bayesian approaches to estimate a $k$-monotone density. By utilizing a kernel mixture representation and putting a Dirichlet process or a finite mixture prior on the mixing distribution, we show that the posterior contraction rate in the Hellinger distance is $(n/\log n)^{- k/(2k + 1)}$ for a $k$-monotone density, which is minimax optimal up to a polylogarithmic factor. When the true $k$-monotone density is a finite $J_0$-component mixture of the kernel, the contraction rate improves to the nearly parametric rate $\sqrt{(J_0 \log n)/n}$. Moreover, by putting a prior on $k$, we show that the same rates hold even when the best value of $k$ is unknown. 
A specific application in modeling the density of $p$-values in a large-scale multiple testing problem is considered. Simulation studies are conducted to evaluate the performance of the proposed method.

\vskip 0.3cm

\noindent
{\bf Keywords:} \textit{k}-monotonicity;
Shape restriction;
Contraction rate;
Mixture representation;
Dirichlet process mixture;
Adaptation.
\end{abstract}

\section{Introduction}
\label{sec:introduction}

A regression or density function is typically estimated under the assumption of smoothness. However, in certain cases, it is natural to impose specific shape restrictions like monotonicity or convexity. For instance, in survival analysis, The density of the failure time can often be assumed to be nonincreasing. In such scenarios, a natural and sensible estimator should comply with the given shape restriction. This allows for estimation without relying on smoothness assumptions. One advantage of estimation methods based on shape restrictions instead of smoothness is that there is no need to select bandwidths of kernels or degrees of splines. Nonincreasing or convex nonincreasing probability densities naturally arise in several contexts, particularly in inverse problems where data is indirectly observed. For example, consider Hampel's bird migration problem (see, for instance, Section 4.3 of \cite{Groeneboom2014}). This problem involves estimating the distribution of the time a population of birds spends in an oasis, based on the time elapsed between two exact captures. By employing a Poisson process model for the number of captures, the distribution function of the unobserved sojourn time can be expressed in terms of the density function of the observed time interval between the captures. As a result, the density of the time interval is proven to be convex and non-increasing on the interval $(0, \infty)$.

The study of monotone nonincreasing densities was pioneered by \cite{Grenander1956}, who obtained the nonparametric maximum likelihood estimator, commonly known as the Grenander estimator. The pointwise asymptotic distribution of the Grenander estimator was subsequently derived by \cite{Rao1969} and was revisited by \cite{groeneboom1985} using a switch relation technique. The nonparametric maximum likelihood estimator of a nonincreasing density under random right-censoring was investigated by \cite{Huang1994, Huang1995}. Convergence rates and asymptotic distributions of the nonparametric maximum likelihood estimator under convexity have also been extensively studied, as shown in works such as \cite{Mammen1991, Groeneboom2001}, among others. One notable advantage of function estimation under shape restrictions is that it eliminates the need for selecting tuning parameters. The global shape restriction itself serves as a regularization mechanism for the estimator and helps denoise the observed data.

Both nonincreasing and convex nonincreasing density classes can be viewed as special cases of $k$-monotone density classes. Roughly speaking, the class consists of (a dense set of) functions whose odd-order derivatives are non-positive and even-order derivatives are non-negative, up to order $k$. A formal definition will be given in the next section. From \cite{Williamson1956}, a $k$-monotone density on $(0,\infty)$ can be represented as a scale mixture of scaled Beta($1,k$) kernels. Moreover, a density on $(0,\infty)$, which is $k$-monotone for every $k$, is infinitely smooth and can be expressed as a scale mixture of exponential densities (see \cite{Feller1971}, page 439). In \cite{Jewell1982}, Jewell used a mixture of exponential distributions to model the lifetime distribution and studied the maximum likelihood estimator. The classes of $k$-monotone densities for intermediate values of $k$ gradually transition from the class of monotone density (the largest class) to that of completely monotone densities (the smallest class). The class of 
$k$-monotone densities for a given integer $k$, as a potentially useful model in nonparametric shape-restricted inference, has received some attention.
The study of the nonparametric maximum likelihood estimator encompasses various aspects, including estimator characterization, convergence rates, and asymptotic distributions; see \cite{Balabdaoui2007, Balabdaoui2010, Gao2009}. It is worth noting that the notion of $k$-monotonicity need not be restricted only to the positive half-line. In some applications, such as modeling $p$-values in a multiple-testing framework, as discussed below, the unit interval is the domain where the density is defined. In such cases, the mixture representation changes, though. 

This article focuses on Bayesian approaches to $k$-monotone density estimation on the unit interval. By modifying the mixture representation introduced in \cite{Williamson1956}, we can adopt the well-established technique of Bayesian nonparametrics in mixture models. As no restrictions are imposed on the mixing distribution, the Dirichlet process prior, introduced by \cite{Ferguson1973}, can be considered, leading to a Dirichlet process scale-mixture of scaled beta kernels prior for the resulting $k$-monotone density. Dirichlet process mixtures with various kernels have been explored for Bayesian density estimation; see \cite{Ferguson1983, Lo1984, Brunner1989, escobar1995, Petrone1999}, among others. Markov chain Monte Carlo computational techniques have been invented to compute posterior characteristics; see \cite{escobar1995, maceachern1998estimating, Walker2007} and others. Recently developed faster computational methods such as variational algorithms (cf. \cite{Blei2006}) make the methodology appealing for practical implementation with large datasets and even allows computation in real time.
The posterior distribution based on a suitable Dirichlet process mixture prior concentrates near the true value of the density with high true probability under appropriate conditions as shown by \cite{Ghosal1999} for the normal kernel using the Schwartz theory of posterior consistency; see \cite{ghosal2017fundamentals} for details. An extension and a multivariate generalization were obtained respectively by \cite{tokdar} and \cite{Wu2010}. Rates of contraction of the posterior distribution of the Dirichlet mixture of normal prior have been established under various settings by \cite{Ghosal2001a, Ghosal2007, shen2013} using the general theory of posterior contraction rate \cite{Ghosal2000, ghosal2017fundamentals}. Other kernels were treated in the works of  \cite{Ghosal2001b, Petrone2002, Kruijer2008, Wu2008, Rousseau2010, salomond2014concentration}. 

Among shape-restricted inference problems, for the special case of monotone nonincreasing densities on the half-line, the density can be expressed as a mixture of the uniform distributions on $[0,\theta]$. This representation has been used for Bayesian estimation of a symmetric unimodal density by \cite{Brunner1989} and for a unimodal density by \cite{Brunner1992}, using Dirichlet process mixture priors, but no convergence results were obtained. The posterior contraction rate for  monotone nonincreasing densities on the half-line was obtained by \cite{salomond2014concentration}, who  considered the Dirichlet process mixture and finite mixture priors and obtained the optimal rate $n^{-1/3}$ up to some logarithmic factors. A challenge in applying the general theory of posterior contraction in this context is that the prior concentration condition on the Kullback-Leibler neighborhood around the true density in \cite{Ghosal2000} may not be satisfied since the support of the mixture kernel is only on a finite interval whereas the true density can spread out over the positive real half-line. To overcome these, the prior concentration condition was modified in \cite{salomond2014concentration} using a suitably 
truncated density class by dropping a negligible part. Another issue is that the metric entropy increases with the monotone density class's upper bound and support length. Salomond \cite{salomond2014concentration} also modified the metric entropy condition on a sieve with a growing upper bound to address the complication. 
A closely related work is by \cite{Martin2018}, who used empirical priors in a finite mixture model setting for the same problem. The optimal posterior contraction rate $n^{-1/3}$ in Hellinger distance up to a logarithmic factor was derived based on the theory of the empirical Bayesian approach in \cite{Martin2016}. Unlike \cite{Ghosal2000,ghosal2017fundamentals}, the condition requires a sufficient prior mass of a data-dependent Kullback-Leibler neighborhood centered at the (sieve) maximum likelihood estimator instead of the true density. Mariucci et al. \cite{Mariucci2020} obtained the near minimax-optimal posterior contraction rate for a log-concave density using an exponentiated Dirichlet process mixture prior. 

Shape-restricted densities arise naturally in certain multiple testing applications, 
such as large-scale simultaneous hypothesis testing of DNA microarray data. To assess and control the error rate in simultaneous testing, inference on the proportion of the true null hypotheses is instrumental.  Estimation procedures were developed based on the observed $p$-values for these tests in \cite{Storey2002, Storey2003, Langaas2005, TangGhosalRoy}. These methods use the facts that the $p$-values from true null hypotheses are distributed as uniform over the unit interval, at least approximately, while those from the alternative have density highly concentrated near zero and decaying sharply from there. In \cite{Langaas2005}, the density of the $p$-values is modeled as a monotone density which is $k$-monotone  with $k=1$ and $2$. Tang et al. \cite{TangGhosalRoy} used a mixture of beta densities with a singularity at zero, analogous to a completely monotone density (i.e. $k$-monotone with $k=\infty$), to model the density of $p$-values and put a Dirichlet process prior on the mixing distribution. This application motivates the study of $k$-monotone densities on the unit interval with an additional uniform component. In this framework, using the mixture representation of a $k$-monotone density and putting either a finite mixture or a Dirichlet process mixture prior, we shall obtain the posterior contraction rates using the general theory of posterior contraction in \cite{Ghosal2000} with respect to the Hellinger metric. The presence of the additional uniform component makes the densities bounded away from zero, which is instrumental in controlling the Hellinger distance and the Kullback-Leibler divergence. We obtain the minimax-optimal rate $n^{-k/(2k+1)}$ up to a polylogarithmic factor for any given value of $k$, thus providing a spectrum of rates corresponding to different regularity guided by the shape-hierarchy, analogous to the smoothness hierarchy. If the true density is a $J_0$-mixture of the kernel in the mixture representation of a $k$-monotone density, we also show that the same procedure achieves the nearly parametric rate $\sqrt{(J_0 \log n)/n}$, which is difficult to achieve using a non-Bayesian method. By putting a suitable prior on $k$, the Bayesian approach can adapt to the optimal posterior contraction rate as if the best $k$ were known. This is a significant merit of the proposed Bayesian procedure, as to the best of our knowledge, the existing methods are all for a given value of $k$. A mathematical relationship in a statistical inverse problem may imply the $k$-monotone shape (\cite{Groeneboom2014, Balabdaoui2007}), but 
in applications such as \cite{Langaas2005}, the best $k$ fitting the data may not be clear to the researcher. The proposed method can automatically select the best underlying $k$ while maintaining the optimal convergence rate.

The organization of this paper is as follows. In the next section, we introduce the notion of a $k$-monotone density, characterize it through a mixture representation, and present an important approximation result using finite mixtures.
In Section~\ref{sec:rate}, we introduce the prior and present results on the posterior contraction rates. In Section \ref{sec:adaptive}, we consider Bayesian estimation with unknown $k$. The application to multiple testing is discussed in Section \ref{sec:Multi-testing}. We present a simulation study in Section \ref{sec:simulation}. 
The main conclusions are summarized in Section \ref{sec:Disc}.
Proofs are postponed to Section \ref{sec:proof} and the appendix.

\section{Preliminaries}
\label{sec:defkmono}


We shall use the following notations throughout the paper. 
Let $\RR$ be the set of real numbers and  
$\Delta_J$ be the unit $J$-simplex, $J=1,2,\ldots$.
For a set $A$, let $\Ind_A$ stand for the indicator of $A$. 
Let $f_+ = \max(f, 0)$ denote the positive part of the function $f$, and $f(x-)$ (respectively, $f(x+)$)  denote the left (respectively, right) limit of $f$ at $x$ when it exists.
If $\int |f|^p < \infty$, we use $\norm{f}_p$ to denote the $\LL_p$-norm, and $\norm{f}_{\infty}$ to denote the essential supremum $\esssup|f|$ if $f$ is measurable and bounded almost everywhere.
For $1\le p \le \infty$, the space of $p$-integrable functions on the domain $A$ is denoted by $\LL_p (A)$. Additionally, we define $d_p(f, \cS) = \inf\{\norm{f-s}_p:s \in \cS\}$ for $f\in \LL_p (A)$ and $\cS\subseteq \LL_p (A)$. The Dirac delta measure at $\theta$ will be denoted by $\delta_\theta$. The notation $\mathrm{Dir}(J; \omega_1,\ldots,\omega_J)$ stands for the Dirichlet distribution on the probabilistic $J$-simplex with parameters $\omega_1,\ldots,\omega_J$. 
For a probability measure $P$, absolutely continuous with respect to the Lebesgue measure, we denote its density by the corresponding lowercase letter $p$.  
The Hellinger distance between two densities $p_1$ and $p_2$ is defined by $d_H(p_1, p_2) = \norm{\sqrt{p_1} - \sqrt{p_2}}_2$. The Kullback-Leibler divergence and Kullback-Leibler variation are respectively given by $K(p_1, p_2) = \int p_1 \log(p_1/p_2)$ and $V(p_1, p_2) = \int p_1 [\log(p_1/p_2)]^2$.
For a semimetric space $(\cT, d)$, the metric entropy refers to the logarithm of the covering number $\cN(\epsilon, \cT, d)$, while the bracketing entropy refers to the logarithm of the bracketing number $\cN_{[\,]}(\epsilon, \cT, d)$; 
see Section 2.1 of \cite{vanderVaart1996} for details. 
For two real positive sequences $\{a_n: n\ge 1\}$ and $\{b_n: n\ge 1\}$,
the notation $a_n\lesssim b_n$ (equivalently, $b_n\gtrsim a_n$) means that $a_n \le C b_n$ for some constant $C>0$. We say $a_n \asymp b_n$ if $a_n \gtrsim b_n$ and $a_n \lesssim b_n$.

\begin{definition}[$k$-monotonicity]
\label{def:k-monotone}
Let $I$ be subinterval of $(0,\infty)$. A function $f$ on $I$ is said to be $1$-monotone on $I$ if $f$ is nonnegative and nonincreasing. For $k \ge 2$, $f$ is said to be $k$-monotone on $I$ if $(-1)^j f^{(j)}$ is nonnegative, nonincreasing and convex on $I$, for every $j = 0, \ldots, k- 2$. 
\end{definition}

Let the class of $k$-monotone functions on $I$ be denoted by $\cF^k_I$. The class of $k$-monotone probability densities on $I$ will be denoted by $\cD^k_I = \{g \in \cF^k_I: \int g = 1\}$. We shall be concerned about $k$-monotone functions on a bounded interval, which can be taken to be the unit interval $(0,1)$ without loss of generality. Since the domain is fixed at $(0,1)$ throughout, we shall drop $(0,1)$ from the notations  $\cF^k_{(0,1)}$ and $\cD^k_{(0,1)}$, and simply write $\cF^k$ and $\cD^k$ respectively. 

A closely related concept is $k$-convexity, which is sometimes referred to as $k$-monotonicity by some authors in the approximation theory literature. There are multiple ways to characterize $k$-convex functions, and we present an equivalent definition in the following.

\begin{definition}[$k$-convexity]
\label{def:k-convex}
A function $f:(0,1)\to \RR$ is said to be $1$-convex on $(0,1)$ if $f$ is nondecreasing, while for $k\ge 2$, $f$ is said to be $k$-convex on $(0,1)$ if $f^{(k-2)}$ exists and is convex on $(0,1)$. We shall write $\cC^k$ for the space of $k$-convex functions on $(0,1)$.  
\end{definition}

Introduce a probability density function 
\begin{align}\label{psi_k}
     \psi_k(x,\theta)= \frac{k}{\theta}\left(1-\frac{x}{\theta}\right)_+^{k-1}, \text{ for }  x>0, \; \theta > 0.
\end{align}
Note that $\psi_{k}(\cdot, 1)$ is the probability density function of Beta$(1,k)$. The following result shows that $k$-monotone functions and densities on $(0,1)$ admit a useful mixture representation using the kernel $\psi_k$.  

\begin{lemma}[Characterization of $k$-monotone functions and densities on $(0,1)$]
\label{lemma:charac}
A function $f\in \cF^k$ if and only if there exist a  nondecreasing function $\gamma(t)$ on $(0,1)$ and $\alpha_j\ge 0$ for $j=0,1,\ldots, k-2$, such that, for $x\in(0,1)$,
\begin{align}
\label{repre_kmofunc}
    f(x) = \sum_{j=0}^{k-1}  \alpha_j(1-x)^j + \int_0^1 \psi_k(x, t) d \gamma(t).
\end{align}
A density $g\in \cD^k$ if and only if there exists a probability measure $Q$ and $(\beta_j: 0\le j \le k)\in \Delta_{k+1}$ such that, for every $x\in (0,1)$,
\begin{align}
\label{kmonodens}
    g(x) = \sum_{j=0}^{k-1} \beta_j\psi_{j+1}(x,1) + \beta_k \int_0^1 \psi_k(x, \theta) d Q(\theta).
\end{align}
\end{lemma}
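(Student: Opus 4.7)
The plan is to prove both directions of both assertions, with necessity driving the argument; sufficiency and the passage to densities then follow by direct calculation.

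For sufficiency of \eqref{repre_kmofunc}, I would first verify that the building blocks are $k$-monotone on $(0,1)$. A direct computation shows that $(-1)^i \partial_x^i (1-x)^j$ equals a nonnegative constant times $(1-x)^{j-i}$ for $i \le j$ and vanishes otherwise, and that $(-1)^i \partial_x^i \psi_k(x,t)$ is a nonnegative multiple of $(1-x/t)_+^{k-1-i}/t^{i+1}$. In both cases the resulting functions are nonnegative and nonincreasing for $i \le k-1$ and convex for $i \le k-2$, so $(1-x)^j \in \cF^k$ for $0 \le j \le k-1$ and $\psi_k(\cdot, t) \in \cF^k$ for $t \in (0,1]$. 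Since $\cF^k$ is closed under nonnegative linear combinations and under pointwise limits that preserve the defining inequalities (hence under mixing against a nondecreasing $\gamma$), the right-hand side of \eqref{repre_kmofunc} belongs to $\cF^k$.

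For necessity, the key observation is that, taking $f^{(k-1)}$ to be the right derivative of $f^{(k-2)}$ (which exists everywhere because $(-1)^{k-2} f^{(k-2)}$ is convex), the function $F_{k-1}(x) := (-1)^{k-1} f^{(k-1)}(x)$ is nonnegative and nonincreasing on $(0,1)$. Writing $F_j(x) := (-1)^j f^{(j)}(x)$ and $c_j := F_j(1-) \ge 0$ for $j = 0, \ldots, k-2$, the identity $F_j(x) = c_j + \int_x^1 F_{j+1}(t)\, dt$ unfolds inductively; Fubini collapses the resulting nested integrals into polynomial and remainder parts to give
\[ f(x) = \sum_{j=0}^{k-2} c_j \frac{(1-x)^j}{j!} + \int_x^1 \frac{(t-x)^{k-2}}{(k-2)!}\, F_{k-1}(t)\, dt. \]
Next, expressing the nonnegative nonincreasing $F_{k-1}$ as $F_{k-1}(t) = \nu([t,1])$ for a nonnegative Radon measure $\nu$ on $(0,1]$ and swapping once more, the remainder becomes $\int_{(0,1]} (s-x)_+^{k-1}/(k-1)!\, d\nu(s)$. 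Separating the potential atom $\nu(\{1\})$ (which contributes $\nu(\{1\})(1-x)^{k-1}/(k-1)!$ and is absorbed into the polynomial part) and converting $(s-x)_+^{k-1} = (s^k/k)\, \psi_k(x, s)$ produces \eqref{repre_kmofunc} with $\alpha_j \ge 0$ and a nondecreasing $\gamma$ given by $d\gamma(s) = (s^k/k!)\, d\nu(s)$ on $(0,1)$.

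For the density representation, integrating \eqref{repre_kmofunc} over $(0,1)$ and using $\int_0^1 \psi_k(x,s)\, dx = 1$ (via the substitution $u = x/s$) together with $\int_0^1 (1-x)^j\, dx = 1/(j+1)$ turns the normalization $\int g = 1$ into $\sum_{j=0}^{k-1} \alpha_j/(j+1) + \gamma((0,1)) = 1$; setting $\beta_j := \alpha_j/(j+1)$ for $j \le k-1$ and $\beta_k := \gamma((0,1))$, noting that $(j+1)(1-x)^j = \psi_{j+1}(x,1)$, and normalizing $\gamma$ to a probability measure $Q$ then yields \eqref{kmonodens}. The main technical obstacle is handling boundary behavior: one must verify that each $F_j(1-)$ exists as a finite nonnegative limit (immediate from monotonicity and nonnegativity of $F_j$) and check that the atom at $s=1$ in $\nu$ is properly absorbed into the polynomial coefficient $\alpha_{k-1}$ so that $\gamma$ can be taken as a nondecreasing function on the open interval $(0,1)$ as claimed.
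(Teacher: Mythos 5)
Your proof is correct and follows essentially the same route as the paper's: a Taylor-type expansion anchored at the right endpoint $1$ (your iterated unfolding of $F_j(x)=c_j+\int_x^1 F_{j+1}$ is the paper's Taylor expansion with integral remainder plus one integration by parts), with the mixing measure obtained by reweighting the Stieltjes measure of $(-1)^{k-1}f^{(k-1)}$ by $t^k/k!$, and the density case handled by normalization. Your explicit treatment of the possible atom of $\nu$ at $s=1$ (absorbed into $\alpha_{k-1}$) is a slightly more careful rendering of the same boundary step the paper performs by letting $a\to 1$.
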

The proof of this lemma is based on Taylor expansion and further integration by parts of the integral form remainder term. Similar results can be found in \cite{Gao2008} for a $k$-monotone distribution function on a compact interval and \cite{Williamson1956} for $k$-monotone function on the positive half real line. We defer the proof of the lemma to the appendix.

A crucial property of $k$-monotone functions for deriving posterior contraction rates is that they can be approximated effectively by $k$-monotone free-knot spline functions in the $\LL_p$-metric, $1\le p<\infty$. This property is derived from Theorem 1.1 of \cite{Kopotun2003} on a shape-preserving approximation of $k$-convex functions.

Let $\cS_{N,k}$ denote the space of free knot splines of degree $k-1$ with $N$ interior knots in $[0,1]$. 
To align with the $k$-convex functions, we introduce a reflection transformation to the argument. 
Let $\tau(x) = 1-x$ for $x\in(0,1)$ and denote $\check{\cF}^k = \{f\circ \tau: f\in \cF^k\}$. 
Then shape preserving approximation to $\cF^k$ is essentially the same problem of shape-preserving approximation to $\check{\cF}^k$.
By Definition \ref{def:k-monotone}, for $k\ge 2$, $f\in \check{\cF}^k$ if and only if $f^{(j)}$ is nonnegative, nondecreasing, and convex, for every $j=0,1,\ldots, k-2$. 
It is then clear that $\check{\cF}^k$ is a subclass of $\cC^k$. Moreover, for $h \in \cC^k$ and $k\ge 2$, let $h^{(k-1)}$ denote the right derivative of $h^{(k-2)}$, which is well defined since $h$ is a convex function on $(0,1)$. We also know that $h^{(k-1)}$ is right continuous. It is not hard to see that $h\in \check{\cF}^k$ as well if $h^{(j)}(0+) \ge 0$, for $j=0,\ldots, k-1$. Indeed, $h^{(k-1)}(0+)\ge 0$ implies that $h^{(k-2)}$ is nondecreasing, and furthermore, it is known that $h^{(k-2)}$ is nonnegative, nondecreasing, and convex. Continuing in the same way, we know that $h^{(j)}$ is nonnegative, nondecreasing, and convex for all $j = 0,\ldots, k-2$, that is, $h\in \check{\cF}^k$ by definition. In view of this point, for $f\in \check{\cF}^k \subset \cC^k$, the shape preserving approximation by a free knot spline function $s\in \cS_{N,k}\cap \cC^k$ considered in \cite{Kopotun2003} is also a shape preserving approximation in $\check{\cF}^k$ (i.e. $s\in \cS_{N,k}\cap \check{\cF}^k$) provided that $s^{(j)}(0)\ge 0$ for all $j=0,\ldots, k-1$. By close inspection of the construction of approximating function in \cite{Kopotun2003}, this set of conditions is naturally satisfied. We leave the details of the argument in the appendix. 

The main result in \cite{Kopotun2003} states that the shape-preserving approximation by free knot splines can be as good as the free knot spline approximation regarding the number of splines used to construct the approximation function and the approximation error. In fact, 
Theorem 1.1 of \cite{Kopotun2003} presents a more general result.
In what follows, we only use their result with the order of free knot splines fixed at $k$ (i.e. approximation by piecewise polynomials of degree $k-1$) as this is the only case of interest in the current work.

\begin{proposition}[Theorem 1.1 of \cite{Kopotun2003}]
\label{kconvapprox}
 For any $1\le p \le \infty$ and any $f\in \cC^k\cap \LL_p(0,1)$, there exist constants $C_k>0$ and $C_{k,p}>0$ such that 
 \begin{align*}
     d_p(f, \cS_{C_k N,k}\cap \cC^k)\leq C_{k,p} d_p(f, \cS_{N, k}).
 \end{align*}
\end{proposition}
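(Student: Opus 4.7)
The plan is to reduce global shape-preserving spline approximation to a local shape-preserving polynomial approximation combined with a careful knot-insertion argument. Fix a near-optimal unrestricted free-knot approximant $s\in\cS_{N,k}$ to $f$, say with $\norm{f-s}_p\le d_p(f,\cS_{N,k})+\epsilon$, and let its knots $0=t_0<t_1<\cdots<t_N<t_{N+1}=1$ induce the partition $I_i=(t_{i-1},t_i)$.

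First I would classify each subinterval as \emph{good} (where $s|_{I_i}$ is already $k$-convex and is compatibly matched to its neighbors) or \emph{bad} (where the shape constraint or a junction compatibility fails). On good intervals $s$ is retained. On bad intervals $s$ is replaced by a degree-$(k-1)$ polynomial $p_i$ that is $k$-convex on $I_i$ and satisfies $\norm{f-p_i}_{\LL_p(I_i)}\lesssim \norm{f-s}_{\LL_p(I_i)}$. The existence of such a $p_i$ is a classical local shape-preserving polynomial approximation result (Shvedov-type for $k=2$, and for general $k$ obtained by differentiating $k-2$ times, applying the convex case to the convex function $f^{(k-2)}$, and integrating back): because $f$ itself is $k$-convex on $I_i$, the best $\LL_p$-polynomial approximant can be taken $k$-convex with a loss factor depending only on $k$ and $p$.

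Next comes the gluing. A naive concatenation need not lie in $\cS_{N,k}$ (smoothness up to order $k-2$ at the $t_i$ may fail), and even with that smoothness the $(k-2)$-nd derivative of the pieced function need not be globally convex. To fix this I would insert $O(1)$ auxiliary \emph{buffer} knots near each $t_i$ and construct on each short buffer interval a transition polynomial that interpolates the one-sided Taylor data up to order $k-1$ on each side while remaining $k$-convex. A Whitney-type local estimate bounds the $\LL_p$ cost of each transition by the local error already present on the adjacent intervals, so summing over the at most $N+1$ original subintervals yields a total knot count $C_k N$ and total error $C_{k,p}\,d_p(f,\cS_{N,k})$.

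The main obstacle is precisely this gluing step: showing that a constant number of buffer knots per original knot simultaneously enforces $C^{k-2}$ continuity, global convexity of the $(k-2)$-nd derivative, and a locally controlled $\LL_p$ correction cost. This requires careful bookkeeping of how shape defects propagate across junctions together with a stability estimate for shape-preserving Hermite-type interpolation on short intervals; both rely on the norm equivalence between $\LL_p$ norms of piecewise polynomials and their B-spline coefficient sequences, which is where the dependence of $C_k$ and $C_{k,p}$ on $k$ and $p$ ultimately enters.
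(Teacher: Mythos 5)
First, a point of context: the paper does not prove this proposition at all. It is quoted verbatim as Theorem~1.1 of \cite{Kopotun2003} and used as a black box; the paper's own contribution in this neighborhood is only the later observation (Lemma~\ref{approx}) that the approximant constructed in \cite{Kopotun2003} additionally preserves membership in $\check{\cF}^k$ because its derivatives at $0$ inherit the right signs. So you are attempting to reprove a deep cited theorem rather than reconstructing an argument the paper actually gives.

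Your outline does follow the general strategy of the shape-preserving free-knot spline literature (near-best unrestricted spline, good/bad interval classification, local shape-preserving polynomial replacement, gluing with buffer knots), but it has a genuine gap at exactly the point you yourself flag as ``the main obstacle'': the gluing. Concretely: (i) a single polynomial of degree $k-1$ on a buffer interval cannot in general match one-sided Taylor data up to order $k-1$ from both sides --- that is $2k$ interpolation conditions --- so the transition must itself be a multi-piece spline, and $k$-convexity of the concatenation imposes compatibility \emph{inequalities} on the junction data (already for $k=2$: the left piece's right-hand slope at the junction must not exceed the right piece's left-hand slope). When these inequalities fail, which is precisely the situation created by a ``bad'' interval, no shape-preserving transition for the given Hermite data exists, and one must go back and modify the neighboring polynomial pieces themselves; your plan does not say how, and handling this is the bulk of Kopotun's proof. (ii) The local step ``differentiate $k-2$ times, apply the convex case to $f^{(k-2)}$, integrate back'' does not automatically yield $\|f-p_i\|_{\LL_p(I_i)}\lesssim d_p(f\Ind_{I_i},\cS_{0,k})$ with a constant depending only on $k$ and $p$: the $\LL_p$ error of a near-best approximant to $f^{(k-2)}$ controls the error of its antiderivative only after picking up powers of $|I_i|$, and comparing $d_p$ of $f$ to polynomials of degree $k-1$ with $|I_i|^{k-2}$ times $d_p$ of $f^{(k-2)}$ to affine functions is a nontrivial two-sided Whitney-type estimate that must be stated and justified, not assumed. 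As written, the proposal is a credible plan whose decisive steps are asserted rather than proved, so it does not constitute a proof of the proposition.
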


On the other hand, the approximation error of free-knot splines is well-studied in approximation theory, as can be found in Chapter 12 of \cite{DeVore1993}. If the $(k-1)$-th derivative of $f$ is bounded, the right-hand side of the last display is bounded by $N^{-k}$ up to some positive constant. Moreover, the shape-preserving approximation to a $k$-monotone function is not hard to be adapted to the shape-preserving approximation to the $k$-monotone density. With the help of Lemma \ref{lemma:charac}, the free knot spline approximation of order $k$ with $N$ interior knots admits a representation as in \eqref{kmonodens}, indicating that the mixing distribution $Q$ is supported on a set of at most $N$ points.

To summarize, we obtain the following approximation result, whose proof is deferred to the appendix.

\begin{lemma}
\label{lemma:approx}
Let $g\in \cD^k$ be given by \eqref{kmonodens} such that $|g^{(k-1)}(0+)| < \infty$. Then there exists a discrete probability measure $Q_N$ with $N$ support points in $(0,1)$ such that with 
\begin{align}
\label{approximate density}
g_N(x) = \sum_{j=0}^{k-1} \beta_j\psi_{j+1}(x,1)  + \beta_k\int_0^1 \psi_k(x, \theta) d Q_N(\theta) \in \cD^k,
\end{align}
we have that $\|g-g_N\|_{\infty}\leq CN^{-k}$ for some constant $C>0$.
\end{lemma}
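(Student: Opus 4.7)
The plan is to chain together three ingredients: a reflection of $g$ to a $k$-convex function, the shape-preserving free-knot spline approximation of Proposition \ref{kconvapprox}, and the mixture characterization in Lemma \ref{lemma:charac}. Setting $\check g = g \circ \tau$, one has $\check g \in \check{\cF}^k \subset \cC^k$, and applying Proposition \ref{kconvapprox} with $p = \infty$ produces a free-knot spline $\check s \in \cS_{C_k N, k} \cap \cC^k$ satisfying
\[
\|\check g - \check s\|_\infty \le C_{k,\infty}\, d_\infty(\check g, \cS_{N, k}).
\]
As noted in the paragraph preceding this lemma and to be checked in the appendix, Kopotun's construction can be arranged so that $\check s^{(j)}(0) \ge 0$ for all $j = 0, \ldots, k-1$, which upgrades the membership to $\check s \in \check{\cF}^k$; reflecting back gives $s := \check s \circ \tau \in \cF^k$.

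Next, I would bound $d_\infty(\check g, \cS_{N, k})$ using the only regularity information available. Because $g$ is $k$-monotone, $(-1)^{k-1} g^{(k-1)}$ exists as the right derivative of the convex function $(-1)^{k-2} g^{(k-2)}$ and is nonnegative and nonincreasing on $(0,1)$, so the hypothesis $|g^{(k-1)}(0+)| < \infty$ gives $\|g^{(k-1)}\|_\infty \le |g^{(k-1)}(0+)|$ together with bounded variation of $g^{(k-1)}$; a short induction using that a nonincreasing convex function with a finite right derivative at $0+$ is bounded at $0+$ further yields $\|g\|_\infty < \infty$, a fact I will need for the normalization step. The standard free-knot spline approximation theorem from Chapter 12 of \cite{DeVore1993} then delivers $d_\infty(\check g, \cS_{N, k}) \lesssim N^{-k}$, so $\|g - s\|_\infty \lesssim N^{-k}$.

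It remains to put $s$ into the form \eqref{approximate density}. Applying Lemma \ref{lemma:charac} to $s \in \cF^k$ produces a representation
\[
s(x) = \sum_{j=0}^{k-1} \alpha_j (1-x)^j + \int_0^1 \psi_k(x, t)\, d\gamma(t)
\]
with $\alpha_j \ge 0$ and nondecreasing $\gamma$. Because $s$ is piecewise polynomial of degree $k-1$ on the subdivision induced by its at most $C_k N$ interior knots, any mass of $d\gamma$ placed in an open subinterval between two consecutive knots would produce either a $(k-1)$-th derivative discontinuity at an atom or a non-polynomial contribution from a continuous portion, contradicting the piecewise-polynomial structure of $s$; thus $d\gamma$ must be supported on at most $C_k N$ atoms located at knot positions. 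Using the identity $\psi_{j+1}(x, 1) = (j+1)(1-x)^j$ rewrites the polynomial part in the required basis, and dividing by the normalizing factor $\int s = 1 + O(N^{-k})$ produces a density $g_N \in \cD^k$ of the stated form with a discrete $Q_N$ on at most $C_k N$ points; boundedness of $g$ keeps the normalization error at $O(N^{-k})$ in sup-norm, so $\|g - g_N\|_\infty \lesssim N^{-k}$, and relabeling $C_k N \mapsto N$ absorbs the constant into the final rate. The one nontrivial technical point in the whole argument is confirming that Kopotun's shape-preserving spline lies in $\check{\cF}^k$ rather than merely in $\cC^k$ — precisely the content of the appendix material — while the rest is standard free-knot spline approximation theory combined with the spline-to-mixture dictionary furnished by Lemma \ref{lemma:charac}.
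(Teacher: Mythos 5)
Your proposal is correct and follows essentially the same route as the paper: reflect to $\check{\cF}^k$, invoke Kopotun's shape-preserving free-knot spline approximation (upgraded to $\check{\cF}^k$ via the derivative conditions at the endpoint, which is exactly the content of the paper's appendix Lemma on this point), obtain the $N^{-k}$ rate from DeVore--Lorentz using $|g^{(k-1)}(0+)|<\infty$, convert the piecewise polynomial to a discrete mixture via Lemma \ref{lemma:charac} with $\gamma$ jumping only at the knots, and normalize with an $O(N^{-k})$ sup-norm cost. The only cosmetic difference is that you inline the reflection and shape-verification steps that the paper packages into its auxiliary Lemma on spline approximation.
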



\section{Posterior Contraction Rates}
\label{sec:rate}

Let $\bX_n=(X_1,\ldots, X_n)$ be independent and identically distributed (i.i.d.) samples from a $k$-monotone density $g$ given by the representation \eqref{kmonodens} for a known $k=1,2,\ldots$. To place a prior on $g$, it is natural to consider independent priors for the coefficient vector $\bm{\beta}=(\beta_0,\ldots,\beta_k)$ and the mixing distribution $Q$. 

We put a Dirichlet distribution prior on $\bm{\beta}$ with parameters $0< a_j <\infty$, for all $j=0,1,\ldots, k$.
Independently of $\bm{\beta}$, we assign either a Dirichlet process (DP) prior or a Finite Mixtures (FM) prior on $Q$: 
\begin{itemize}
\item[{DP}:]  $Q\sim \mathrm{DP}_{a H}$, where $a>0$ is the precision parameter and $H$ is the center measure supported on $(0,1)$; see \cite{ghosal2017fundamentals} for definitions; 
\item[{FM}:] $Q=\sum_{j=1}^J w_j \delta_{\theta_j}$, with  
$\theta_1,\ldots,\theta_{J}|J \overset{i.i.d.}{\sim} H$, and $(w_1,\ldots,w_J)|J\sim \mathrm{Dir}
(J;\omega_{1J},\ldots,\omega_{JJ}),$ independently. $J$ is given a prior $\Pi(J) = (n^c-1) n^{-cJ}$ on the set of positive integers. 
\end{itemize}
In the above priors, $a$, $H$, $(\omega_{jJ}: 1\le j\le J<\infty)$ and $c$ are hyperparameters. We assume that 
\begin{itemize}
    \item[(C1)] $H$ admits a Lebesgue density $p_H$ on $(0,1)$ such that in a small neighborhood of zero, $p_H(\theta) \lesssim \theta^{t_1}$ for some $t_1> 0$;
    \item[(C2)] for any interval $(u,v)\subset (0,1)$ and some $t_2>0$, $H((u,v))\gtrsim (v-u)^{t_2}$. 
\end{itemize}

If $g\in \cD^k$ for $k\ge 2$, it is assumed that $g$ is differentiable only up to order $k-2$. However, $(-1)^{k-2}g^{(k-2)}$ is convex and non-increasing on $(0,1)$. Hence, we can define $g^{(k-1)}$ uniquely almost everywhere as either the left or right derivative of $g^{(k-2)}$, which are equal except possibly on an at most countable set.

\begin{theorem}[Contraction rate for Dirichlet process mixture prior]
\label{contraction}
Let the data $\bX_n$ be generated from a $k$-monotone density $g_0$ on $(0,1)$ given by 
\begin{align}
\label{true density}
    g_0(x) = \sum_{j=0}^{k-1} \beta_{0,j}\psi_{j+1}(x,1) + \beta_{0,k}\int \psi_k(x, \theta) d Q_0(\theta),
\end{align}
where $k$ is known. We assume $g_0^{(k-1)}(0+)<\infty$ and $\beta_{0,0} > 0$. Let $\bm{\beta}$ be given a Dirichlet prior with positive constant parameters, $a_0,\ldots,a_k$, and independently, 
put a Dirichlet process prior on $Q$ satisfying Conditions (C1) and (C2). Then the posterior distribution of $g$ contracts at the rate $\epsilon_n=(n/\log n)^{-k/(2k+1)}$ at $g_0$ with respect to the Hellinger distance, i.e., $\E_0 [\Pi(d_H(g,g_0) \ge M_n \epsilon_n |\bX_n)] \to 0$ for any $M_n\to \infty$.
\end{theorem}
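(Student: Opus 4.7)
The plan is to apply the general theory of posterior contraction from \cite{Ghosal2000} (cf.\ Theorem 8.9 of \cite{ghosal2017fundamentals}) by verifying, at the target rate $\epsilon_n=(n/\log n)^{-k/(2k+1)}$, the three standard ingredients: (i) a lower bound on the prior mass of a Kullback--Leibler neighborhood of $g_0$; (ii) a sieve $\cG_n$ whose prior complement is exponentially small; (iii) a Hellinger metric entropy bound on $\cG_n$. The assumption $\beta_{0,0}>0$ is crucial because it gives $g_0\ge \beta_{0,0}>0$ on $(0,1)$, which together with the Dirichlet prior on $\bm\beta$ (whose density is strictly positive near $\bm\beta_0$) allows uniform $L_1$ or $L_\infty$ approximations to be upgraded to KL-proximity via the standard inequality $K(p,q)\vee V(p,q)\lesssim \|p-q\|_\infty^2/\inf q$.

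For the prior concentration step, choose $N_n\asymp (n/\log n)^{1/(2k+1)}$ so that $N_n^{-k}\asymp \epsilon_n$. Lemma \ref{lemma:approx} applied to $g_0$ produces a discrete measure $Q_{N_n}=\sum_{j=1}^{N_n}w_j^*\delta_{\theta_j^*}$ and an associated density $g_{N_n}\in\cD^k$ with $\|g_0-g_{N_n}\|_\infty\lesssim \epsilon_n$. Cover the $\theta_j^*$'s by disjoint intervals $U_j\subset(0,1)$ of length $\epsilon_n/N_n$ bounded away from $0$, and consider the neighborhood of mixtures $g_{\bm\beta,Q}$ with $\|\bm\beta-\bm\beta_0\|_1\lesssim \epsilon_n$ and $\sum_j|Q(U_j)-w_j^*|\lesssim \epsilon_n$. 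Lipschitz continuity of $\theta\mapsto \psi_k(\cdot,\theta)$ on intervals bounded away from $0$ (in $L_1$) yields $\|g_{N_n}-g_{\bm\beta,Q}\|_1\lesssim \epsilon_n$, hence $K(g_0,g_{\bm\beta,Q})\vee V(g_0,g_{\bm\beta,Q})\lesssim \epsilon_n^2$. Classical small-ball estimates for the Dirichlet process (e.g.\ Lemma~G.13 of \cite{ghosal2017fundamentals}) combined with Conditions (C1)--(C2) bound the $\mathrm{DP}_{aH}$ probability of this neighborhood below by $\exp(-c_1 N_n\log N_n)=\exp(-c_2 n\epsilon_n^2)$, since $N_n\log N_n\asymp n\epsilon_n^2$; the Dirichlet prior on $\bm\beta$ contributes an $O(1)$ positive factor.

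For the sieve, take $J_n=\lfloor C_1 n\epsilon_n^2/\log n\rfloor$ for a large $C_1$ and let $\cG_n$ consist of mixtures $g_{\bm\beta,Q}$ where $Q$ is supported on at most $J_n$ points lying in $[a_n,1]$ for a suitably small $a_n$. Standard stick-breaking tail bounds plus Condition~(C1) show that $\mathrm{DP}_{aH}(\cG_n^c)\le \exp(-(c_2+4)n\epsilon_n^2)$. For the entropy, use $d_H^2(g_{\bm\beta,Q},g_{\bm\beta',Q'})\le \|g_{\bm\beta,Q}-g_{\bm\beta',Q'}\|_1\lesssim \|\bm\beta-\bm\beta'\|_1 + \|Q-Q'\|_{\mathrm{TV}}$, then cover the weight simplex and the location set at scale a power of $\epsilon_n$; this produces $\log \cN(\epsilon_n,\cG_n,d_H)\lesssim J_n\log(1/\epsilon_n)\asymp n\epsilon_n^2$. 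Plugging (i)--(iii) into the general theorem yields the desired contraction.

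The principal obstacle, in my view, lies in the prior concentration step, specifically in managing the singularity of $\psi_k(x,\theta)$ as $\theta\downarrow 0$: both $\psi_k(0,\theta)=k/\theta$ blows up and the support $[0,\theta]$ degenerates, so small perturbations of $Q$ near zero can create disproportionately large changes in the mixture density. Two devices handle this: first, Condition (C1) forces the base measure $H$ to put very little mass near $0$, shrinking the effective range of $\theta$ that the DP samples; second, the support points $\theta_j^*$ furnished by Lemma \ref{lemma:approx} can be arranged to stay bounded away from $0$ at a controlled rate, so the partition intervals $U_j$ lie in a region where $\theta\mapsto \psi_k(\cdot,\theta)$ is well-behaved. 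Carrying out these quantitative estimates and matching the exponents so that $N_n\log N_n\asymp n\epsilon_n^2$ is the bulk of the work; once resolved, the rest of the argument is routine.
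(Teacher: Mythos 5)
Your overall strategy --- the Ghosal--Ghosh--van der Vaart machinery, Lemma \ref{lemma:approx} to produce an $N_n$-point approximant with $N_n^{-k}\asymp\epsilon_n$, Lipschitz control of $\theta\mapsto\psi_k(\cdot,\theta)$ away from $0$, and Lemma~G.13 of \cite{ghosal2017fundamentals} for the Dirichlet small-ball estimate --- matches the paper's treatment of the prior-concentration step. But there are two concrete gaps.

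First, a scale mismatch in the Kullback--Leibler step. You place the neighborhood at $L_1$-scale $\epsilon_n$ (requiring $\sum_j|Q(U_j)-w_j^*|\lesssim\epsilon_n$ and $\|\bm{\beta}-\bm{\beta}_0\|_1\lesssim\epsilon_n$) and conclude $K\vee V\lesssim\epsilon_n^2$. From $\|g_0-g\|_1\lesssim\epsilon_n$ and densities bounded above and below one only gets $d_H^2(g_0,g)\le\|g_0-g\|_1\lesssim\epsilon_n$, hence $K\vee V\lesssim\epsilon_n$, which is off by a square. Your alternative inequality in terms of $\|p-q\|_\infty^2$ does not apply either, because the perturbation of the mixing weights is controlled only in $L_1$; upgrading it to $L_\infty$ costs a factor $(\min_j\theta_j^*)^{-1}$. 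The paper works at scale $\epsilon_n^2$ throughout: the intervals $I_l$ have length $\epsilon_n^4$, the weight condition is $\sum_l|Q(I_l)-w_l^*|\le\epsilon_n^2$, and only then do $d_H(g,g^{\ast})\lesssim\epsilon_n$ and $K\vee V\lesssim\epsilon_n^2$ follow. The repair is harmless for the small-ball exponent (it merely doubles the $\log(1/\epsilon_n)$ factor), but as written the step fails.

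Second, your sieve has prior probability zero. A Dirichlet process realization is almost surely supported on countably infinitely many points, so the event ``$Q$ is supported on at most $J_n$ points in $[a_n,1]$'' is a DP-null set, $\Pi(\mathcal{G}_n)=0$, and no stick-breaking tail bound can make $\Pi(\mathcal{G}_n^c)$ exponentially small. The standard repair --- which the paper uses for the DP part of Theorem~\ref{thm:adp} --- is to define the sieve through the stick-breaking representation as $\{\sum_{l}w_l\psi_k(\cdot,\theta_l):\sum_{l>J_n}w_l<\epsilon_n^2,\ \theta_1,\ldots,\theta_{J_n}\in[a_n,1]\}$ and to absorb the residual tail $\sum_{l>J_n}w_l\psi_k(\cdot,\theta_l)$ as an extra $O(\epsilon_n^2)$ term in the $L_1$ covering argument. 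Note also that for this particular theorem the paper takes a different and simpler sieve, $\cD^k_n=\{g\in\cD^k:g(0+)\le M_n\}$ with $M_n$ exponentially large: its complement is bounded via $\psi_k(x,\theta)\le k/\theta$, Markov's inequality and (C1), and its entropy is controlled by the Gao--Wellner-type bracketing bound for $k$-monotone classes (Lemma~\ref{entropy}) after splitting $(0,1)$ at $\epsilon_n$. Your parametric-covering route is viable once the sieve is repaired as above, but it is not the route the paper takes here.
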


The same posterior contraction rate can be obtained by using a finite mixture prior on the mixing distribution, as presented in the following theorem. 

\begin{theorem}[Contraction rate for finite mixture prior]
\label{thm:fmix}
Let the data $\bX_n$ be generated from a $k$-monotone density $g_0$ on $(0,1)$  given by \eqref{true density} with a known $k$, satisfying $g_0^{(k-1)}(0+)<\infty$ and $\beta_{0,0} > 0$. Let $\bm{\beta}$ be given the Dirichlet prior with positive constant parameters $a_0,\ldots,a_k$, and independently, put a finite mixture prior for $Q$ satisfying Conditions (C1) and (C2), with $c>0$ chosen sufficiently large. Then the posterior distribution of $g$ contracts at $g_0$ at the rate $\epsilon_n=(n/\log n)^{-k/(2k+1)}$ with respect to the Hellinger distance.
\end{theorem}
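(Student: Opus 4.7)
My plan is to verify the three conditions of the general posterior contraction rate theorem of \cite{Ghosal2000}: (i) a lower bound on the prior mass of a Kullback--Leibler-type neighborhood of $g_0$, (ii) an upper bound on the Hellinger metric entropy of a sieve $\mathcal{G}_n$, and (iii) an upper bound on the prior mass of $\mathcal{G}_n^c$. Most calculations should parallel those behind Theorem~\ref{contraction}; the substantive change is how the random number of components $J$ enters. Throughout I set $\epsilon_n=(n/\log n)^{-k/(2k+1)}$ and $N_n=\lceil \epsilon_n^{-1/k}\rceil\asymp (n/\log n)^{1/(2k+1)}$, with the key identity $N_n\log n\asymp n\epsilon_n^2$.

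For the prior mass condition, I would first invoke Lemma~\ref{lemma:approx} to get an approximating density $g_{0,N_n}\in \cD^k$ with an $N_n$-atom mixing measure satisfying $\|g_0-g_{0,N_n}\|_\infty\lesssim N_n^{-k}\lesssim \epsilon_n$. Since $\beta_{0,0}>0$, both $g_0$ and $g_{0,N_n}$ are bounded below by a positive constant, so this sup-norm bound converts to $K(g_0,g_{0,N_n})\vee V(g_0,g_{0,N_n})\lesssim \epsilon_n^2$. I then condition on the event $\{J=N_n\}$, which has prior mass $\asymp n^{-cN_n}$ and hence log-probability of the desired order $-n\epsilon_n^2$. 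Within that event, I estimate the probability that the random atoms and weights land within $\epsilon_n$ of those of the mixing measure of $g_{0,N_n}$: Condition~(C2) gives $\gtrsim \epsilon_n^{N_n t_2}$ for the atoms, the Dirichlet small-ball inequality gives a matching bound on the weights, and the Dirichlet prior on $\bm{\beta}$ contributes a positive constant. A Lipschitz-in-$\theta$ estimate for $\psi_k(x,\theta)$ transfers these perturbations to the density level, giving $\Pi(K(g_0,g)\le \epsilon_n^2,\ V(g_0,g)\le \epsilon_n^2)\gtrsim \exp(-Cn\epsilon_n^2)$.

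For the sieve I would take $\mathcal{G}_n=\{g : J\le J_n,\ \beta_0\ge b_n\}$ with $J_n$ a sufficiently large multiple of $n\epsilon_n^2/\log n\asymp N_n$ and $b_n$ a small polynomial in $\epsilon_n$. The geometric tail of the prior on $J$ gives $\Pi(J>J_n)\lesssim n^{-cJ_n}$, which beats $\exp(-(C+4)n\epsilon_n^2)$ once $c$ is chosen sufficiently large; this is precisely where the hypothesis on $c$ enters. The Dirichlet prior on $\bm{\beta}$ similarly makes $\Pi(\beta_0<b_n)$ negligible. On $\mathcal{G}_n$ the densities are bounded below by $b_n$, so the Hellinger, $L_1$, and KL distances are comparable up to polynomial factors in $1/b_n$; a standard $\epsilon_n$-covering of $\Delta_{k+1}\times \Delta_{J_n}\times (0,1)^{J_n}$ produced by combining coverings of each factor then gives $\log \cN(\epsilon_n,\mathcal{G}_n,d_H)\lesssim J_n\log(1/\epsilon_n)\lesssim n\epsilon_n^2$.

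The main obstacle will be the careful coordination of the parameters $N_n$, $J_n$, $b_n$ and the hyperparameter $c$: $c$ must be large enough that $n^{-cJ_n}$ dominates $\exp(-(C+4)n\epsilon_n^2)$, while $J_n$ must be large enough to absorb the effective posterior support and small enough to keep the entropy at $n\epsilon_n^2$, and $b_n$ must balance prior retention against the entropy cost of densities that can be nearly zero. Once these choices are harmonized, the remaining bookkeeping closely mirrors the Dirichlet process analysis of Theorem~\ref{contraction}, with the inherent discreteness of the finite mixture actually simplifying the covering step.
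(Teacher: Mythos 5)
Your prior-concentration argument is essentially the paper's: approximate $g_0$ by an $N_n$-atom mixture via Lemma~\ref{lemma:approx}, condition on $\{J=N_n\}$ (prior cost $e^{-cN_n\log n}\asymp e^{-c'n\epsilon_n^2}$), and use (C2) plus the Dirichlet small-ball bound for the atoms and weights. That part is sound, up to one detail the paper handles explicitly: the Lipschitz constant of $\theta\mapsto\psi_k(\cdot,\theta)$ in $\LL_1$ is of order $1/\theta$, so the approximating atoms must first be pushed up to $\theta_l^{\ast}\ge\epsilon_n^2$ (at negligible $\LL_2$ cost) before "a Lipschitz-in-$\theta$ estimate transfers these perturbations to the density level."

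The genuine gap is in your sieve and entropy step. You take $\mathcal{G}_n=\{J\le J_n,\ \beta_0\ge b_n\}$ and claim that a Euclidean covering of $\Delta_{k+1}\times\Delta_{J_n}\times(0,1)^{J_n}$ yields an $\epsilon_n$-Hellinger covering with entropy $\lesssim J_n\log(1/\epsilon_n)$. This fails because the map $\theta\mapsto\psi_k(\cdot,\theta)$ is not uniformly continuous near $\theta=0$: by Lemma~\ref{lem:Psi}, $\|\psi_k(\cdot,\theta)-\psi_k(\cdot,\theta')\|_1\le 2(1-\theta/\theta')$ for $\theta<\theta'$, and this is essentially sharp, so two atoms at $\theta=\delta$ and $\theta'=2\delta$ are at $\LL_1$-distance of order one however small $\delta$ is. To make the parametric covering work you must excise $\{\theta_l<\delta_n\}$ from the sieve; but under (C1) alone, $H((0,\delta_n))\lesssim\delta_n^{1+t_1}$ is only polynomially small, so forcing $\Pi(\exists l:\theta_l<\delta_n)\le e^{-Cn\epsilon_n^2}$ requires $\delta_n$ exponentially small, and then the $\theta$-part of the entropy is $J_n\log(1/\delta_n)\asymp J_n\cdot n\epsilon_n^2\gg n\epsilon_n^2$. (This is exactly why Theorem~\ref{thm:finite} assumes the much stronger tail $p_H(\theta)\lesssim e^{-t_3/\theta}$ near zero before using the parametric covering.) The paper's proof of Theorem~\ref{thm:fmix} avoids this entirely: it keeps the nonparametric sieve $\{g:g(0+)\le M_n\}$ with $M_n=\exp\{Cn^{1/(2k+1)}(\log n)^{2k/(2k+1)}\}$, bounds its entropy by the shape-based bracketing bound of Lemma~\ref{entropy} (which depends only on $\log M_n$, hence tolerates an exponentially large height cut-off), and bounds the residual prior by Markov's inequality, $\Pi(g(0+)>M_n)\lesssim M_n^{-t_1}$, using $\E Q=H$ and (C1). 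You should replace your sieve by this one; the $\beta_0\ge b_n$ restriction is also unnecessary, since Hellinger covering only needs $d_H^2\le\|g_1-g_2\|_1$, which requires no lower bound on the densities.
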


\begin{remark}\rm 
\label{rem:fixed prior}
In Theorem \ref{thm:fmix}, the same posterior contraction rate can be derived if the prior on $J$ is replaced by a fixed prior that satisfies the condition,
$e^{-b_1 j \log j}\le \Pi(J=j)\le e^{-b_2 j \log j}$. For instance, a Poisson prior truncated at $0$ satisfies the required tail condition. 
\end{remark}

The posterior contraction rate is substantially improved to a nearly parametric rate using the same prior if the mixing distribution $Q_0$ is finitely supported on $J_0$ points, i.e., 
\begin{align}
    g_0(x) = \sum_{j=0}^{k-1}\beta_{0,j}\psi_{j+1}(x,1) + \beta_{0,k}\sum_{l=1}^{J_0} w_l^0 \psi_k(x,\theta_j^0). \label{dens.fmix}
\end{align}

In the result below, both $k$ and $J_0$ are allowed to depend on $n$ (and hence the resulting rate involves $k$ and $J_0$), provided that $\max(\log k,\log J_0)\lesssim \log n$. 

\begin{theorem}[Finitely supported mixing]
\label{thm:finite}
    Let the true density $g_0$ as given in \eqref{dens.fmix} with $\beta_{0,0}>0$. Let $\bm{\beta}$ be given the Dirichlet prior with positive constant parameters $a_0,\ldots,a_k$, and independently, let 
    $Q$ be given a finite mixture prior with $c>0$ chosen sufficiently large and $H$ satisfying the conditions that $H((u,v)) \gtrsim (v-u)^{t_2}$ and $p_{H} \lesssim \exp\{-t_3/\theta\}$ 
    for any interval $(u,v)\subset(n^{-2}, 1)$, and $\theta\in (0, n^{-2})$, where $t_2,t_3>0$ are constants. 
    Then the posterior of $g$ contracts at $g_0$ at the rate $\epsilon_n = \sqrt{\max(k, J_0) (\log n )/n}$ with respect to the Hellinger metric.
\end{theorem}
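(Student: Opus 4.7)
The plan is to invoke the general posterior-contraction theorem of \cite{Ghosal2000}, which requires (i) sufficient prior mass on a Kullback--Leibler neighborhood of $g_0$, (ii) a sieve $\cF_n \subset \cD^k$ whose complement carries exponentially small prior mass, and (iii) a Hellinger-entropy bound on $\cF_n$. Throughout, the assumption $\beta_{0,0}>0$ is crucial: it forces $g_0(x)\ge\beta_{0,0}$ uniformly on $(0,1)$ and, after restricting to priors with $\beta_0$ close to $\beta_{0,0}$, yields the same uniform lower bound for candidate $g$. This is what upgrades the Hellinger rate to a KL rate at a mere $\log$-type price, which is essential for hitting the nearly parametric rate.

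For the KL prior-mass bound, I would restrict to the event $\{J=J_0\}$, which under the FM prior has mass $\asymp n^{-cJ_0}$. Conditional on $J=J_0$, I would build a coordinate-wise neighborhood of the true parameters: $|\beta_j-\beta_{0,j}|\lesssim\epsilon_n^2$, $|w_l-w_l^0|\lesssim\epsilon_n^2$, and, after matching labels up to the $J_0!$ permutation, $|\theta_l-\theta_l^0|\lesssim\epsilon_n^2/k$. A direct computation of $\partial_\theta\psi_k(\cdot,\theta)$ shows that $\psi_k$ is $(k/\theta)$-Lipschitz in $\theta$ in the $\LL_1$-norm, so on the neighborhood these perturbations yield $\|g-g_0\|_1\lesssim\epsilon_n^2$; combined with the uniform lower bound, this gives $K(g_0,g),V(g_0,g)\lesssim\epsilon_n^2\log(1/\epsilon_n)$. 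The Dirichlet priors on $\bm{\beta}$ and $\bm{w}$ contribute log-masses of order $k\log(1/\epsilon_n)$ and $J_0\log(1/\epsilon_n)$ respectively, and the lower bound on $H$ from condition (C2) contributes $O(t_2J_0\log(k/\epsilon_n^2))$. Absorbing the $J_0!$ permutation factor, the prior mass on $\{J=J_0\}$, and using $\max(\log k,\log J_0)\lesssim\log n$, the total $-\log$-mass is of order $(k+J_0)\log n\asymp n\epsilon_n^2$.

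For the sieve, I would take $\cF_n=\{g:\;J\le J_n,\;\theta_l\in[n^{-2},1]\ \forall l\}$ with $J_n=C_3\max(k,J_0)$ for a constant $C_3$ large enough relative to $c$. The geometric tail of the prior on $J$ gives $\Pi(J>J_n)\lesssim n^{-cJ_n}\le e^{-Dn\epsilon_n^2}$ whenever $cC_3>D$, which is arranged by choosing $c$ large (as the hypothesis allows). The condition $p_H(\theta)\lesssim e^{-t_3/\theta}$ on $(0,n^{-2})$ forces $H((0,n^{-2}))$ to decay faster than any polynomial in $n$, so a union bound over the at most $J_n$ atoms makes the probability of falling below the cutoff negligible. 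For the entropy, $\cF_n$ is essentially a finite-dimensional parametric family with at most $k+2J_n+1$ parameters once $J$ is fixed; using the $(k/\theta)$-Lipschitz property of $\psi_k$ in $\theta$ combined with the cutoff $\theta_l\ge n^{-2}$, together with Lipschitzness in $\bm{\beta}$ and $\bm{w}$, yields $\log\cN(\epsilon_n,\cF_n,d_H)\lesssim(k+J_n)\log(k n/\epsilon_n)\lesssim\max(k,J_0)\log n\asymp n\epsilon_n^2$, with the sum over $J\le J_n$ absorbed into constants.

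The main obstacle will be bookkeeping: because the theorem allows both $k$ and $J_0$ to grow up to the scale of $\log n$, every quantity that depends on $k$ (notably the $\LL_1$-Lipschitz constant $k/\theta$ of $\theta\mapsto\psi_k(\cdot,\theta)$ and the pointwise size of $\psi_k$ near $\theta=0$) must be tracked carefully and shown to contribute at most $O(\log n)$ once logged. The sieve's cutoff $\theta_l\ge n^{-2}$, made possible by the exponential-type tail of $p_H$ near zero, is exactly the device that keeps the Lipschitz and entropy bounds polynomial in $n$. Once this accounting is done, the three conditions of \cite{Ghosal2000} are verified at the rate $\epsilon_n=\sqrt{\max(k,J_0)(\log n)/n}$ and the theorem follows.
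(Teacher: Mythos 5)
Your proposal follows essentially the same route as the paper's proof: restrict to $\{J=J_0\}$ for the Kullback--Leibler prior-mass bound, use the $\LL_1$-Lipschitz continuity of $\theta\mapsto\psi_k(\cdot,\theta)$ together with the cutoff $\theta_l\ge n^{-2}$ for the sieve entropy, and exploit the exponential tail of $p_H$ near zero plus the geometric prior on $J$ for the residual prior mass. Two minor quantitative remarks that do not affect the argument: Lemma~\ref{lem:Psi} gives the sharper bound $\|\psi_k(\cdot,\theta)-\psi_k(\cdot,\theta')\|_1\le 2|\theta-\theta'|/\max\{\theta,\theta'\}$ (no factor of $k$), and since $\beta_{0,0}>0$ keeps both $g_0$ and the candidate $g$ bounded away from zero, one obtains $K(g_0,g),V(g_0,g)\lesssim\epsilon_n^2$ directly, without the extra $\log(1/\epsilon_n)$ factor you carry.
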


It can be seen from the proof that the fixed prior in Remark~\ref{rem:fixed prior} also obtains the same rate if $\log J_0 \asymp \log n$.  

\section{Adaptation to \textit{k}}
\label{sec:adaptive}

In the last section, we studied posterior contraction rates assuming that the order of monotonicity $k$ is known. Here, the parameter $k$ serves as a regularity index controlling the complexity of the model, much like a smoothness index. Adapting the rates to different values of $k$ is therefore a highly desirable objective. In the Bayesian framework, a natural approach is to treat $k$ as a model index parameter and put a prior distribution on it. Consequently, the resulting prior becomes a mixture of the priors used for a fixed index value. Under similar situations in smoothness or sparsity settings, the corresponding posterior distribution often adapts to the optimal rate under fairly mild conditions; see \cite{ghosal2008nonparametric,ghosal2017fundamentals,castillo2015bayesian}, among others. In this section, we show that such an automatic adaptation strategy works in the $k$-monotone setting as well. This feature is particularly attractive by employing the Bayesian approach, while no parallel result is known in the non-Bayesian literature for the family of $k$-monotone densities indexed by $k$.
Noting the models $\cD^k$ are nested in the following way: $\cD^{k+1}\subset \cD^k$, we define the true value $k_0$ as the largest value of $k$ such that $g_0 \in \cD^k$. We assume $k_0$ is finite, which is the case of interest. It is not hard to see that the finiteness of $k_0$ implies $\beta_{k_0} > 0$ in the characterization \eqref{kmonodens}. Otherwise, this $k_0$-monotone density would be a polynomial of degree at most $k_0-1$, which would correspond to the case $k_0 = \infty$, contradicting the finiteness of $k_0$.

Let $k$ be given a prior $\Pi$ that is one of the two types: 
\begin{itemize}
    \item [(K1)]
    $e^{ - d_1 k\log k} \le \Pi(k) \le e^{ - d_2 k\log k}$ for some $d_1\ge d_2>0$; 
    \item [(K2)]  $\Pi(k) = (n^r-1) n^{-rk}$, for some $r>0$.
\end{itemize}

\begin{theorem}[Adaptive contraction rate]
\label{thm:adp}
    Let the monotonicity index $k$ be unknown and endowed with a prior satisfying the conditions (K1)  or (K2). Given $k$, let the prior for $\bm{\beta}$ be  $\mathrm{Dir}(a_0,\ldots,a_k)$ for some $a_0,\ldots,a_k$ lying between two fixed positive numbers, and independently, let $Q$ be given either the Dirichlet process prior or the finite mixture prior with a sufficiently large $c>0$, satisfying Conditions (C1) and (C2).  Let the true density be $g_0$ be given by \eqref{true density} with $k=k_0$ satisfying $|g_0^{(k_0-1)}(0+)| < \infty$ and $\beta_{0,0}>0$. 
    Then the posterior distribution contracts at $g_0$ at the rate 
    $\epsilon_n = (n/\log n)^{-k_0/(2k_0 + 1)}$ with respect to the Hellinger metric. 
\end{theorem}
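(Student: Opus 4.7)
The plan is to cast the hierarchical prior $\Pi = \sum_k \Pi(k)\,\Pi^{(k)}$ as a single prior on $\bigcup_k \cD^k$ and to verify the three standard conditions of the general posterior contraction theorem of \cite{Ghosal2000,ghosal2017fundamentals} at the target rate $\epsilon_n = (n/\log n)^{-k_0/(2k_0+1)}$: (a) sufficient prior mass on a Kullback--Leibler neighborhood of $g_0$; (b) a sieve with Hellinger metric entropy bounded by a multiple of $n\epsilon_n^2$; and (c) negligible prior mass on the sieve complement. All three conditions will reduce, with only bookkeeping, to the fixed-$k$ analysis already carried out in the proofs of Theorems~\ref{contraction} and \ref{thm:fmix}.

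For (a), I would restrict attention to the $k = k_0$ slice of the prior. The prior-concentration estimate already established with $k_0$ known gives
\begin{align*}
\Pi\{g:K(g_0,g)\vee V(g_0,g)\leq \epsilon_n^2\} \geq \Pi(k_0)\exp(-c\, n\epsilon_n^2),
\end{align*}
where the hypotheses $|g_0^{(k_0-1)}(0+)|<\infty$ and $\beta_{0,0}>0$ are precisely what enable the fixed-$k_0$ approximation by Lemma~\ref{lemma:approx} and the control of the Kullback--Leibler divergence. For both priors (K1) and (K2) on $k$, $\log(1/\Pi(k_0)) = O(\log n) = o(n\epsilon_n^2)$ with $k_0$ fixed, so the factor $\Pi(k_0)$ is absorbed into the exponent. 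For (b) and (c), choose $K_n$ such that $\sum_{k>K_n}\Pi(k) \leq \exp(-(c+2)n\epsilon_n^2)$; each of (K1) and (K2) admits $K_n \lesssim n\epsilon_n^2/\log n$. Set $\cF_n = \bigcup_{k=1}^{K_n}\cF_n^{(k)}$, where each $\cF_n^{(k)}$ is the fixed-$k$ sieve from the earlier proofs, sized via the mixing dimension $J_n \asymp n\epsilon_n^2/\log n$ to the target $k_0$ rate rather than to the $k$-specific rate. A union bound then yields
\begin{align*}
\log\cN(\epsilon_n,\cF_n,d_H)\leq \log K_n + \max_{k\leq K_n}\log\cN(\epsilon_n,\cF_n^{(k)},d_H)\lesssim n\epsilon_n^2,
\end{align*}
while $\Pi(\cF_n^c)\leq \sum_{k\leq K_n}\Pi(k)\Pi^{(k)}(\cF_n^{(k),c}) + \sum_{k>K_n}\Pi(k)$ is exponentially small by combining the fixed-$k$ complement estimates with the tail bound on $\Pi(k)$.

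The principal technical obstacle is verifying that the fixed-$k$ sieve and entropy estimates from the proofs of Theorems~\ref{contraction} and \ref{thm:fmix} transfer uniformly across $1\leq k\leq K_n$. The implicit constants in Lemma~\ref{lemma:approx} and in the bracketing and metric entropy bounds depend on $k$, and these dependencies must be at most polynomial in $k$ so as to be absorbed into the $\log K_n\lesssim \log n$ factor; this is plausible because the $\psi_k$ kernels and their supremum norms grow only polynomially in $k$. A secondary point concerns the slices $k\neq k_0$: slices with $k<k_0$ satisfy $g_0\in\cD^k$ with a fixed-$k$ minimax rate slower than $\epsilon_n$, but this is immaterial because adaptation proceeds through the prior mass delivered by the $k_0$ slice in (a), while these slices need only contribute to (b) and (c) through a sieve tuned to $\epsilon_n$; slices with $k>k_0$ do not contain $g_0$ at all, and their contribution is rendered negligible through the tail of $\Pi(k)$ together with the fact that $\inf_{g\in\cD^{k_0+1}}d_H(g_0,g)>0$. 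Once these two bookkeeping steps are settled, the adaptive conclusion follows directly from the general theorem.
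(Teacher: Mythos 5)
Your condition (a) is handled exactly as in the paper: restrict to the slice $k=k_0$, invoke the fixed-$k_0$ prior-mass estimate, and absorb $-\log\Pi(k=k_0)\lesssim k_0\log k_0$ (or $k_0\log n$) into $n\epsilon_n^2$; the choice $K_n\asymp n\epsilon_n^2/\log n$ for the tail of $k$ also matches. The gap is in condition (b). You define $\cF_n^{(k)}$ to be ``the fixed-$k$ sieve from the earlier proofs,'' but the sieves in Theorems~\ref{contraction} and \ref{thm:fmix} are of the form $\{g\in\cD^k: g(0+)\le M_n\}$, and their Hellinger entropy is controlled through the $k$-monotone bracketing bound of Lemma~\ref{entropy}, whose dominant term is of order $\epsilon_n^{-1/k}$. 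With $\epsilon_n=(n/\log n)^{-k_0/(2k_0+1)}$ this equals $(n/\log n)^{k_0/(k(2k_0+1))}$, which for every $k<k_0$ strictly exceeds $n\epsilon_n^2=n^{1/(2k_0+1)}(\log n)^{2k_0/(2k_0+1)}$; already the $k=1$ slice has entropy $\asymp(n/\log n)^{k_0/(2k_0+1)}\gg n\epsilon_n^2$ whenever $k_0\ge2$. So the step ``$\max_{k\le K_n}\log\cN(\epsilon_n,\cF_n^{(k)},d_H)\lesssim n\epsilon_n^2$'' fails, and the obstacle is not the polynomial-in-$k$ constants you single out but the functional form of the entropy bound itself.

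The paper circumvents this by switching to a genuinely parametric sieve in the adaptive case. For the finite mixture prior it takes mixtures with at most $J_n\asymp(n/\log n)^{1/(2k_0+1)}$ components and $\theta_l\ge n^{-2}$, whose entropy is $\lesssim\max(k,J_n)\log n$ uniformly in $k\le k_n$ (this is the computation of Theorem~\ref{thm:finite}, not of Theorem~\ref{contraction}). For the Dirichlet process prior it uses a stick-breaking truncation $\{\sum_{j>J_n}w_j<\epsilon_n^2,\ \theta_1,\ldots,\theta_{J_n}\ge n^{-2}\}$ and bounds the residual mass $\Pi(\sum_{j>J_n}w_j\ge\epsilon_n^2)$ via the Gamma law of $-\sum_{l\le J_n}\log(1-V_l)$. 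Your phrase ``sized via the mixing dimension $J_n$'' points toward the right object for the finite-mixture prior, but the earlier DP sieve has no mixing dimension at all, so the DP half of the theorem requires this additional truncation device, which your proposal does not supply. A minor further point: your claim that the slices $k>k_0$ are handled because $\inf\{d_H(g_0,g):g\in\cD^{k_0+1}\}>0$ is neither needed nor established; the slices with $k_0<k\le K_n$ simply sit inside the sieve and are controlled by its entropy, while only $k>K_n$ is dispatched by the prior tail.
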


If the true $k$-monotone density has a finite representation as in \eqref{dens.fmix} with $k=k_0$, then it is still possible to obtain the nearly parametric posterior contraction rate stated in Theorem~\ref{thm:finite} without knowing the true value $k_0$ of $k$. As shown in Theorem~\ref{thm:finite}, this holds when both $k_0$ and $J_0$ satisfy that $\max(\log J_0,\log k_0)\lesssim \log n$. It may be noted that, even though  $g\in \cD^{\bar{k}}$ for any $\bar{k}<k_0$ as well, the corresponding mixture representation with the kernel $\psi_{\bar{k}}$ will not be supported on finitely many points in general.

\begin{theorem}[Adaptive contraction for finite mixture]
     Let the monotonicity index $k$ be unknown and endowed with a prior of the type (K2). Given $k$, let the prior for $\bm{\beta}$ be  $\mathrm{Dir}(a_0,\ldots,a_k)$ for some $a_0,\ldots,a_k$ lying between two fixed positive numbers. Independently, let $Q$ be given the finite mixture prior with a sufficiently large $c>0$. The prior distribution $H$ of a support point $\theta$ satisfies, for some $t_2,t_3>0$, that $H((u,v)) \gtrsim (v-u)^{t_2}$ for any interval $(u,v)\subset(n^{-2}, 1)$, and the corresponding density $p_{H}(\theta)  \lesssim \exp\{-t_3/\theta\}$ for all $\theta\in (0, n^{-2})$. 
    If $g_0$ is given by \eqref{dens.fmix} for some finite $J_0$ and $k=k_0$ with $\beta_{0,0}>0$, then the posterior contracts at $g_0$ at the rate $\sqrt{\max(k_0, J_0) (\log n) / n}$ with respect to the Hellinger metric.
\end{theorem}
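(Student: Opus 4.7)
The plan is to combine the adaptation device of Theorem~\ref{thm:adp} with the nearly parametric rate of Theorem~\ref{thm:finite}. Setting $\epsilon_n=\sqrt{\max(k_0,J_0)(\log n)/n}$, so that $n\epsilon_n^2=\max(k_0,J_0)\log n$, I will verify the three standard sufficient conditions of \cite{Ghosal2000}: (i) a Kullback--Leibler prior concentration bound $\Pi\{K(g_0,g)\le\epsilon_n^2,\;V(g_0,g)\le\epsilon_n^2\}\gtrsim e^{-Cn\epsilon_n^2}$; (ii) a sieve $\cF_n$ with Hellinger entropy $\log\cN(\epsilon_n,\cF_n,d_H)\lesssim n\epsilon_n^2$; and (iii) exponentially small prior mass outside the sieve. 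Crucially, the hypotheses on $H$ in the present theorem coincide exactly with those of Theorem~\ref{thm:finite}, so the core estimates there may be invoked \emph{conditional on} $k=k_0$.

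For (i), observe that $\Pi(k=k_0)=(n^r-1)n^{-rk_0}\ge e^{-rk_0\log n}$, and the proof of Theorem~\ref{thm:finite} shows that the conditional prior on $(\bm{\beta},J,Q)$ given $k=k_0$ assigns mass at least $e^{-C_1\max(k_0,J_0)\log n}$ to a KL-neighborhood of $g_0$ of radius $\epsilon_n$, using $J=J_0$, the polynomial lower bound on $H$ inside $(n^{-2},1)$, and the exponential bound $p_H(\theta)\lesssim e^{-t_3/\theta}$ near zero to accommodate any support points of $Q_0$ in $(0,n^{-2})$. Multiplying yields the required $e^{-Cn\epsilon_n^2}$ bound since $rk_0\le r\max(k_0,J_0)$. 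For (ii) and (iii), take
\begin{align*}
\cF_n=\bigcup_{1\le k\le K_n}\cF_{n,k},\qquad K_n=J_n=M\max(k_0,J_0),
\end{align*}
where $\cF_{n,k}$ is the sieve from Theorem~\ref{thm:finite} restricted to mixing distributions with at most $J_n$ atoms, and $M$ is a large constant depending on $r$ and $c$. The tail bounds $\Pi(k>K_n)\lesssim n^{-rK_n}$ and $\Pi(J>J_n\mid k)\lesssim n^{-cJ_n}$ then give $\Pi(\cF_n^c)\le e^{-(C+1)n\epsilon_n^2}$. Since each $\cF_{n,k}$ is essentially a parametric family of dimension $O(k+J_n)$, standard Lipschitz-in-parameter bracketing for $\psi_k(\cdot,\theta)$ (combined with the fact that $g\ge\beta_0\ge 0$ so densities are uniformly controlled) yields $\log\cN(\epsilon,\cF_{n,k},d_H)\lesssim(k+J_n)\log(1/\epsilon)$, whence by a union bound
\begin{align*}
\log\cN(\epsilon_n,\cF_n,d_H)\le\log K_n+(K_n+J_n)\log(1/\epsilon_n)\lesssim\max(k_0,J_0)\log n=n\epsilon_n^2.
\end{align*}

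The principal obstacle is the bookkeeping: $M$ must be large enough that both $n^{-rK_n}$ and $n^{-cJ_n}$ dominate the exponent $(C+1)n\epsilon_n^2$ produced at the concentration step, yet the product $(K_n+J_n)\log(1/\epsilon_n)$ must still be at most a constant multiple of $n\epsilon_n^2$; since all three quantities scale linearly in $\max(k_0,J_0)$ and the prefactors depend only on $r,c$ and constants from the entropy bound for $\psi_k$, a single choice of $M$ works. A subtle conceptual point is that, although $g_0\in\cD^{\bar k}$ for every $\bar k\le k_0$, its mixture representation via $\psi_{\bar k}$ generally requires infinitely many atoms, so the nearly parametric rate is not available within $\cD^{\bar k}$ for $\bar k<k_0$; however, these alternative values of $k$ enter only through the entropy and sieve-complement estimates, where they are handled uniformly, while the concentration inequality is verified only at $k=k_0$. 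Hence no finite-atom approximation of $g_0$ at $k\ne k_0$ is needed, and the argument reduces cleanly to the two theorems already established.
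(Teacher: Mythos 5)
Your proposal is correct and follows essentially the same route the paper intends (the paper leaves this proof implicit as the combination of the adaptation machinery of Theorem~\ref{thm:adp} with the estimates of Theorem~\ref{thm:finite}): prior concentration is verified only on the slice $k=k_0$ with $J=J_0$, while the sieve, its entropy, and the residual prior mass are handled uniformly over $k\le K_n$ and $J\le J_n$ with $K_n, J_n$ a large multiple of $\max(k_0,J_0)$. One small slip worth noting: the bound $p_H(\theta)\lesssim e^{-t_3/\theta}$ is an \emph{upper} bound used only to make the residual mass $H((0,n^{-2}))$ exponentially small in the sieve-complement step, not to "accommodate" support points of $Q_0$ in $(0,n^{-2})$ during the concentration step --- the fixed support points of $Q_0$ lie in $(n^{-2},1)$ for large $n$, where the polynomial lower bound on $H$ is what delivers the prior mass.
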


It can be seen from the proof that a prior of the type (K1) for $k$ and a fixed prior for $J$ as in Remark~\ref{rem:fixed prior} may also be used to derive the same rate provided that $\log k_0 \asymp \log n$ and $\log J_0\asymp \log n$.

\section{Applications to Multiple Testing}
\label{sec:Multi-testing}

In large-scale hypothesis testing, it is essential to assess the proportion of true null hypotheses when reporting scientific findings. The proportion of null hypotheses, denoted as $\alpha$, plays a crucial role in the calculation of the positive false discovery rate \cite{Storey2002}.
Consider a problem of simultaneously testing $n$ hypotheses. For each individual test, the data are summarized using a test statistic, and a $p$-value is computed based on an exact, approximate, or asymptotic null distribution of the test statistic and the scope of the alternative hypothesis. Furthermore, we assume that the test statistics corresponding to different hypotheses are (nearly) independent, resulting in (nearly) independent $p$-values.
Under a simple null hypothesis, the $p$-value is calibrated; that is, it has a uniform distribution on $[0,1]$, provided that the test statistic follows a continuous null distribution. Even when the null hypothesis is composite, certain Bayesian $p$-values (e.g., the partial posterior predictive $p$-value of \cite{bayarri2000p}) asymptotically follow a uniform distribution (cf. \cite{robins2000asymptotic}) when the data are sampled using an i.i.d. scheme. 
The $p$-values from the alternative hypotheses usually concentrate near the origin and have decreasing density on $[0,1]$. This feature, along with true null hypotheses outnumbering true alternative hypotheses in practice, is used to estimate the proportion of null hypotheses in Storey's procedure \cite{Storey2002} for controlling the positive false discovery rate (pFDR). It is easy to see that the proportion of the null hypothesis is identifiable if the $p$-value density under the alternative approaches $0$ at $1$ (Proposition~4 of \cite{Ghosal2008}). 
This assumption is not always true; however, see the discussion in Section 2.2 of \cite{Ghosal2008}. For example, in the two-sided t-test, the density of $p$-values does not vanish at $1$, in which case we can only identify an upper bound for the proportion of null hypotheses. However, if the sample size is reasonably large, the height of the density under the alternative is very small, so the condition holds approximately. 

The $p$-value density under the alternative is explicitly modeled as a monotone decreasing density ($k$-monotone for $k=1$) in \cite{Langaas2005}. This assumption is extremely mild as it can be seen to hold under the Monotone Likelihood Ratio (MLR) property of the distribution of the test statistic for both one- and two-sided alternatives (Propositions~1 and 2 of \cite{Ghosal2008}). However, simulation results demonstrate that the Grenander estimator exhibits unstable performance near $1$, which significantly affects the quality of the estimator for the positive false discovery rate (pFDR). To enhance the performance, \cite{Langaas2005} recommends using a convex nonincreasing density to fit the density of the $p$-values.
A model-based Bayesian approach to the estimation of the pFDR was adopted in \cite{TangGhosalRoy} using certain mixtures of beta densities. The corresponding distribution function under a logarithmic transformation of the argument is completely monotone (Proposition~7 of \cite{Ghosal2008}), which corresponds to $k$-monotonicity for all $k$. Results in Section~3 of \cite{Ghosal2008} show that the Bayesian procedure under a Dirichlet process prior on the mixing distribution gives consistent posterior for the proportion of null hypotheses and the pFDR. Other model-based and Bayesian approaches to the estimation of pFDR have been proposed based on modeling probit-transformed mixtures of skew-normal densities in \cite{bean2013finite} and \cite{ghosal2011predicting} and sufficient conditions for the identification of the proportion of null hypotheses are discussed in \cite{ghosal2011identifiability}. A review of Bayesian nonparametric methods for multiple testing is available in \cite{ghosal2009bayesian}. 

A very appealing condition on the $p$-value density under the alternative compromising between the generality of the class of monotone densities and the smoothness of the class of completely monotone functions is that the density of the $p$-value distribution under the alternative belongs to the class of $k$-monotone density for some $k$. For instance, the case $k=2$ corresponding to decreasing convex densities already gives a much more stable estimator of the density \cite{Langaas2005}, but it may be harder to ensure under what condition the density of $p$-values under the alternative would be decreasing and convex. The approach to modeling the density of the $p$-values under the alternative as a $k$-monotone density is irresistibly appealing if $k$ can be left unspecified and be adaptively chosen from the data using the technique developed in Section~\ref{sec:adaptive}. The following result quantifies the accuracy of the procedure.

\begin{theorem}
\label{cor:alp}
Let $U_1,\ldots,U_n$ be independent $p$-values arising from the simultaneous testing of $n$ hypotheses. We assume that the $p$-value density $g$ is modeled as $k$-monotone, where $k\ge 2$. The value of $k$ can be either known or unknown. In the latter case, a prior on $g$ is specified as described in Section~\ref{sec:rate} or \ref{sec:adaptive}. In both scenarios, $\alpha$ represents the corresponding proportion of null hypotheses. Let $g_0$ stand for the true density and let the true proportion of null hypotheses be denoted by $\alpha_{0}$. Then under the conditions of Theorems \ref{contraction}, \ref{thm:fmix} or \ref{thm:adp}, the posterior distribution of $\alpha$ is consistent at $\alpha_0$ and contracts at the rate $\epsilon_n=(n/\log n)^{-k/(2(2k+1))}$, that is, for any $M_n\to \infty$, $\Pi(|\alpha - \alpha_{0}|>M_n \epsilon_n|U_1,\ldots,U_n)\to 0$ in probability.
\end{theorem}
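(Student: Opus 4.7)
The plan is to reduce the problem to a posterior contraction statement for the single coordinate $\beta_0$---the coefficient of the uniform component $\psi_1(\cdot,1)\equiv 1$ in the $k$-monotone representation \eqref{kmonodens}. Under the standard identifiability hypothesis that the alternative density vanishes at $1$, the null proportion equals $\alpha=g(1-)$. Since $\psi_{j+1}(1-,1)=0$ for $j\ge 1$ and $\psi_k(1-,\theta)=0$ for all $\theta\in(0,1]$ when $k\ge 2$, the mixture evaluated at $1-$ collapses to $g(1-)=\beta_0$, and analogously $\alpha_0=\beta_{0,0}$. It therefore suffices to convert the Hellinger contraction of $g$ around $g_0$ supplied by Theorem \ref{contraction}, \ref{thm:fmix}, or \ref{thm:adp} into a contraction of $\beta_0$ around $\beta_{0,0}$.

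The bridge I would use is a local-average estimate of the boundary value. For any $g\in\cD^k$ of the form \eqref{kmonodens} with $k\ge 2$ and any $h\in(0,1/2]$, direct computation using $\int_{1-h}^{1}(j+1)(1-x)^{j}\,dx=h^{j+1}$ and the uniform estimate $\int_{1-h}^{1}\psi_k(x,\theta)\,dx\le(2h)^k\le 4h^2$ (the last inequality because $2h\le 1$ forces $(2h)^k\le(2h)^2$ whenever $k\ge 2$) gives
\begin{equation*}
\left|\int_{1-h}^{1} g(x)\,dx - \beta_0\, h\right|\le C h^2,
\end{equation*}
with an absolute constant $C$ independent of $k$, $\bm{\beta}$, and $Q$. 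The same inequality holds with $g_0,\beta_{0,0}$ in place of $g,\beta_0$.

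Combining the two estimates through the triangle inequality, together with the standard Cauchy--Schwarz consequence $\|g-g_0\|_1\le 2d_H(g,g_0)$, delivers
\begin{equation*}
|\alpha-\alpha_0|\le 2Ch + \frac{1}{h}\int_{1-h}^{1}|g-g_0|\,dx\le 2Ch + \frac{2\,d_H(g,g_0)}{h}.
\end{equation*}
Balancing via $h=\sqrt{d_H(g,g_0)}$ yields $|\alpha-\alpha_0|\lesssim \sqrt{d_H(g,g_0)}$. On the event $\{d_H(g,g_0)\le M_n(n/\log n)^{-k/(2k+1)}\}$, whose posterior probability tends to $1$ in $P_{g_0}$-probability by Theorems \ref{contraction}, \ref{thm:fmix}, or \ref{thm:adp}, this becomes $|\alpha-\alpha_0|\lesssim \sqrt{M_n}\,(n/\log n)^{-k/(2(2k+1))}$; absorbing the slowly growing $\sqrt{M_n}$ gives the claimed rate, and consistency is immediate. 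In the adaptive setting, the same argument applies with $k$ replaced by the true $k_0$ because the constant $C$ in the local-average bound is uniform in $k\ge 2$ and hence robust to averaging over the posterior on $k$.

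No single step is genuinely difficult; the only point requiring vigilance is uniformity of the constant $C$ over $(\bm{\beta},Q)$ and over $k\ge 2$ in the adaptive case, which is exactly why the crude bound $(2h)^k\le 4h^2$ is preferable to finer estimates tailored to a specific $(\bm{\beta},Q)$. It is also worth noting that replacing the $\LL_1$ step by an $\LL_2$ or $\LL_\infty$ control would not recover the exponent $k/(2(2k+1))$ cleanly; the $\LL_1$ route via $\|g-g_0\|_1\le 2 d_H(g,g_0)$ is essential for achieving the stated rate.
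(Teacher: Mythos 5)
Your proof is correct, and it reaches the same key inequality as the paper --- $|\alpha-\alpha_0|\lesssim \sqrt{d_H(g,g_0)}$ --- by a genuinely different route. The paper works with the generic decomposition $g_i=\alpha_i+(1-\alpha_i)h_i$ and uses a geometric convexity argument: it locates the point $s_0$ where the larger constant $\alpha_1$ meets the graph of $g_2$, bounds $g_2$ from below by a tangent line, and integrates to obtain the direct lower bound $\norm{g_1-g_2}_1\ge (\alpha_1-\alpha_2)^2/6$. You instead exploit the explicit kernel representation \eqref{kmonodens}: the identification $\alpha=\beta_0=g(1-)$, the uniform estimate $\big|\int_{1-h}^1 g - \beta_0 h\big|\le Ch^2$ (valid with $C=4$ for all $k\ge 2$, all $\bm{\beta}\in\Delta_{k+1}$ and all $Q$), and an optimized window $h=\sqrt{d_H(g,g_0)}$. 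Your computations check out: $\int_{1-h}^1\psi_{j+1}(x,1)\,dx=h^{j+1}\le h^2$ for $j\ge1$, and $\int_{1-h}^1\psi_k(x,\theta)\,dx=\big(1-(1-h)/\theta\big)_+^k\le (2h)^k\le 4h^2$ for $h\le 1/2$, and the case $d_H>1/4$ (where $h=\sqrt{d_H}>1/2$) is vacuous since $|\alpha-\alpha_0|\le 1$. What the paper's argument buys is independence from the mixture representation --- it needs only that the non-constant parts are convex, decreasing, and vanish at $1$; what yours buys is transparency of the constants and, as you note, manifest uniformity over $k\ge 2$ in the adaptive setting, where posterior draws carry varying $k$. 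Both arguments (yours and the paper's) implicitly require that every posterior draw be at least $2$-monotone, i.e., that the prior on $k$ in the adaptive case be supported on $\{2,3,\ldots\}$; for a draw with $k=1$ your bound degrades to $O(h)$ rather than $O(h^2)$ and the identification of $\alpha$ with $g(1-)$ breaks down, so this restriction is essential but is already built into the hypothesis $k\ge 2$ of the theorem. The concluding step --- trading $M_n$ against $M_n^2$ when passing from the Hellinger ball to the $|\alpha-\alpha_0|$ ball --- is exactly the paper's final line.
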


\section{Simulation Study}	
\label{sec:simulation}

We implement the proposed Bayesian approach for a $k$-monotone density estimation. Specifically, we employ the Dirichlet process prior for the mixing distribution. To simplify, we only retain the additional uniform component and the mixture component of k-monotone kernels in computation. We consider both scenarios: when the value of $k$ is known and when it is unknown. Simulation results demonstrate the superiority of our method compared to nonparametric maximum likelihood estimation for monotone density, as well as for convex and monotonically nonincreasing density. We present the specifics of our simulation in the following sections.

\subsection{Estimation accuracy}

To perform posterior sampling under the Dirichlet process mixture prior, we utilize the sliced Gibbs sampling algorithm as described in \cite{Kalli2011}. We use a uniform base measure 
on $[n^{-1},1]$ for the Dirichlet process prior and set the precision parameter to a fixed value $1$. For the simplified model, we set $\beta_{j} = 0$ for $j = 1, \ldots, k-1$, and give the proportion of the uniform component $\beta_0$ a uniform prior on $[0,1]$. 
We let $k$ be fixed or assign an appropriate prior for $k$. In particular, when using the adaptive Bayesian approach, the prior on $k$ is uniformly distributed over the set ${1, \ldots, 10}$. We select the largest $k$ at $10$, which is sufficiently large to approximate common smoothly decreasing densities of interest.
In the following, we generate $1000$ posterior samples, based on which we make inferences on the unknown density function after dropping the first $2000$ burn-in ones in every Bayesian application.

Let the sequence $\theta_{j,J} = j/J$. 
We consider the following density functions:
\begin{itemize}
    \item $g_1(x) = \psi_2(x, 1) = 2(1-x)$,
    \item $g_2(x) = 0.5f_1(x) + 0.5 = 1.5 - x$,
    \item $g_3(x) = \sum_{j=1}^3 3^{-1}\psi_2(x,\theta_{j,3})$,
    \item $g_4(x) = 0.5 f_3(x) + 0.5$,
    \item $g_5(x) = \sum_{j=1}^3 3^{-1}\psi_4(x,\theta_{j,3})$,
    \item $g_6(x) = \int_0^1 \psi_4(x,\theta) 2\theta d\theta$. 
\end{itemize}
For $g_6$, the mixing distribution for $\theta$ is Beta(2, 1), and sampling according to $g_6$ is straightforward.

We take sample sizes of $n = 100, 200$, and $500$. For every sample size, we generate independent and identically distributed samples with all six aforementioned models. The proposed Bayesian procedure is applied to each dataset, accounting for both known and unknown values of $k$. 
To compare with non-Bayesian methods, we apply the posterior mean density function as the Bayesian estimator. We consider the classical Grenander estimator, as well as the nonparametric maximum likelihood estimator, for convex and nonincreasing densities on the interval $[0,1]$. To measure the deviation from the true density functions, for each estimate $\hat{g}$, we compute the mean squared error (MSE) over a grid in $[0, 1]$. This grid is defined as $x_{j,K} = j/K$ for $j=1, \ldots, K$. The MSE is then calculated as follows:
\begin{align*}
    \text{MSE}(\hat{g}) = \frac{1}{K}\sum_{j=1}^K(\hat{g}(x_{j,K}) - g_0(x_{j,K}))^2.
\end{align*}
Here we choose $K=100$ and $f_0$ stands for the corresponding $f_i$, $i=1,\ldots,6$.

We independently conduct $R=500$ iterations for each setup and present the average mean squared error (MSE) calculated in Table \ref{tb:mse}. Each row in Table \ref{tb:mse} corresponds to a specific method applied to the dataset with the corresponding sample size. ``Bay'' denotes the Dirichlet mixture model with a known $k$. ``Ada'' represents the Bayesian methods where $k$ is unknown. ``Con'' and ``Gre'' stand for the nonparametric maximum likelihood estimation for the convex and nonincreasing density class and for the nonincreasing density class, respectively.

\begin{table}[ht]
\caption{Average of MSE.} 
\label{tb:mse}
\centering
\begin{tabular}{crrrrrrrr}
  \hline
 & & $g_1$ & $g_2$ & $g_3$ & $g_4$ & $g_5$ & $g_6$\\ 
  \hline
\multirow{4}{*}{$n=100$} & Bay & 0.018 & 0.018 & 0.027 & 0.018 & 0.029 & 0.028 \\ 
& Ada & 0.024 & 0.023 & 0.027 & 0.026 & 0.030 & 0.031 \\
& Con & 0.019 & 0.022 & 0.041 & 0.032 & 0.068 & 0.076 \\
& Gre & 0.058 & 0.047 & 0.097 & 0.068 & 0.158 & 0.162 \\
\hline
\multirow{4}{*}{$n=200$} & Bay & 0.009 & 0.011 & 0.017 & 0.011 & 0.021 & 0.017 \\
& Ada & 0.014 & 0.013 & 0.016 & 0.014 & 0.019 & 0.017 \\ 
& Con & 0.010 & 0.011 & 0.024 & 0.017 & 0.040 & 0.041 \\  
& Gre & 0.036 & 0.029 & 0.058 & 0.041 & 0.102 & 0.102 \\ 
\hline
\multirow{4}{*}{$n=500$} & Bay & 0.003 & 0.005 & 0.008 & 0.006 & 0.010 & 0.010 \\ 
& Ada & 0.003 & 0.006 & 0.008 & 0.007 & 0.014 & 0.015 \\
& Con & 0.004 & 0.005 & 0.010 & 0.008 & 0.018 & 0.020 \\ 
& Gre & 0.018 & 0.015 & 0.029 & 0.022 & 0.052 & 0.053 \\ 
   \hline
\end{tabular}
\end{table}

In summary, the proposed Bayesian methods for both known and unknown values of $k$ demonstrate superiority over nonparametric likelihood estimations. The adaptive Bayesian approach performs nearly as well as the Bayesian method that employs the optimal choice of $k$.

\subsection{Estimation of the proportion of null hypotheses}

The simulation setup in this part closely follows that of \cite{Langaas2005}.
Here, we simulate DNA microarray data that involves multiple hypothesis testing problems. For each of the $m$ individuals, we collect a dataset of sample size $n$, denoted by $\bm{X}_j = (X_{1,j}, \ldots, X_{n, j})$ for $j=1,\ldots, m$, independently drawn from a multi-normal distribution, i.e., $\bm{X}_j\overset{i.i.d.}{\sim}\mathrm{N}(\bm{\mu}, \Sigma)$ where $\bm{\mu} = (\mu_1, \ldots, \mu_n)$. We test the hypotheses 
\begin{align*}
    H_{0,i}: \mu_i = 0, \text{ versus }H_{0,i}: \mu_i \neq 0,
\end{align*}
based on the t-test statistics. For comparison, we also consider the set of one-sided t-tests,
\begin{align*}
    H_{0,i}: \mu_i = 0, \text{ versus }H_{0,i}: \mu_i > 0.
\end{align*}
The mean vector $\bm{\mu}$ and the covariance matrix $\bm{\Sigma}$ are generated as follows. For $\alpha_0 \in \{0.5, 0.8, 0.9, 0.95\}$, we generate a binomial variable $n_0$ with parameter $n$ and $\alpha_0$. Randomly selected $n_0$ positions among $n$ and set the corresponding $\mu_i = 0$. For the two-sided t-tests, we generate the alternative means by independently sampling from the symmetric bitriangular distribution with parameter $a=\log_2 1.2$ and $b=2$; see \cite{Langaas2005} for details. For one-sided t-tests, the remaining $\mu_i$ are independently generated from the symmetric triangular distribution with the same parameters as in the previous case. To consider the effect of correlation between tests, we consider a specific block diagonal structure for $\bm{\Sigma}$. We choose a block size $G \in \{50, 100\}$. The within-block correlation $\rho$ takes values in $\{0, 0.25, 0.5, 0.75\}$. Between blocks, the coordinates are independent. Note that $\rho=0$ means all the tests are pairwise independent.

We choose $n=2000$ and $m=10$ throughout. For the Bayesian approach, we continue to use the Dirichlet process mixture prior with parameters defined as in the previous section while considering $k$ as an unknown parameter with a prior distribution. The proposed estimator is the posterior mean of $\beta_0$. For comparison, we also estimate the null proportion by the maximum likelihood estimation for convex and decreasing density class, as proposed in \cite{Langaas2005}. This can be easily computed using the \texttt{convest} function in the \texttt{R} package \texttt{limma}. Each setting is replicated $1000$ times, and the densities of these two estimators are plotted in Figure \ref{fig:g50} -- Figure \ref{fig:osg100}.

For both two-sided and one-sided tests, the simulation results exhibit a similar pattern. The presence of correlation between tests has a detrimental effect on the performance of both methods. However, our Bayesian procedure demonstrates more stable performance in the presence of within-group correlation and for cases with larger blocks of correlation. 
Furthermore, our method shows more accurate estimation performance when the proportion of null hypotheses is relatively large. However, when $\alpha_0$ is moderate, such as $0.5$, the convex maximum likelihood estimator appears to be less biased.
These findings suggest that our Bayesian approach offers advantages in handling correlated tests and estimating the null proportion accurately, particularly when there is a higher proportion of null hypotheses, which is the very common case. 
These observations highlight the strengths and limitations of both methods in different scenarios, providing valuable insights into their respective performance characteristics.

\begin{figure}[ht]
\captionsetup[subfigure]{labelformat=empty}
\centering
\begin{subfigure}{.24\textwidth}
  \centering
  \includegraphics[width=\textwidth]{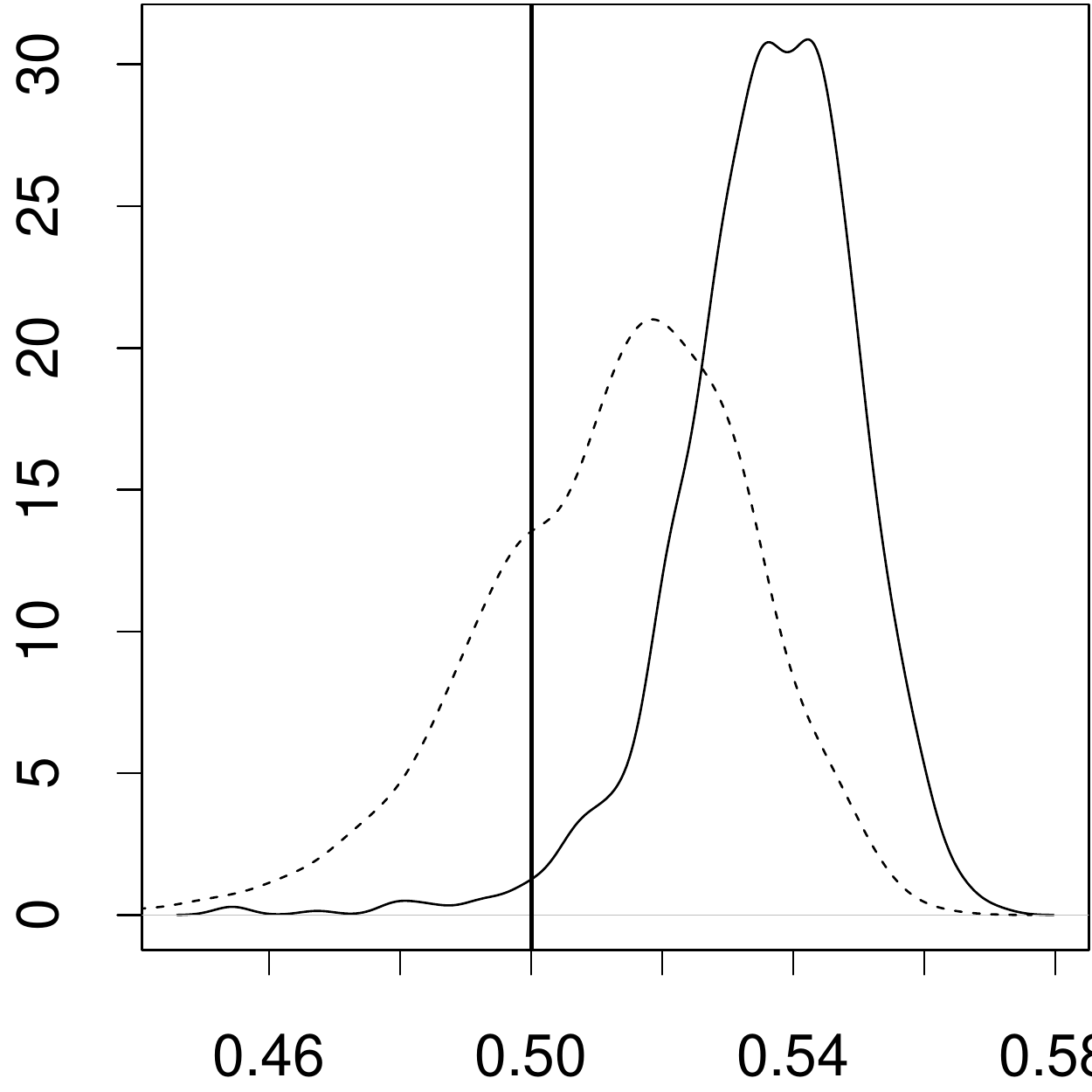}
  \caption{$\alpha_0=0.5$; $\rho = 0$}
\end{subfigure}
\begin{subfigure}{.24\textwidth}
  \centering
  \includegraphics[width=\textwidth]{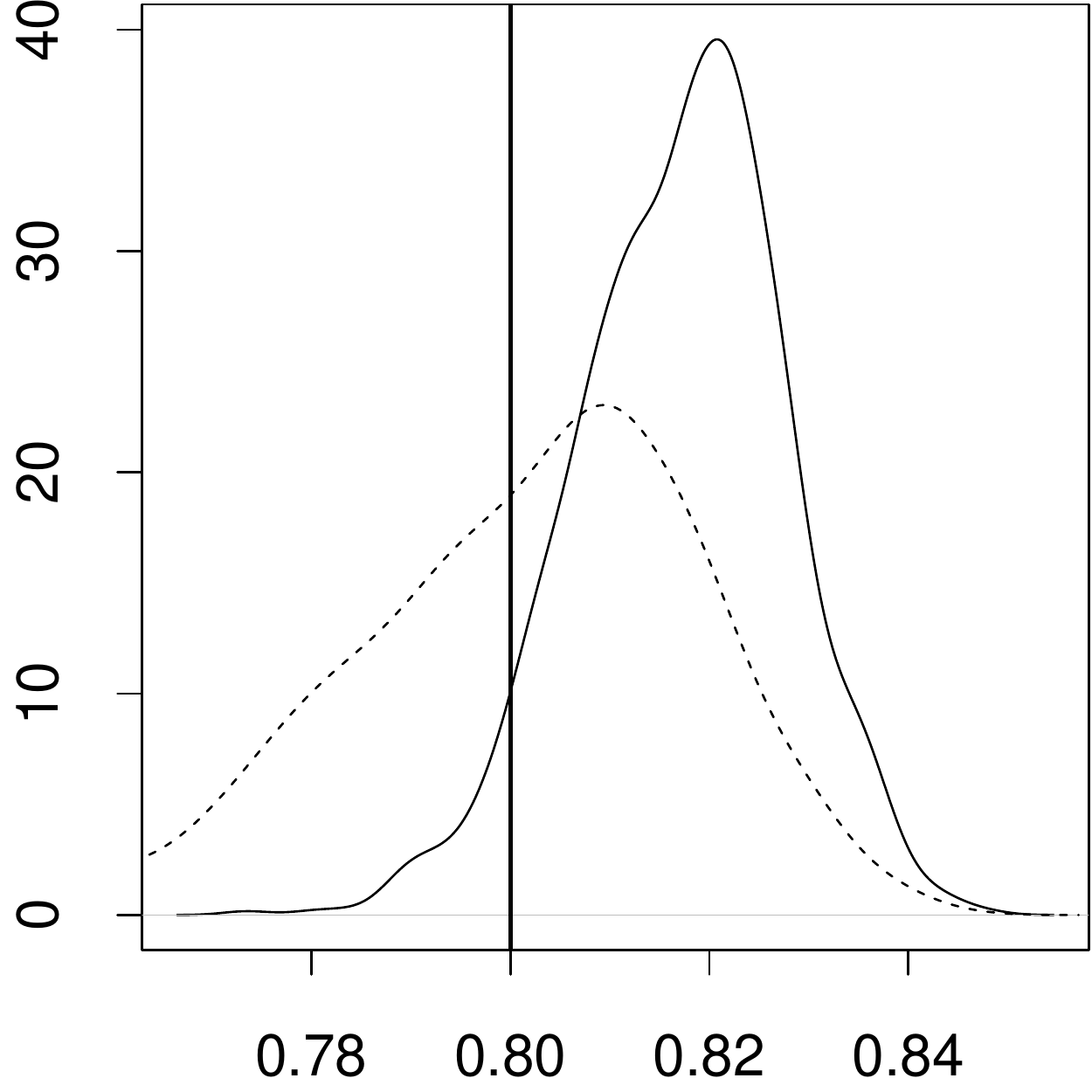}
  \caption{$\alpha_0=0.8$; $\rho = 0$}
\end{subfigure}
\begin{subfigure}{.24\textwidth}
  \centering
  \includegraphics[width=\textwidth]{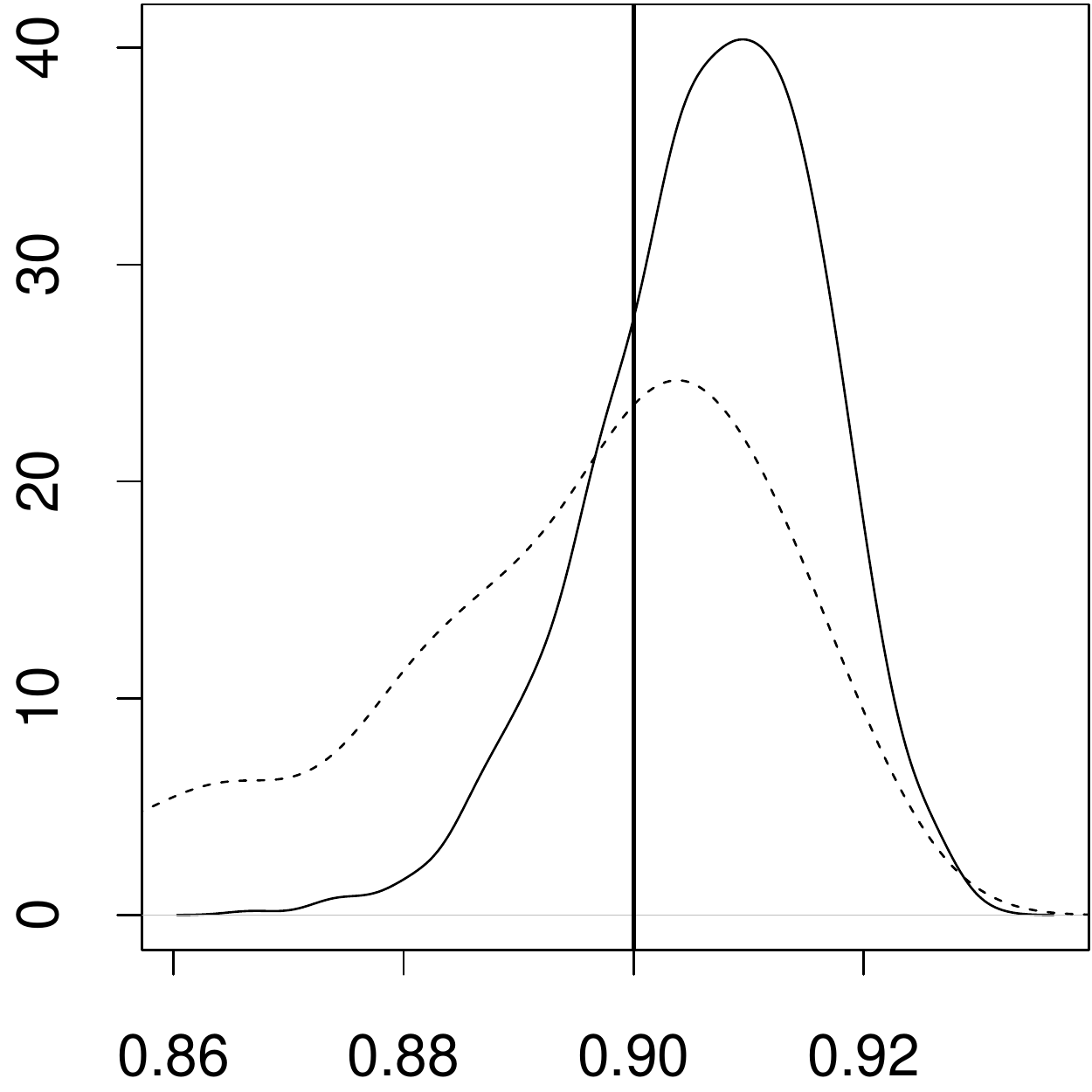}
  \caption{$\alpha_0=0.9$; $\rho = 0$}
\end{subfigure}
\begin{subfigure}{.24\textwidth}
  \centering
  \includegraphics[width=\textwidth]{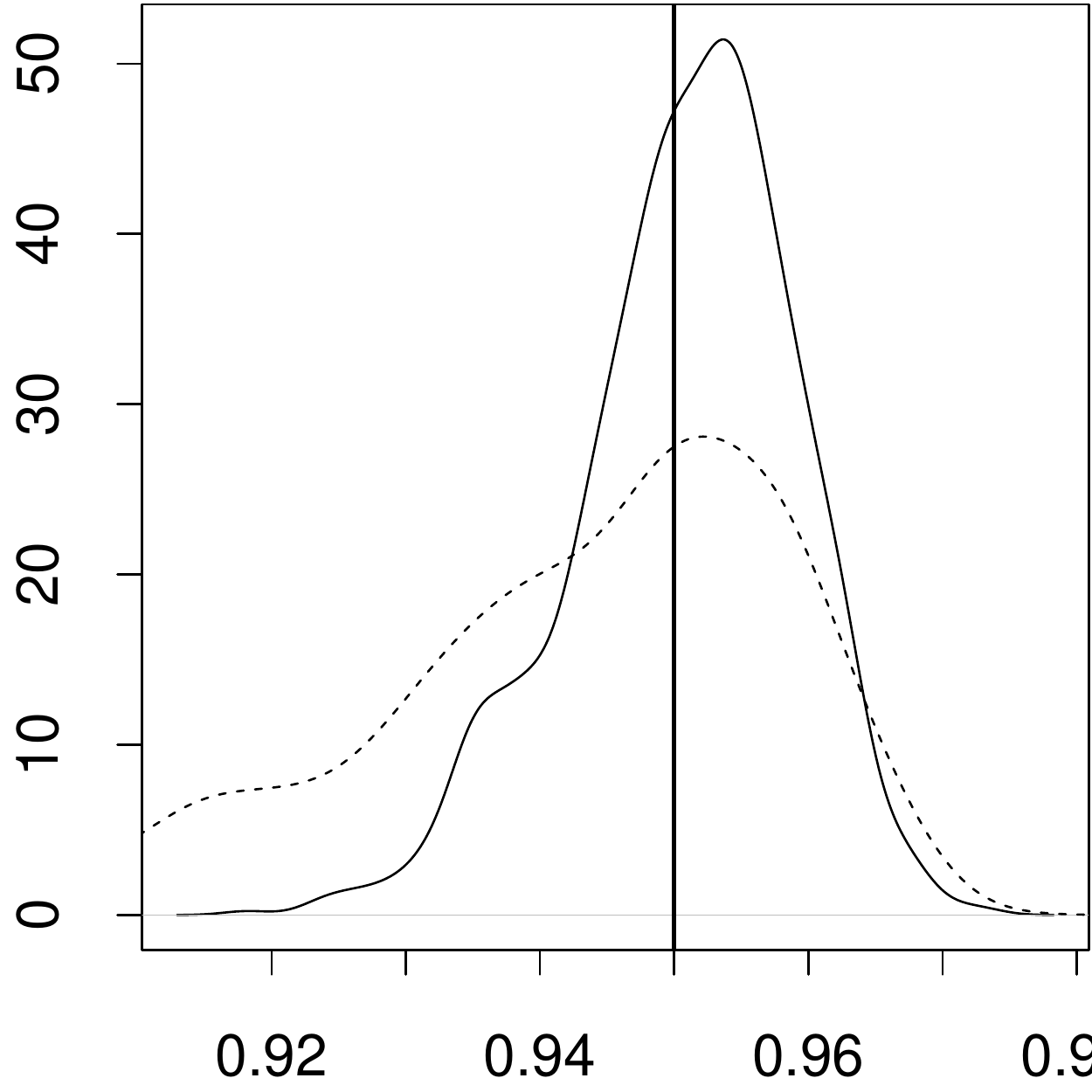}
  \caption{$\alpha_0=0.95$; $\rho = 0$}
\end{subfigure}

\begin{subfigure}{.24\textwidth}
  \centering
  \includegraphics[width=\textwidth]{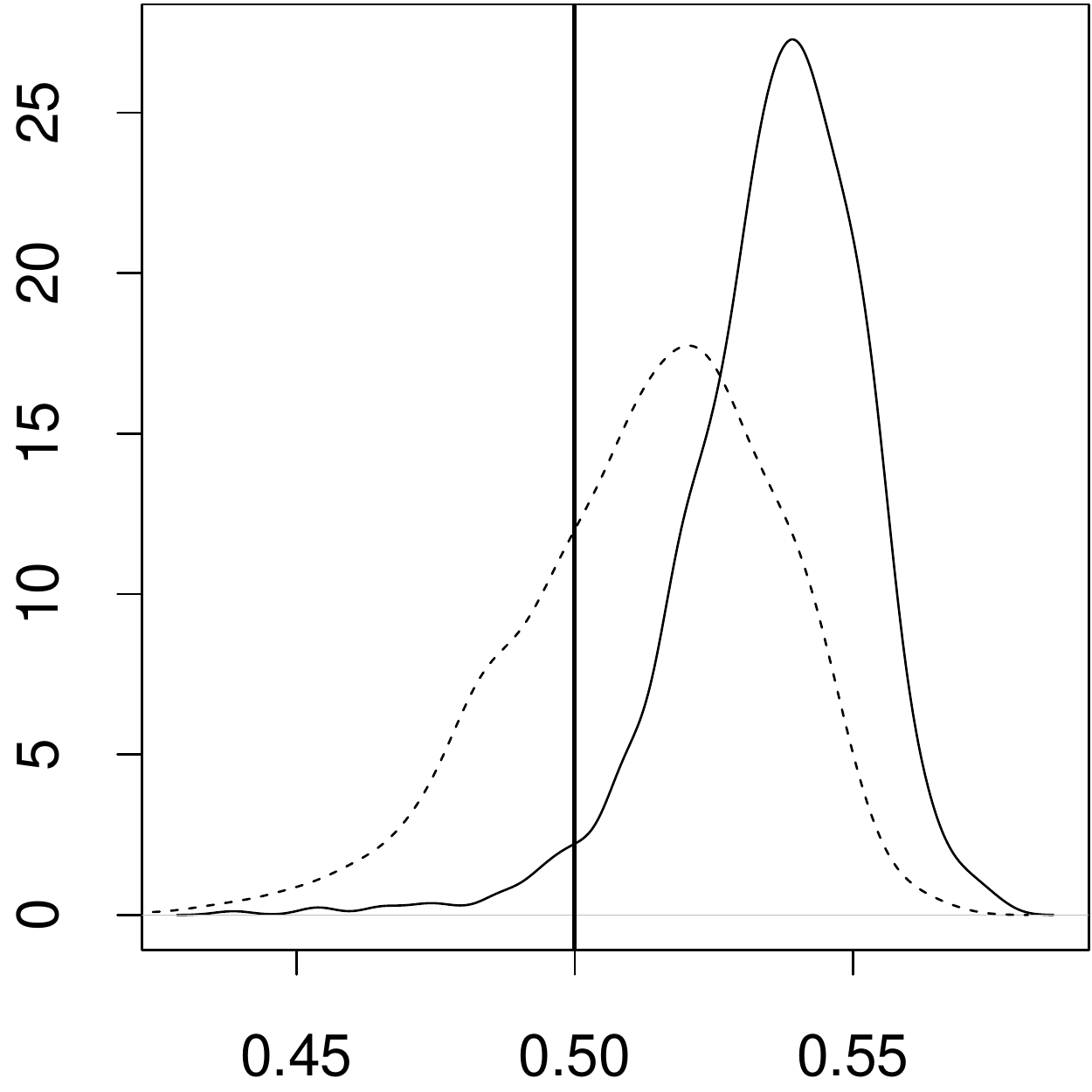}
  \caption{$\alpha_0=0.5$; $\rho = 0.25$}
\end{subfigure}
\begin{subfigure}{.24\textwidth}
  \centering
  \includegraphics[width=\textwidth]{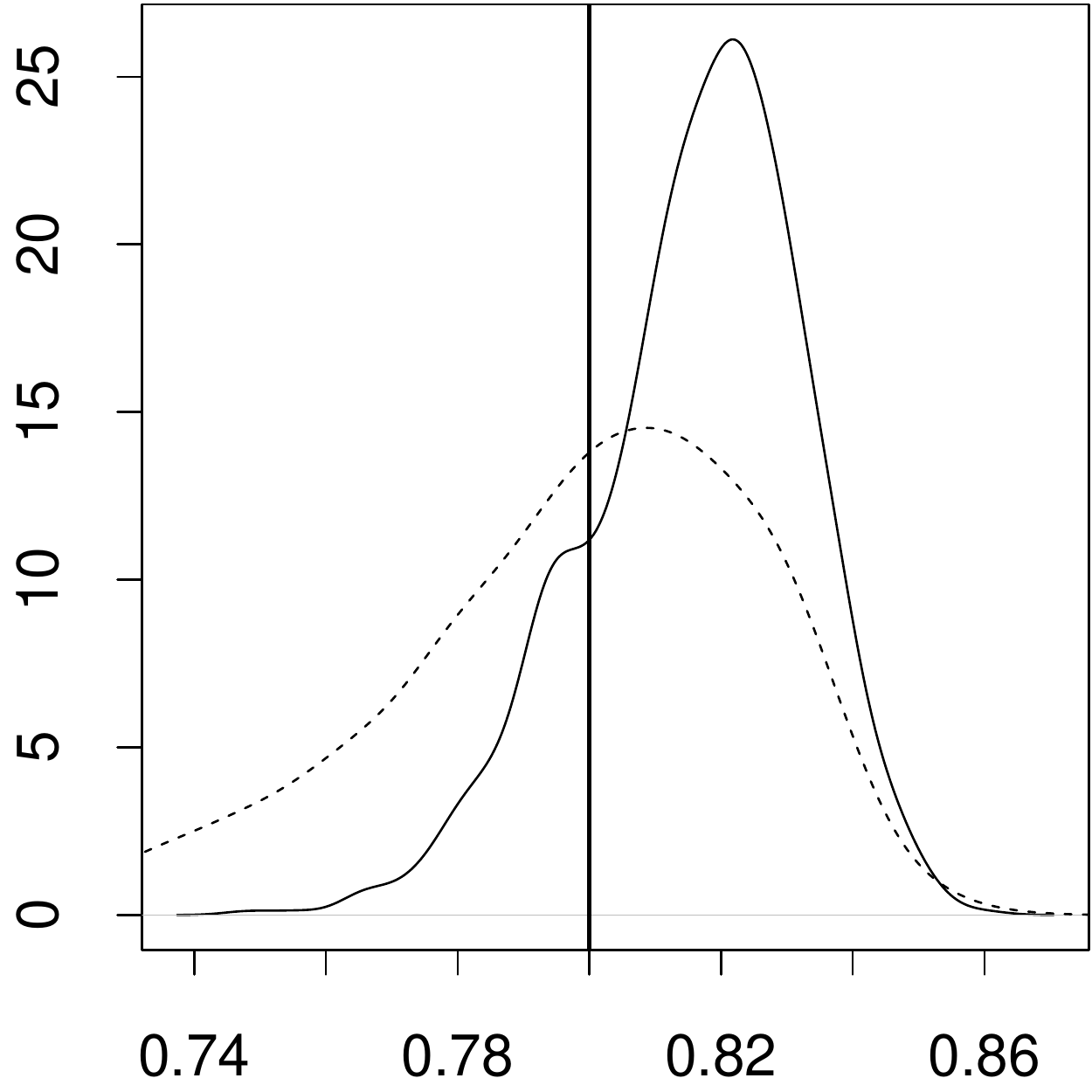}
  \caption{$\alpha_0=0.8$; $\rho = 0.25$}
\end{subfigure}
\begin{subfigure}{.24\textwidth}
  \centering
  \includegraphics[width=\textwidth]{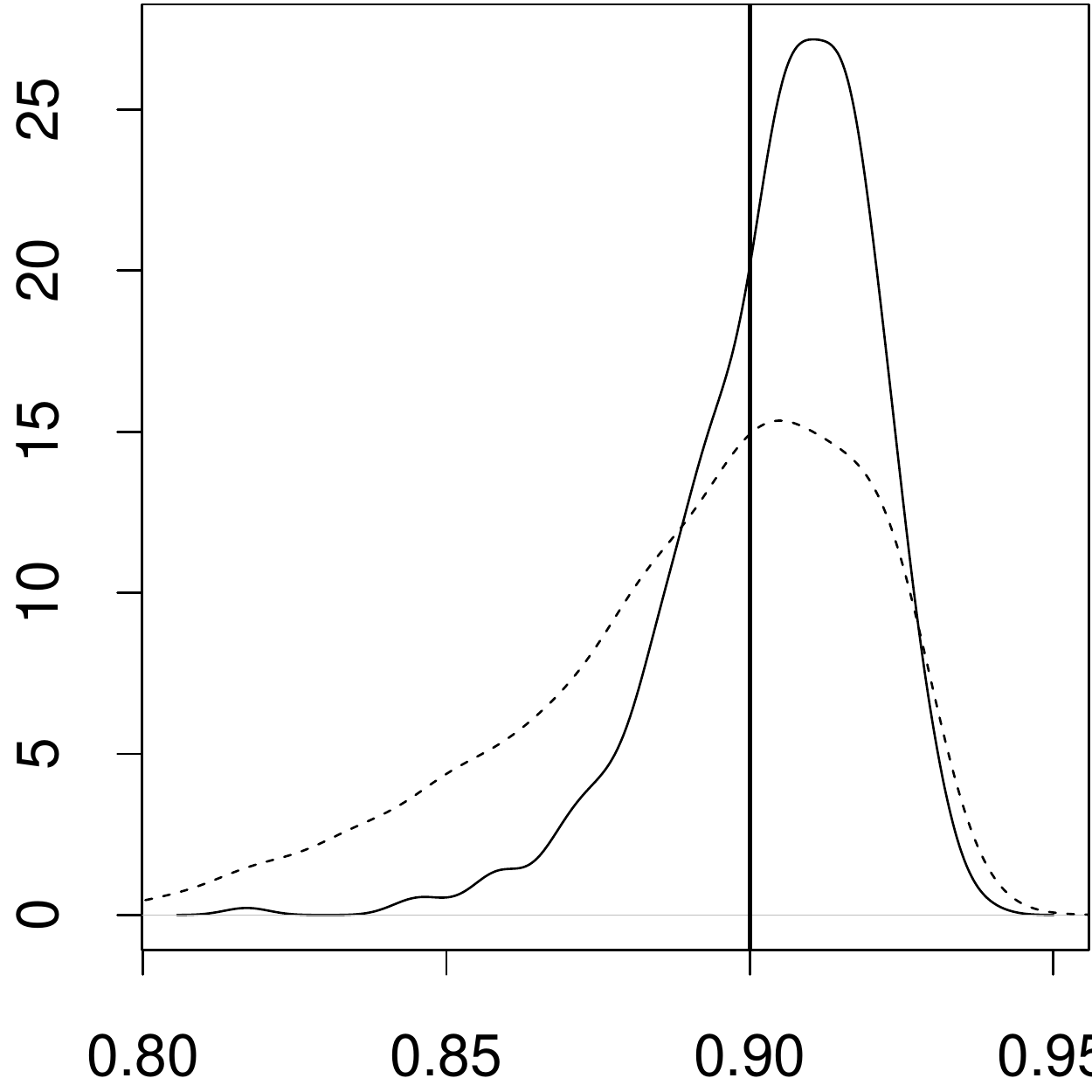}
  \caption{$\alpha_0=0.9$; $\rho = 0.25$}
\end{subfigure}
\begin{subfigure}{.24\textwidth}
  \centering
  \includegraphics[width=\textwidth]{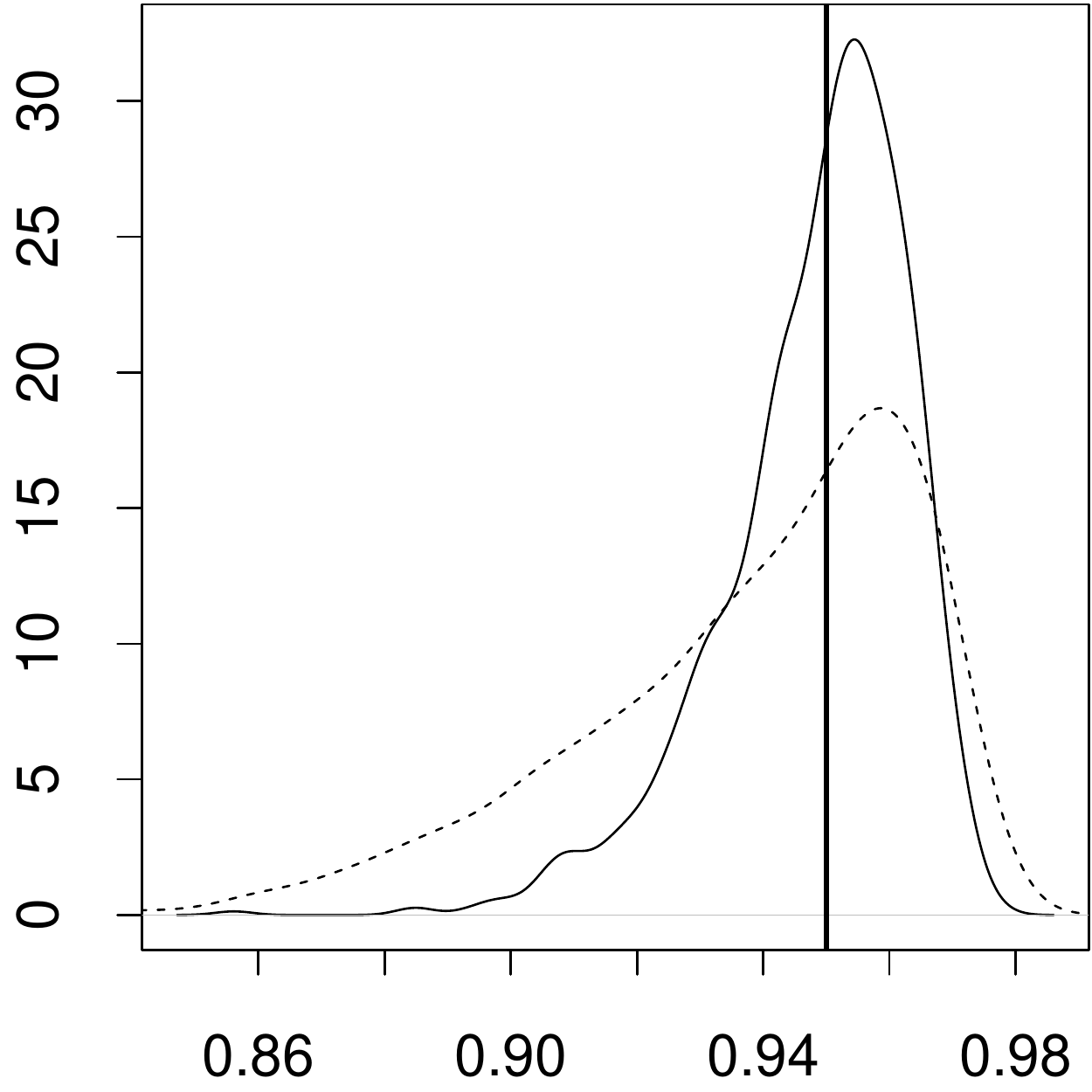}
  \caption{$\alpha_0=0.95$; $\rho = 0.25$}
\end{subfigure}

\begin{subfigure}{.24\textwidth}
  \centering
  \includegraphics[width=\textwidth]{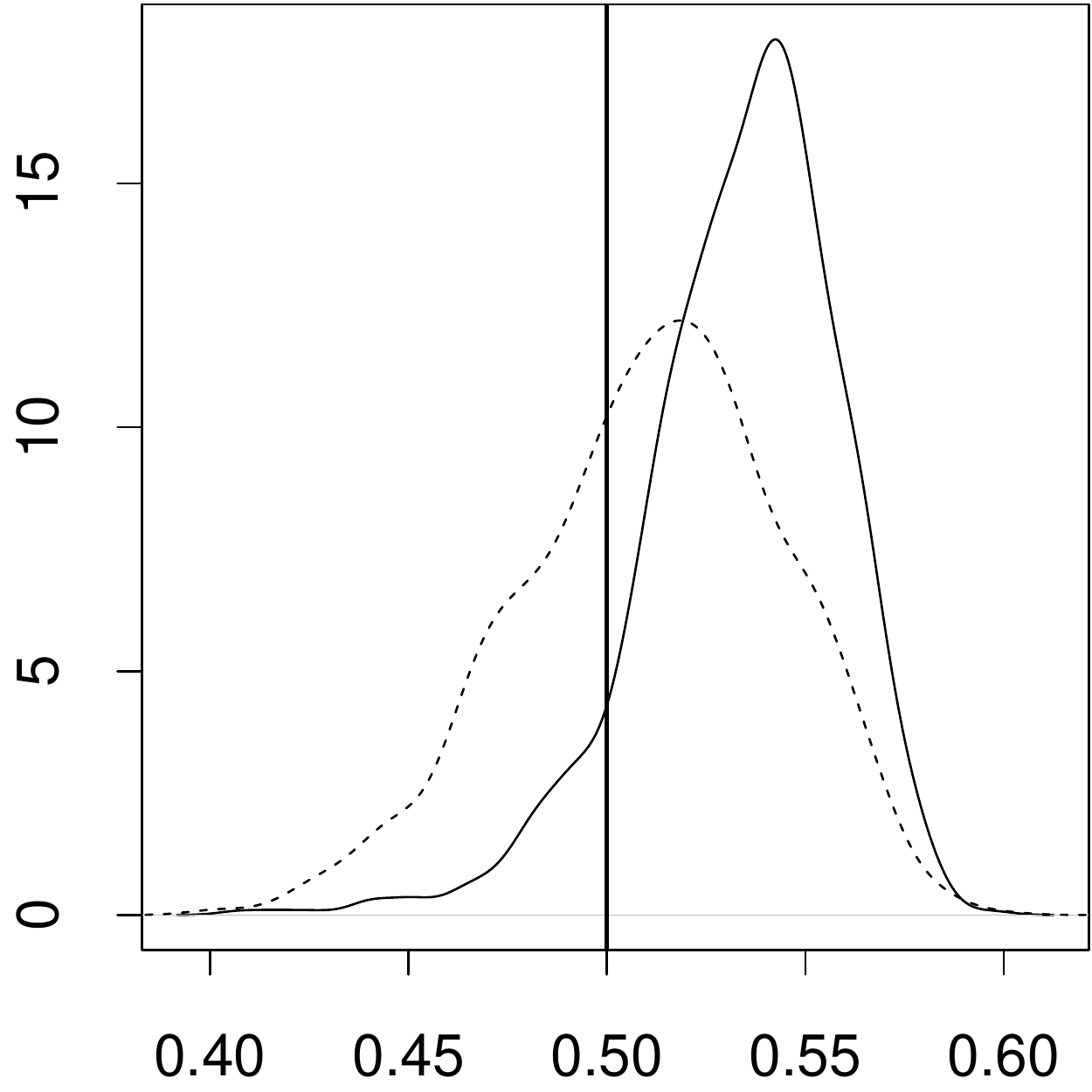}
  \caption{$\alpha_0=0.5$; $\rho = 0.5$}
\end{subfigure}
\begin{subfigure}{.24\textwidth}
  \centering
  \includegraphics[width=\textwidth]{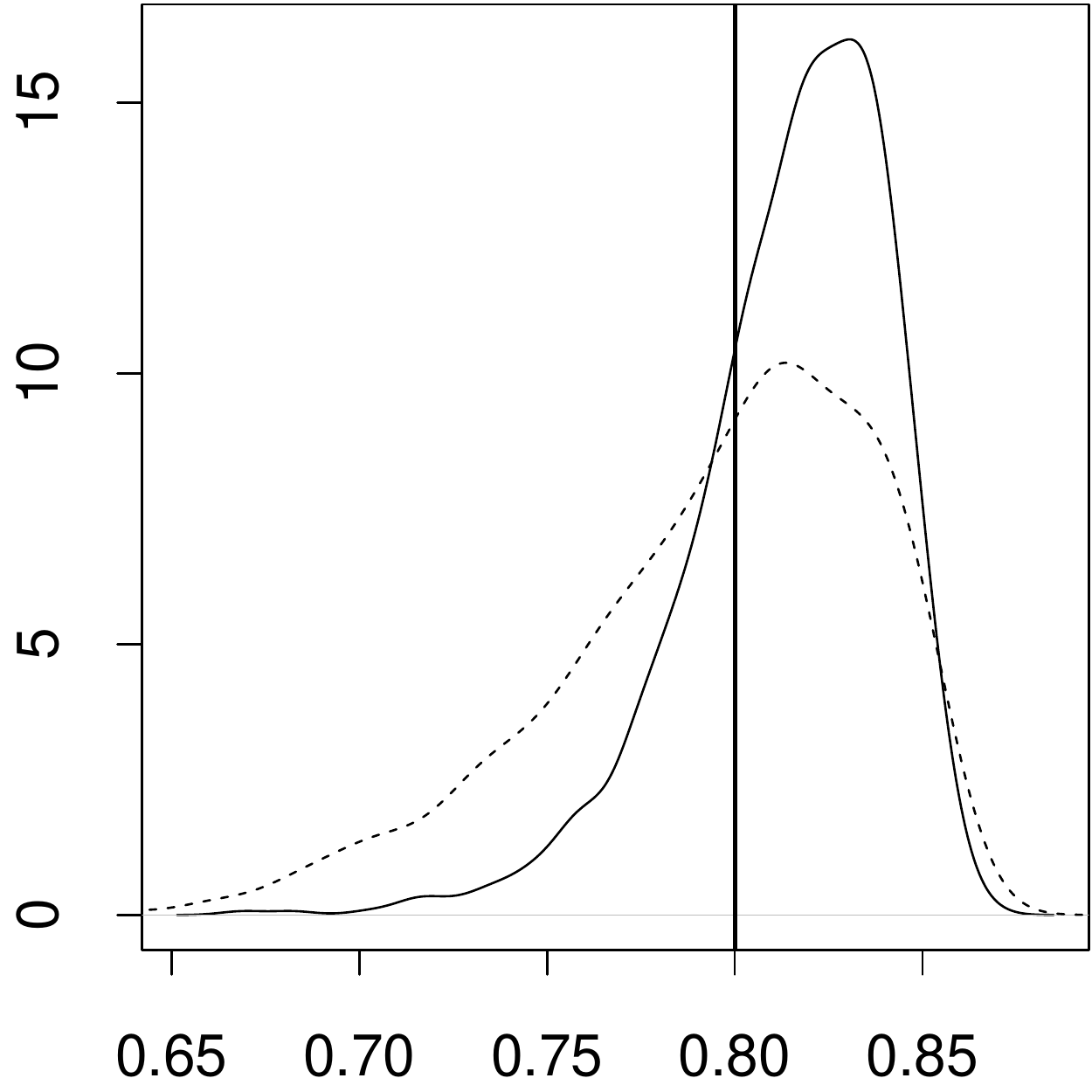}
  \caption{$\alpha_0=0.8$; $\rho = 0.5$}
\end{subfigure}
\begin{subfigure}{.24\textwidth}
  \centering
  \includegraphics[width=\textwidth]{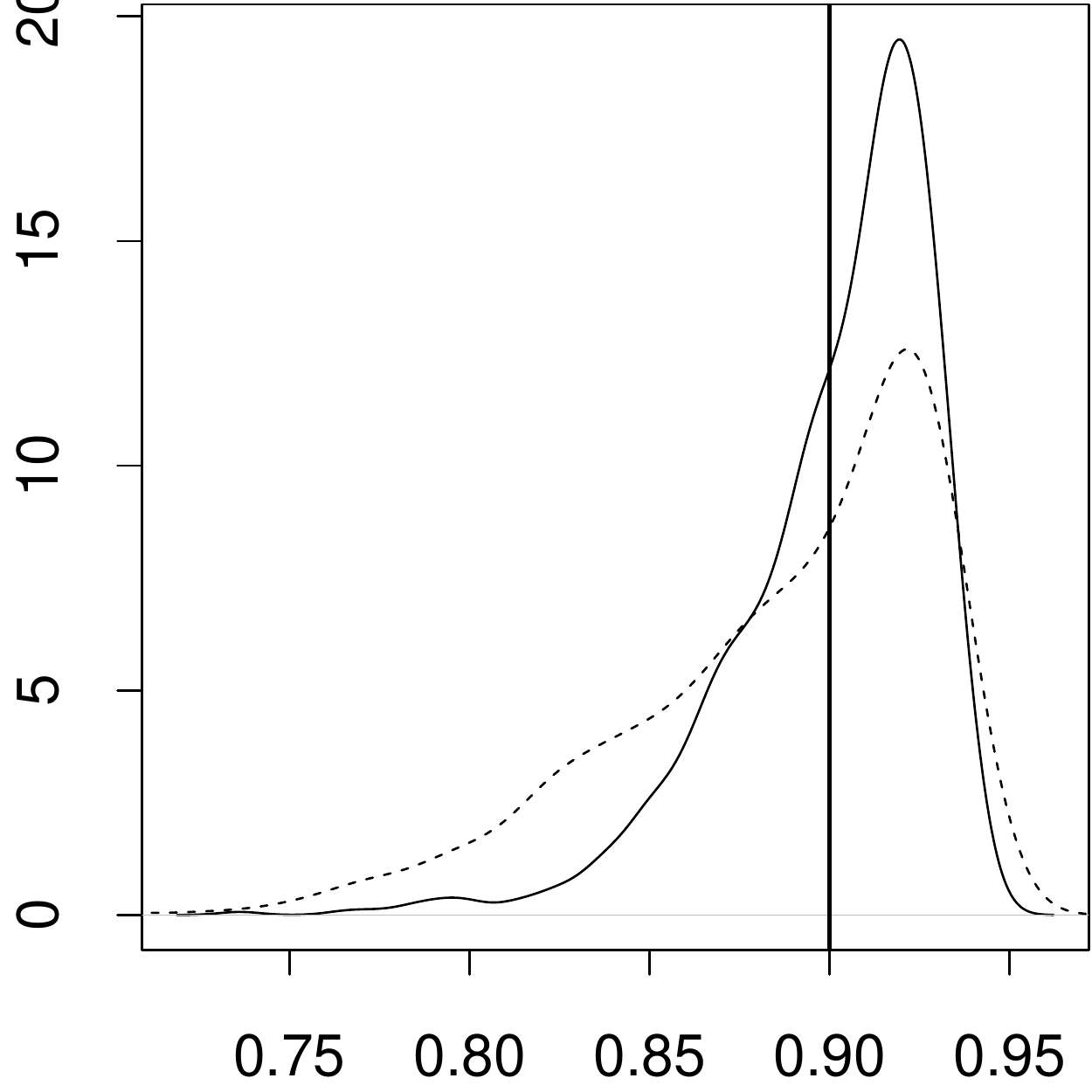}
  \caption{$\alpha_0=0.9$; $\rho = 0.5$}
\end{subfigure}
\begin{subfigure}{.24\textwidth}
  \centering
  \includegraphics[width=\textwidth]{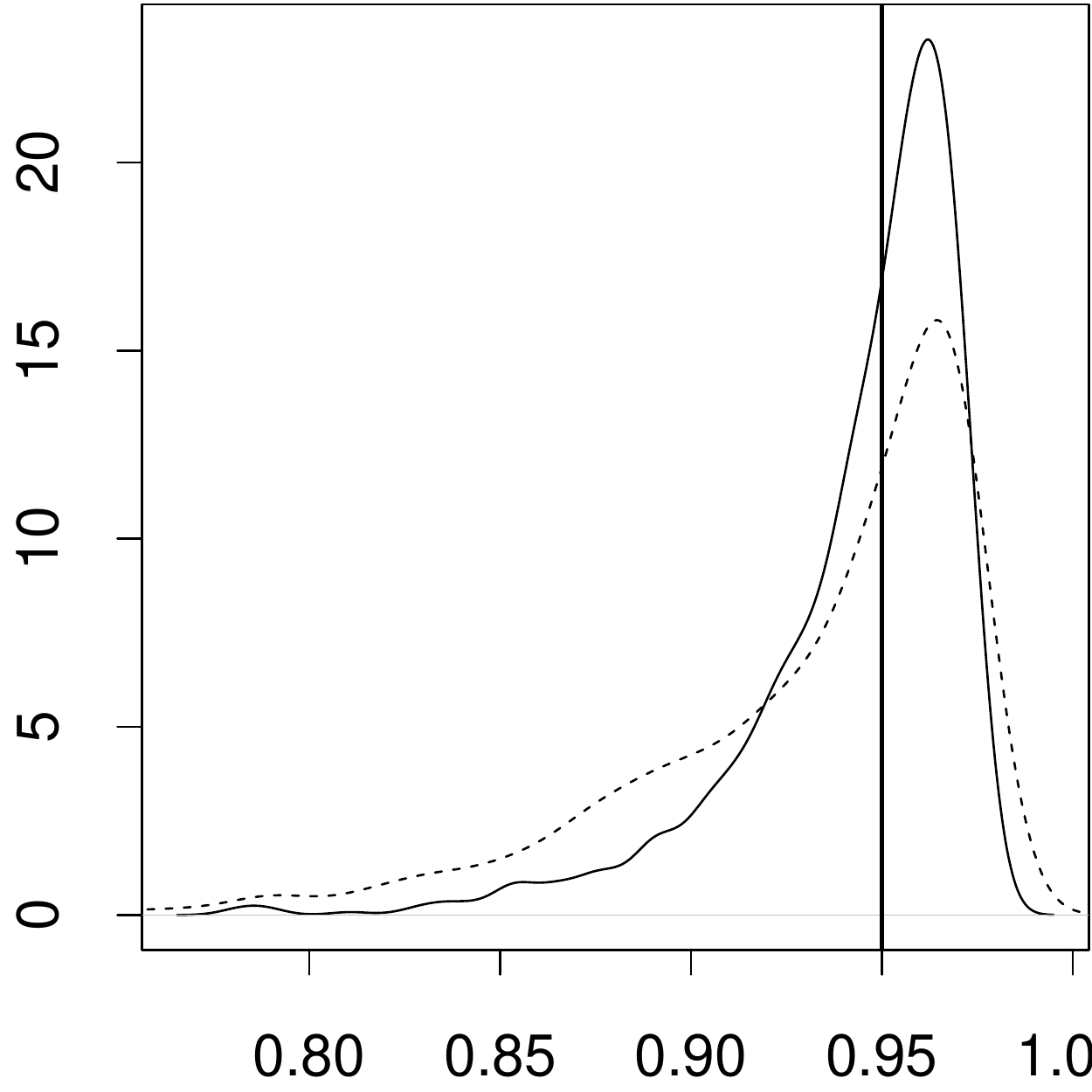}
  \caption{$\alpha_0=0.95$; $\rho = 0.5$}
\end{subfigure}

\begin{subfigure}{.24\textwidth}
  \centering
  \includegraphics[width=\textwidth]{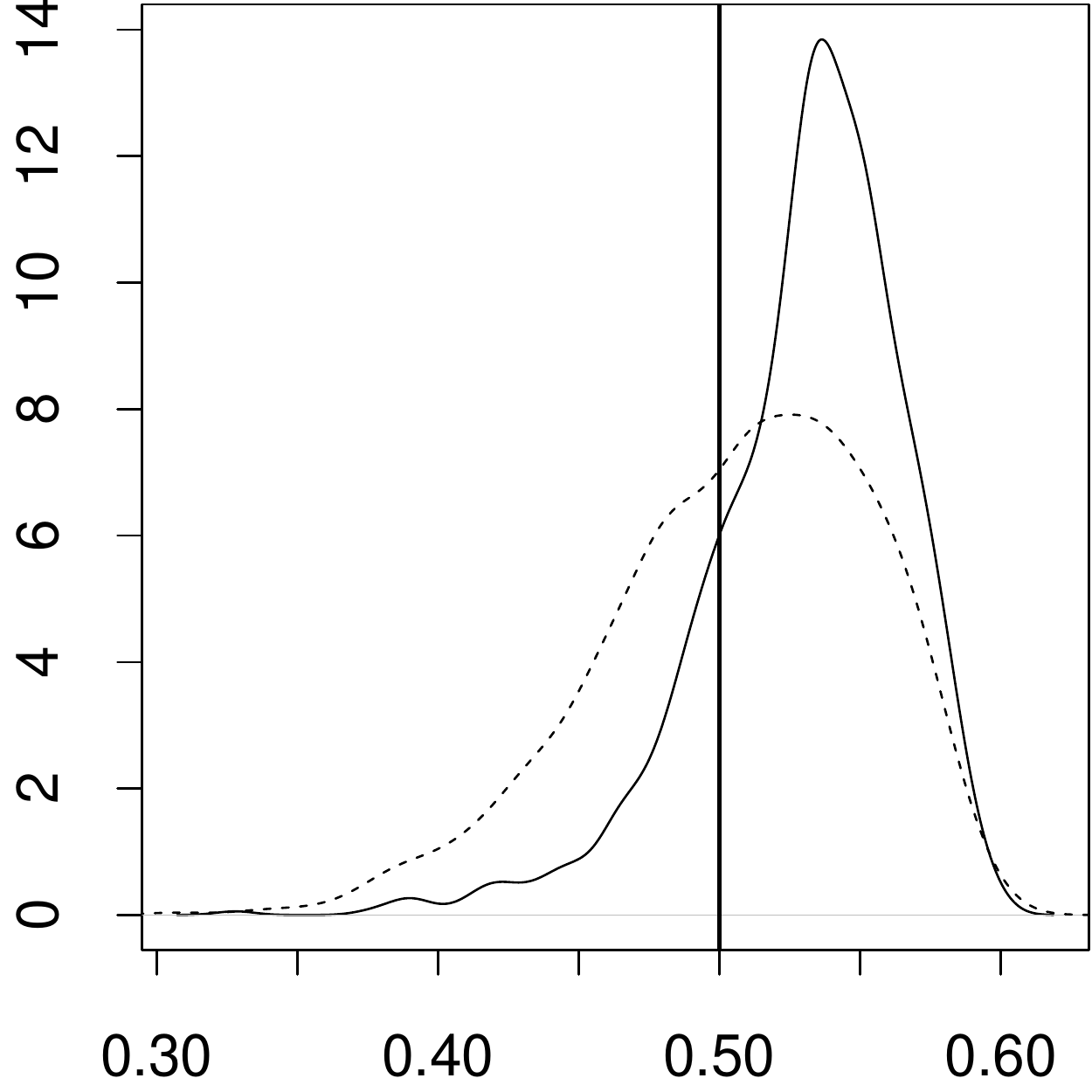}
  \caption{$\alpha_0=0.5$; $\rho = 0.75$}
\end{subfigure}
\begin{subfigure}{.24\textwidth}
  \centering
  \includegraphics[width=\textwidth]{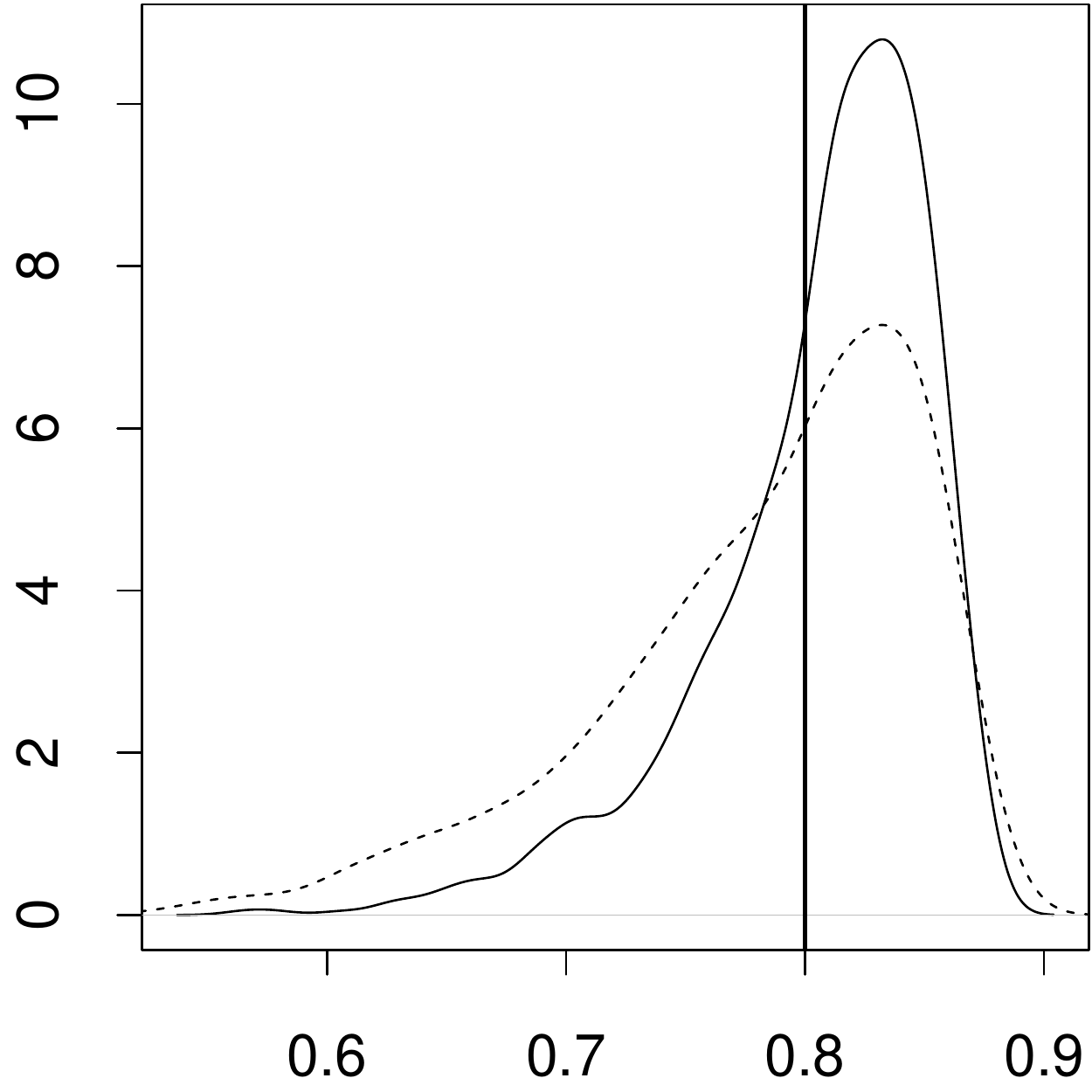}
  \caption{$\alpha_0=0.8$; $\rho = 0.75$}
\end{subfigure}
\begin{subfigure}{.24\textwidth}
  \centering
  \includegraphics[width=\textwidth]{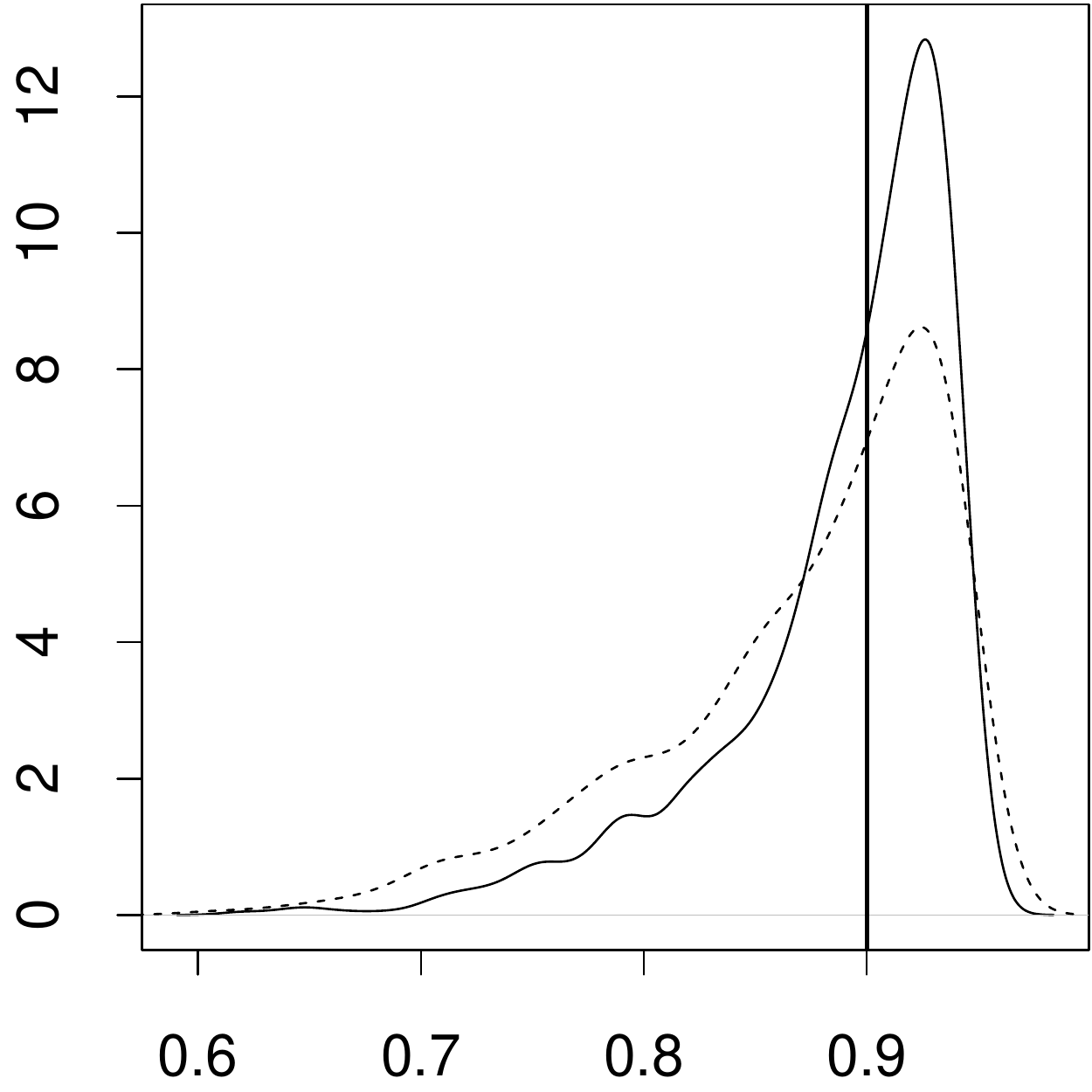}
  \caption{$\alpha_0=0.9$; $\rho = 0.75$}
\end{subfigure}
\begin{subfigure}{.24\textwidth}
  \centering
  \includegraphics[width=\textwidth]{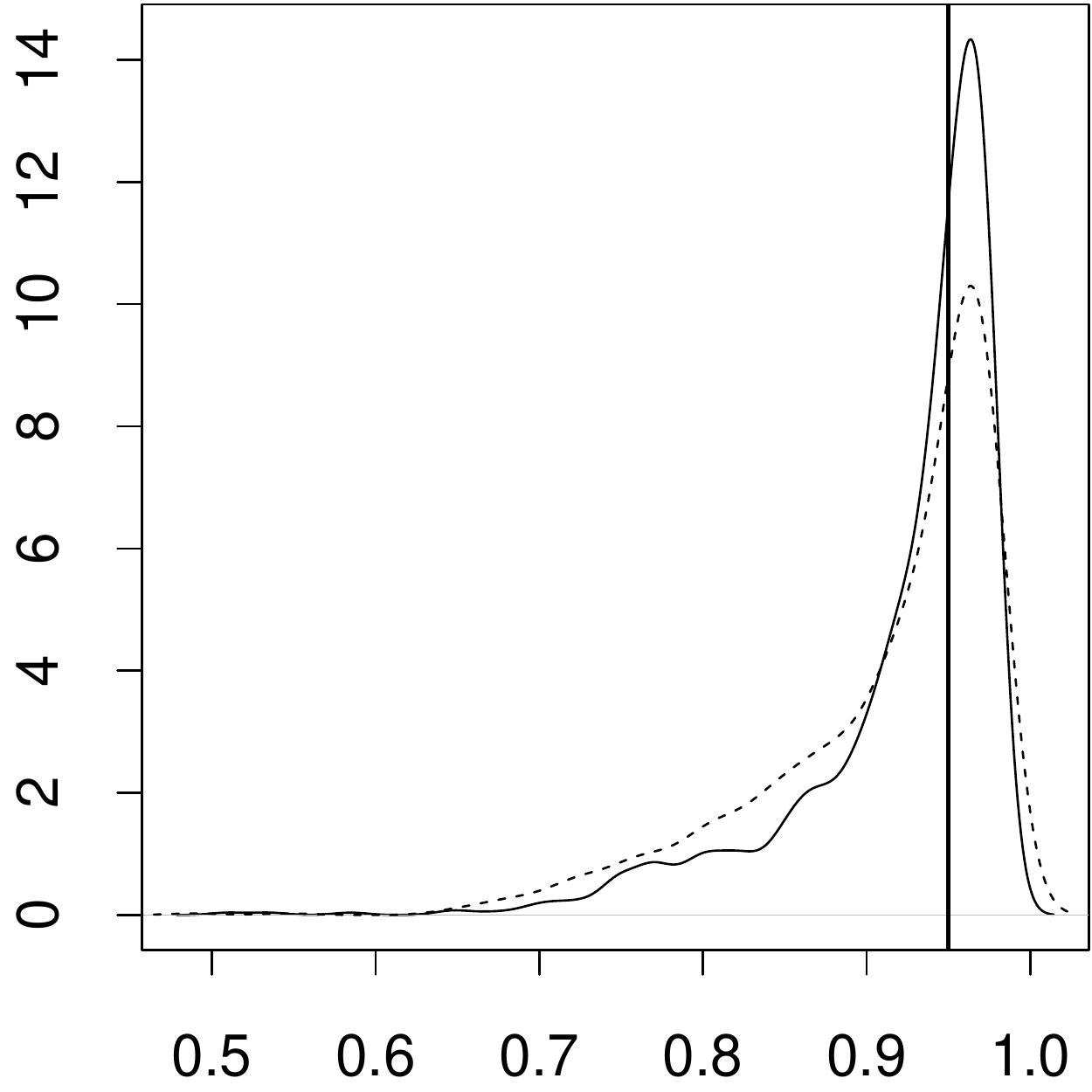}
  \caption{$\alpha_0=0.95$; $\rho = 0.75$}
\end{subfigure}
\caption{Density plots of the estimated $\alpha_0$ for $p$-values from two-sided t-tests ($G = 50$). The vertical line indicates the true $\alpha_0$, which is also marked below each figure, along with the within-group correlation coefficient. The solid lines are the densities of the posterior mean of $\alpha_0$, and the dashed lines are the densities of estimated $\alpha_0$ by fitting a convex decreasing density.
dash line}
\label{fig:g50}
\end{figure}

\begin{figure}[ht]
\captionsetup[subfigure]{labelformat=empty}
\centering
\begin{subfigure}{.24\textwidth}
  \centering
  \includegraphics[width=\textwidth]{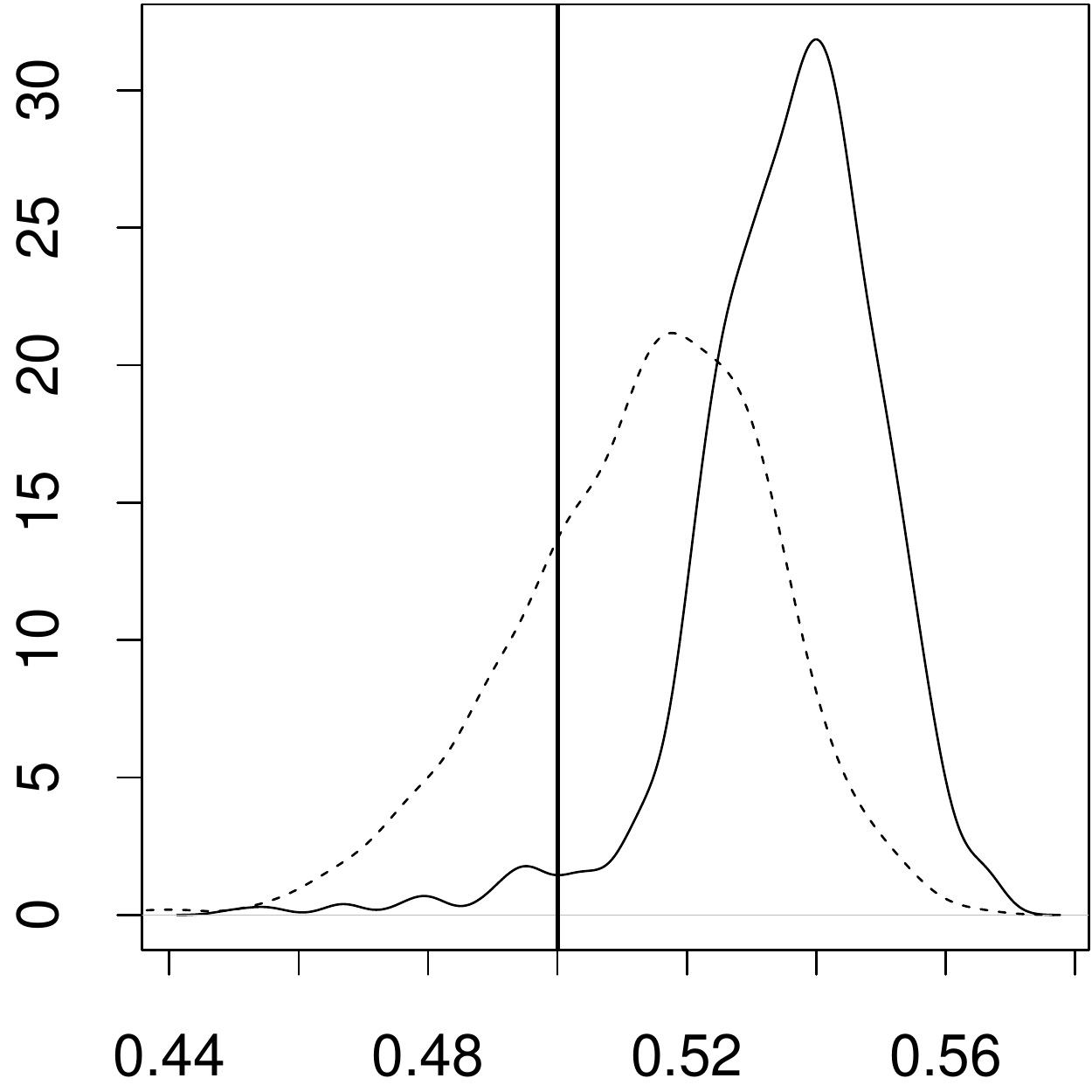}
  \caption{$\alpha_0=0.5$; $\rho = 0$}
\end{subfigure}
\begin{subfigure}{.24\textwidth}
  \centering
  \includegraphics[width=\textwidth]{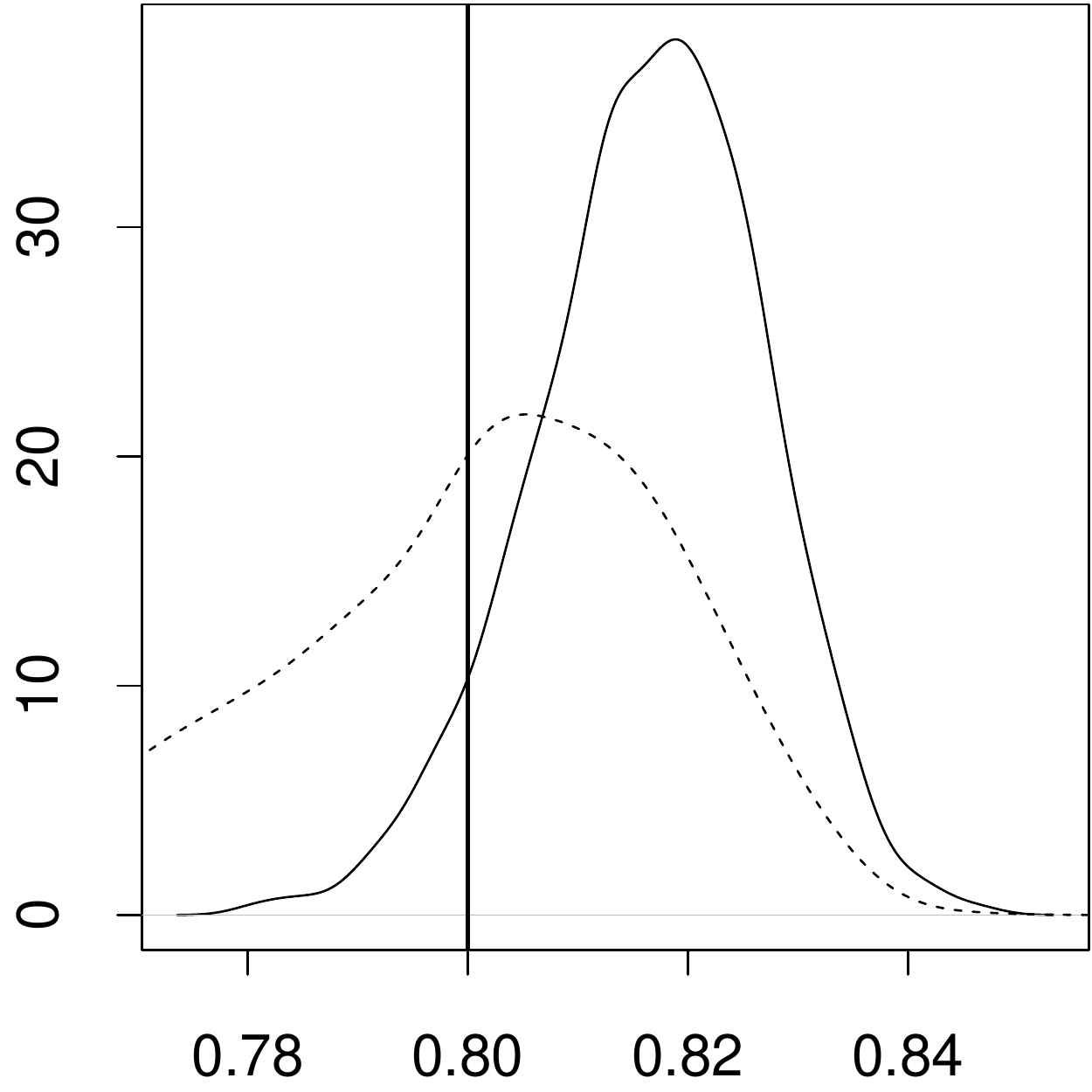}
  \caption{$\alpha_0=0.8$; $\rho = 0$}
\end{subfigure}
\begin{subfigure}{.24\textwidth}
  \centering
  \includegraphics[width=\textwidth]{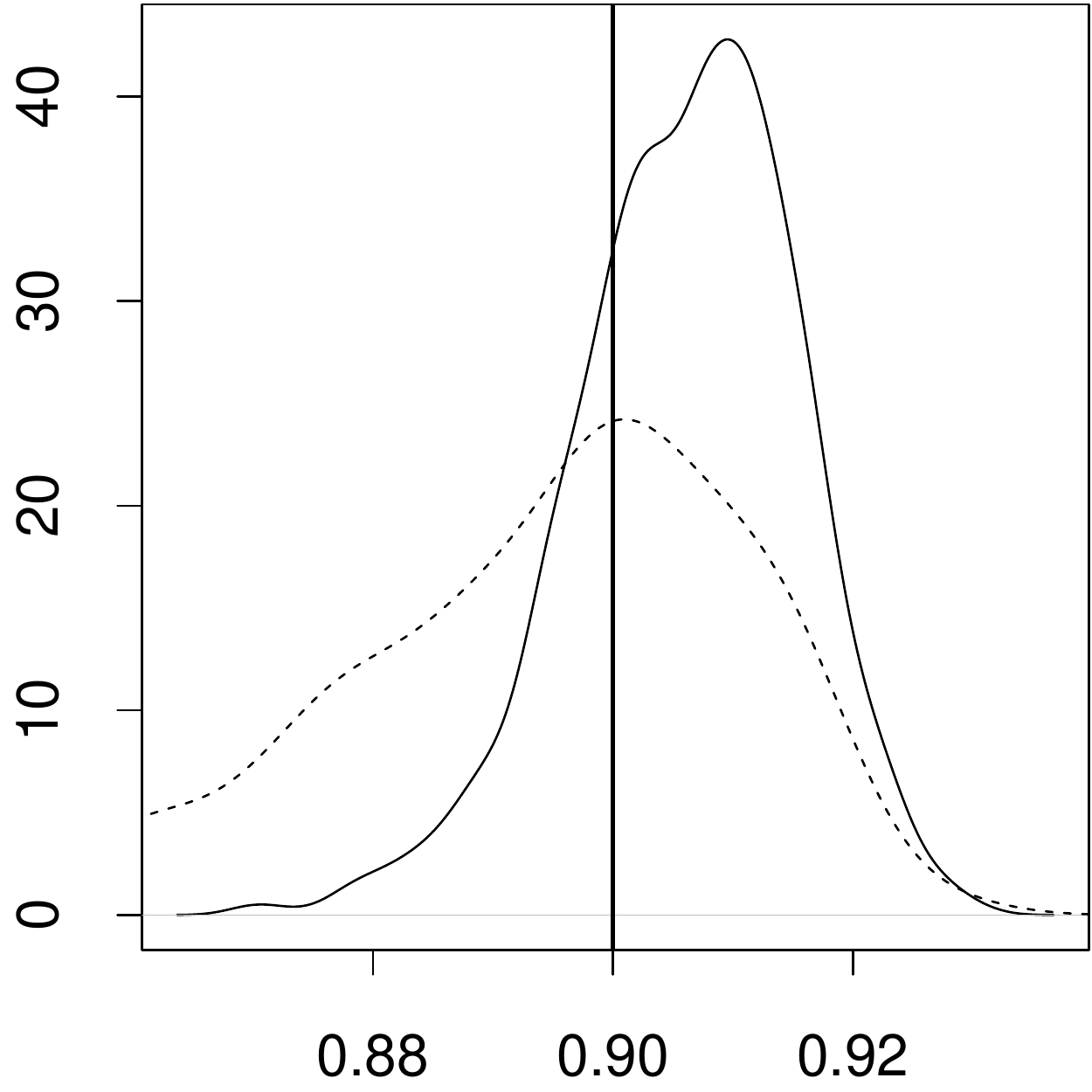}
  \caption{$\alpha_0=0.9$; $\rho = 0$}
\end{subfigure}
\begin{subfigure}{.24\textwidth}
  \centering
  \includegraphics[width=\textwidth]{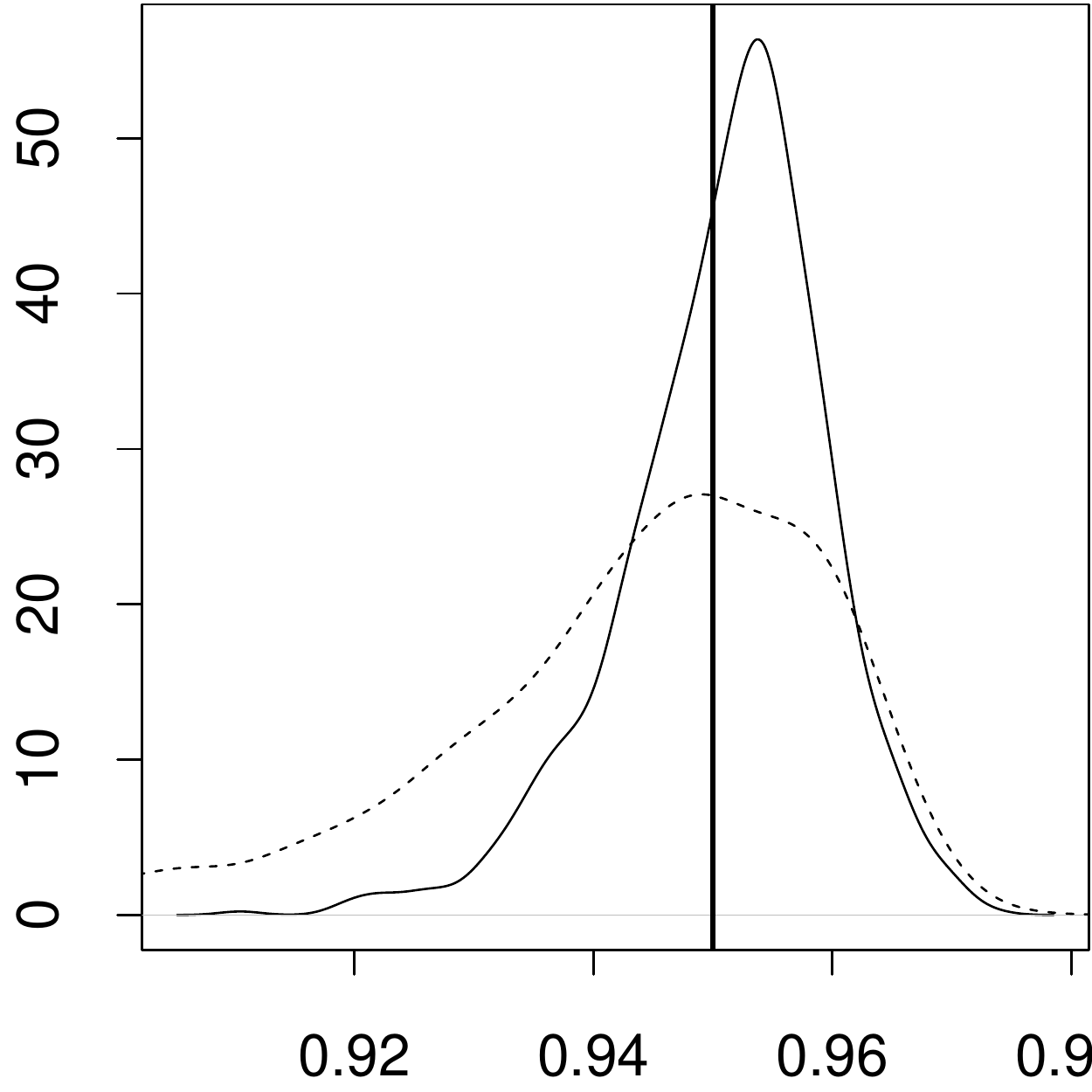}
  \caption{$\alpha_0=0.95$; $\rho = 0$}
\end{subfigure}

\begin{subfigure}{.24\textwidth}
  \centering
  \includegraphics[width=\textwidth]{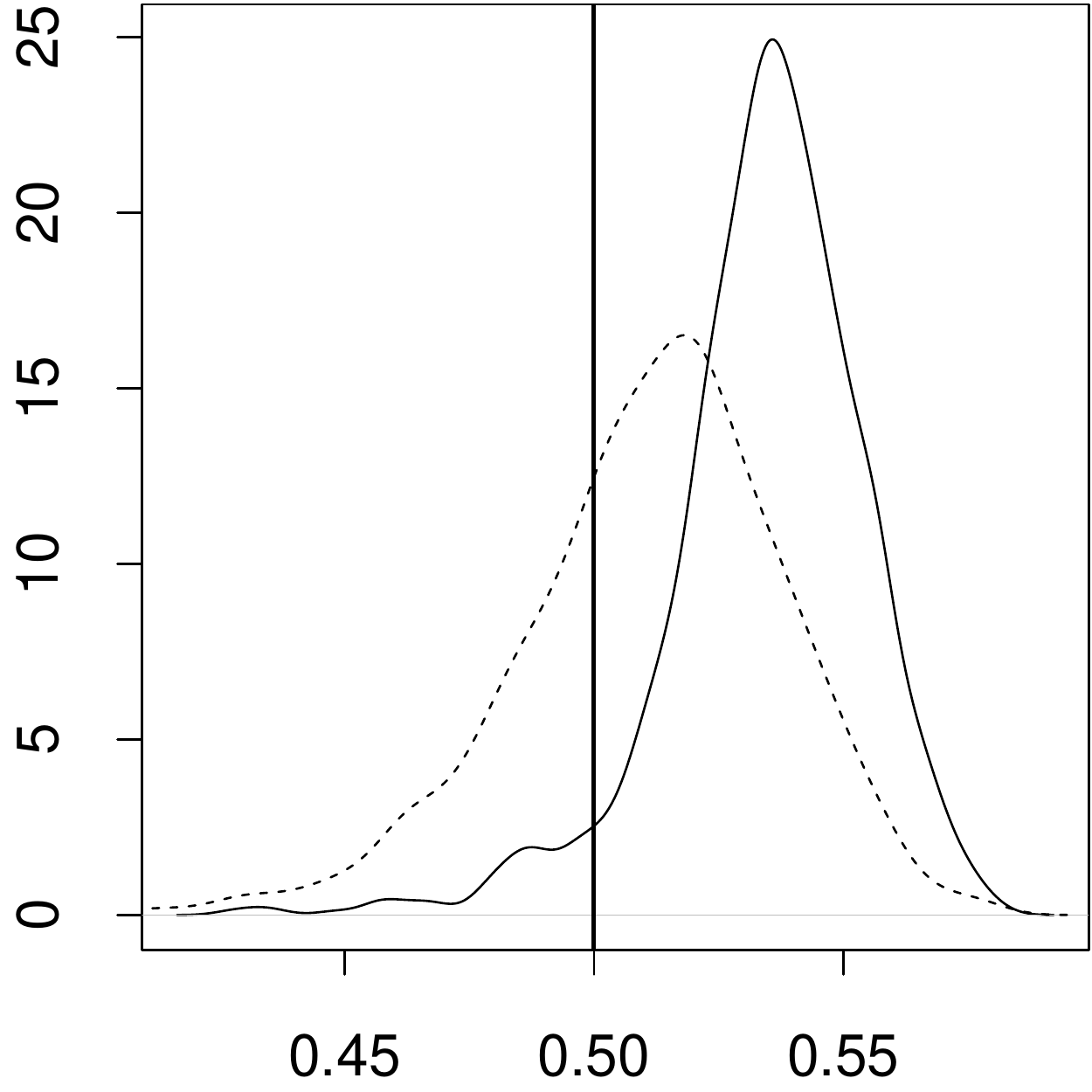}
  \caption{$\alpha_0=0.5$; $\rho = 0.25$}
\end{subfigure}
\begin{subfigure}{.24\textwidth}
  \centering
  \includegraphics[width=\textwidth]{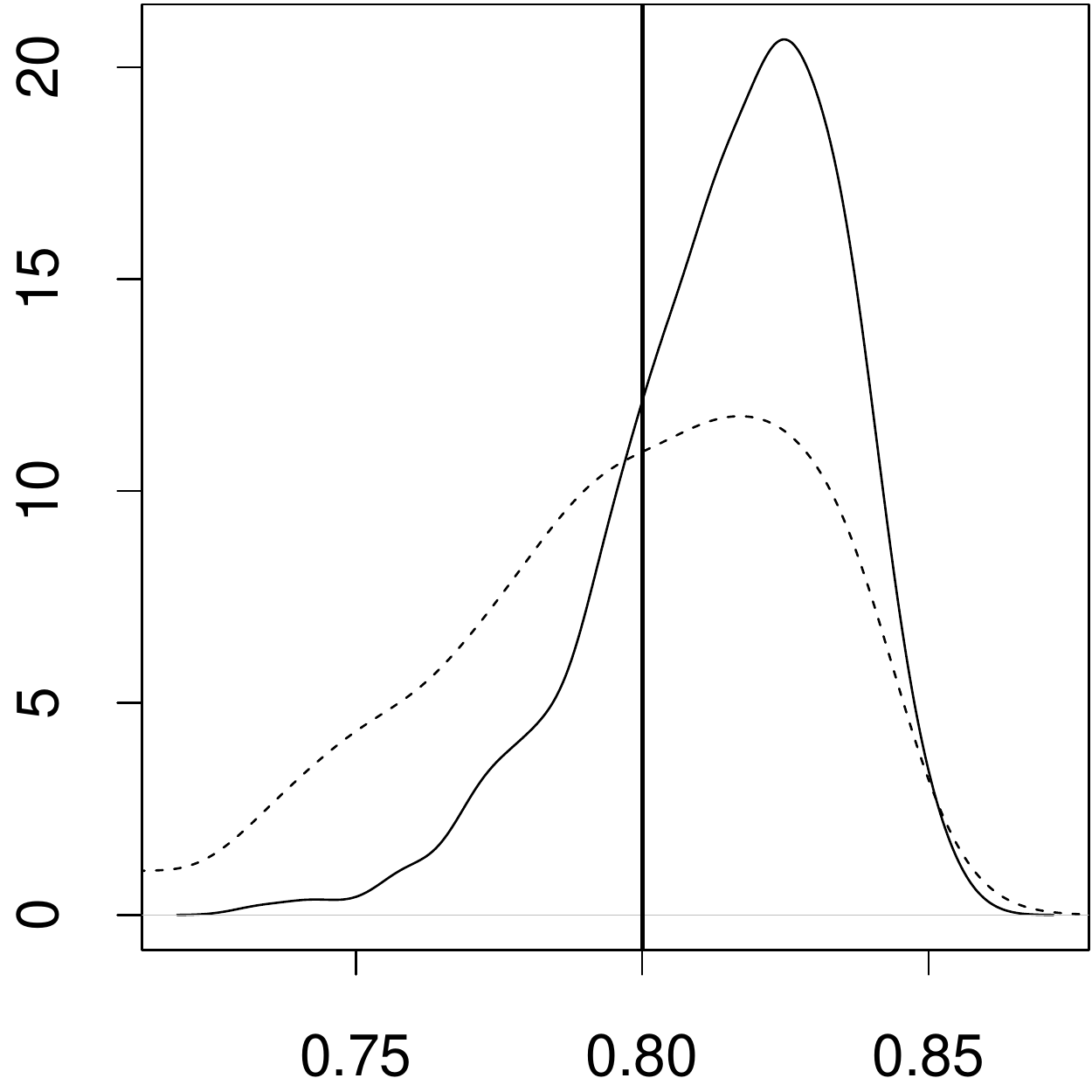}
  \caption{$\alpha_0=0.8$; $\rho = 0.25$}
\end{subfigure}
\begin{subfigure}{.24\textwidth}
  \centering
  \includegraphics[width=\textwidth]{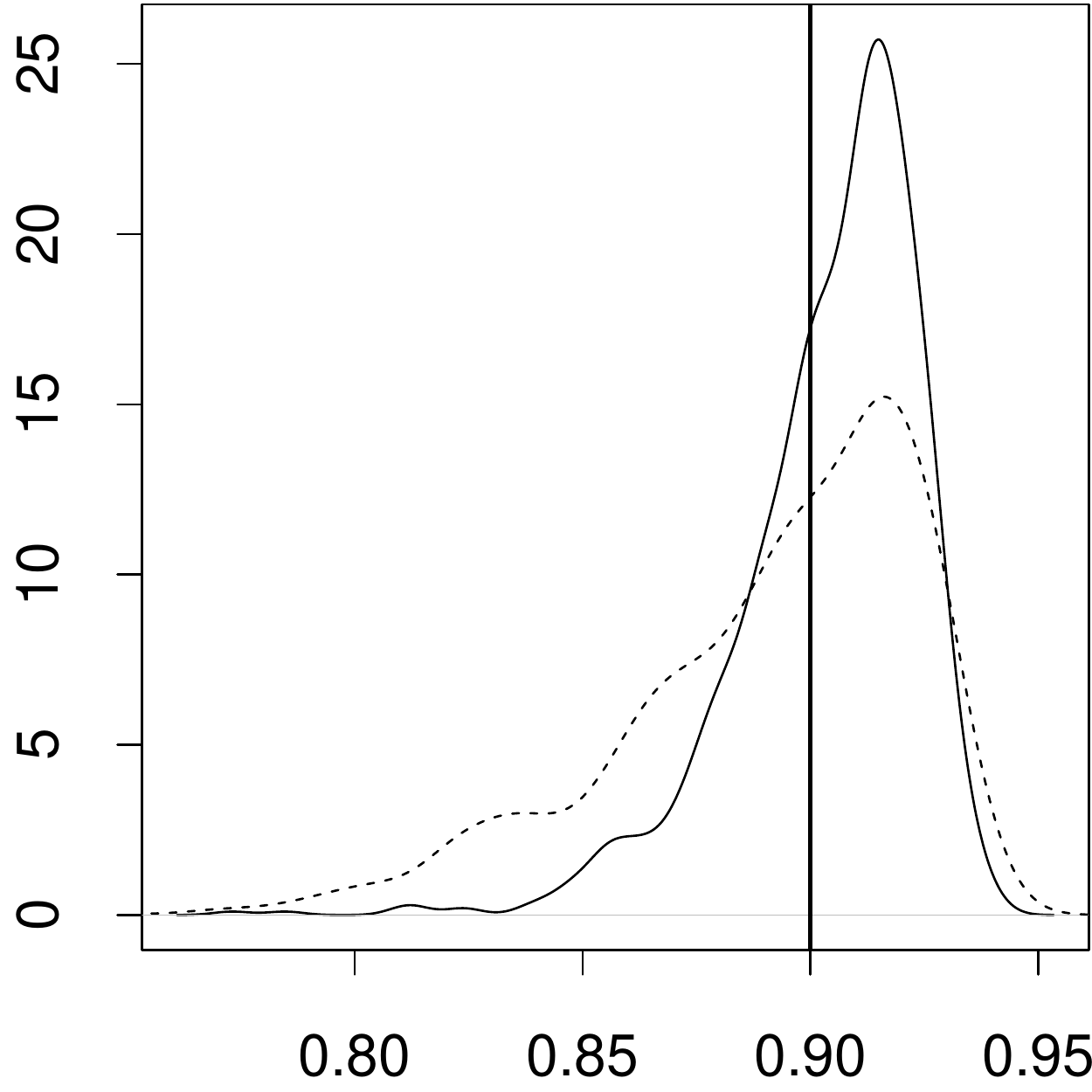}
  \caption{$\alpha_0=0.9$; $\rho = 0.25$}
\end{subfigure}
\begin{subfigure}{.24\textwidth}
  \centering
  \includegraphics[width=\textwidth]{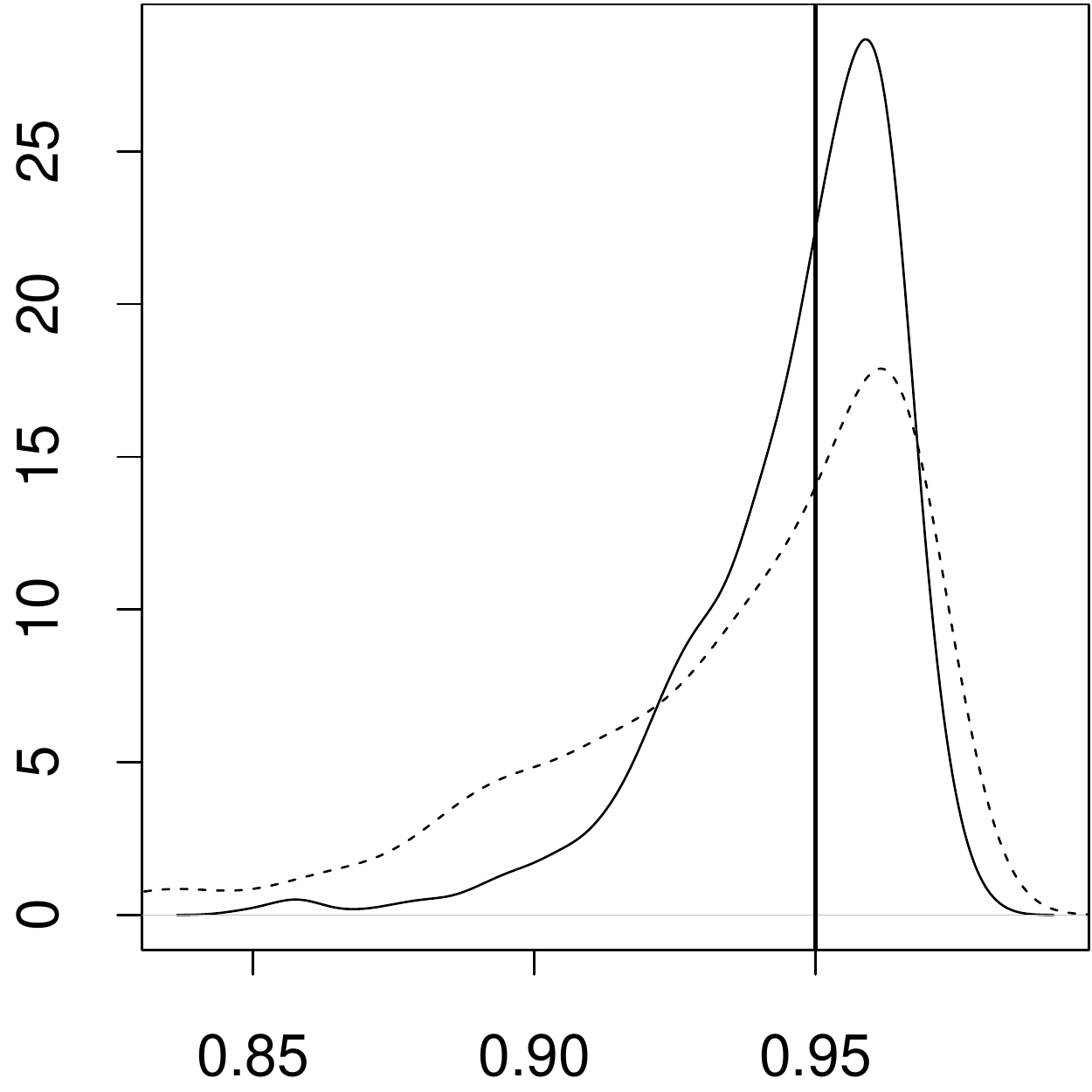}
  \caption{$\alpha_0=0.95$; $\rho = 0.25$}
\end{subfigure}

\begin{subfigure}{.24\textwidth}
  \centering
  \includegraphics[width=\textwidth]{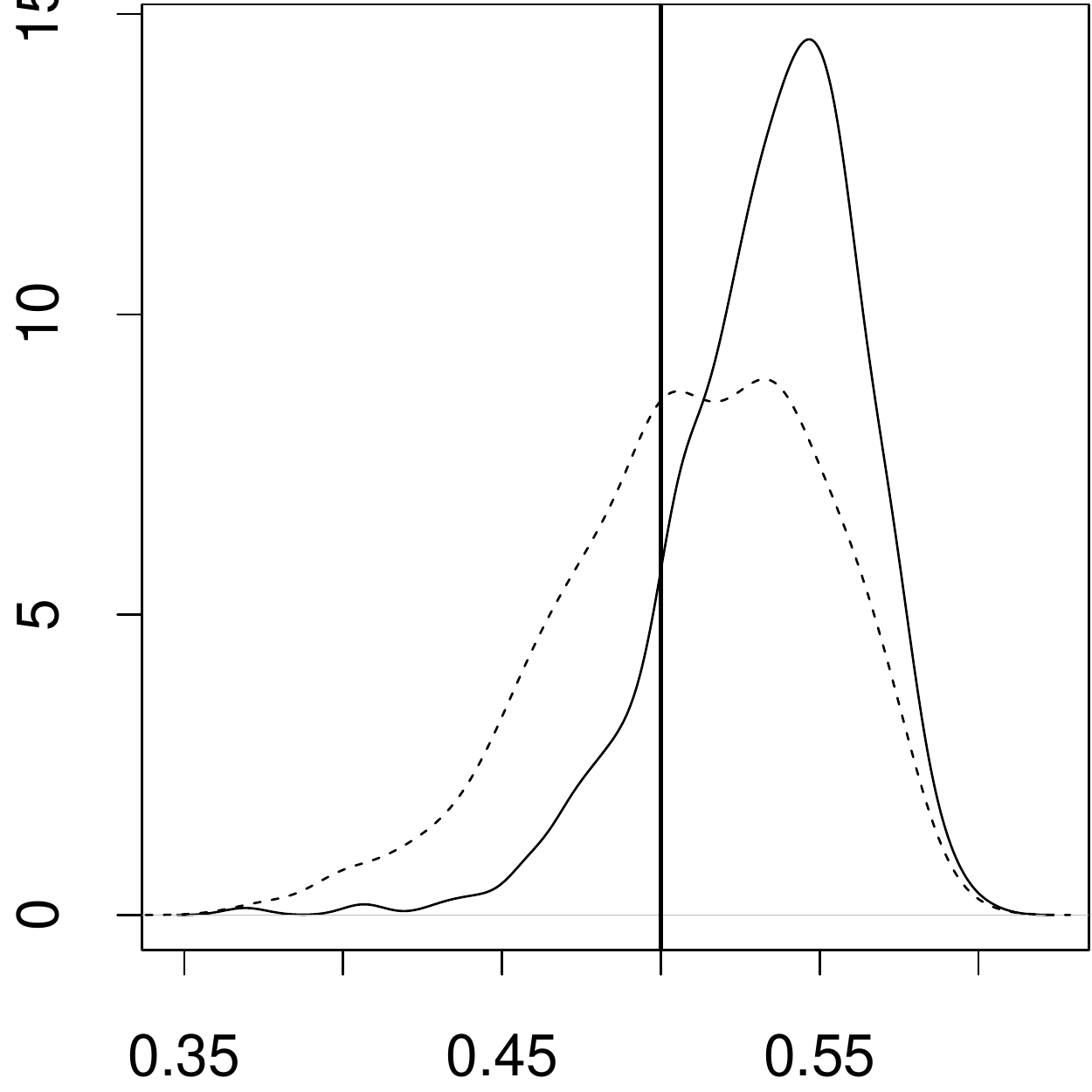}
  \caption{$\alpha_0=0.5$; $\rho = 0.5$}
\end{subfigure}
\begin{subfigure}{.24\textwidth}
  \centering
  \includegraphics[width=\textwidth]{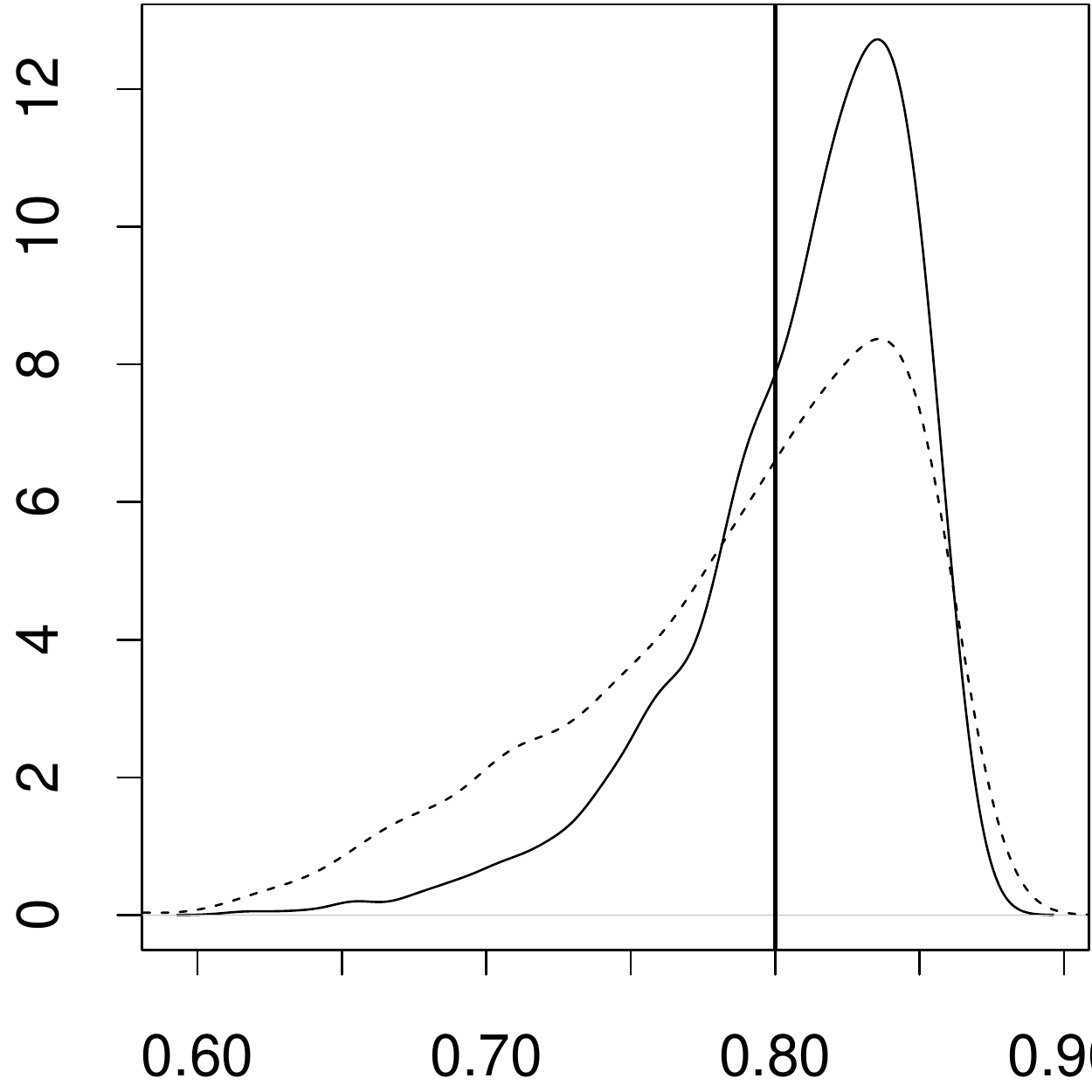}
  \caption{$\alpha_0=0.8$; $\rho = 0.5$}
\end{subfigure}
\begin{subfigure}{.24\textwidth}
  \centering
  \includegraphics[width=\textwidth]{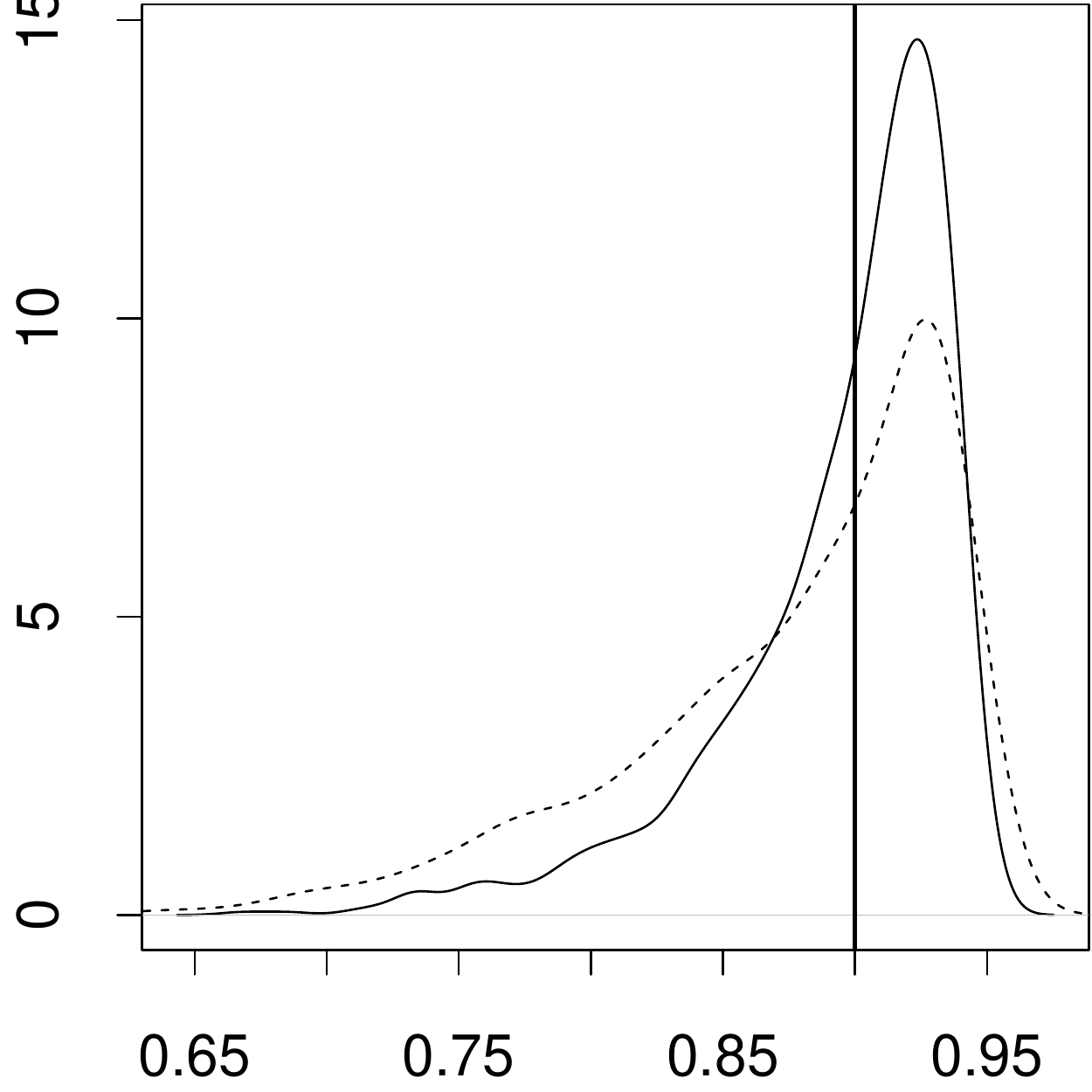}
  \caption{$\alpha_0=0.9$; $\rho = 0.5$}
\end{subfigure}
\begin{subfigure}{.24\textwidth}
  \centering
  \includegraphics[width=\textwidth]{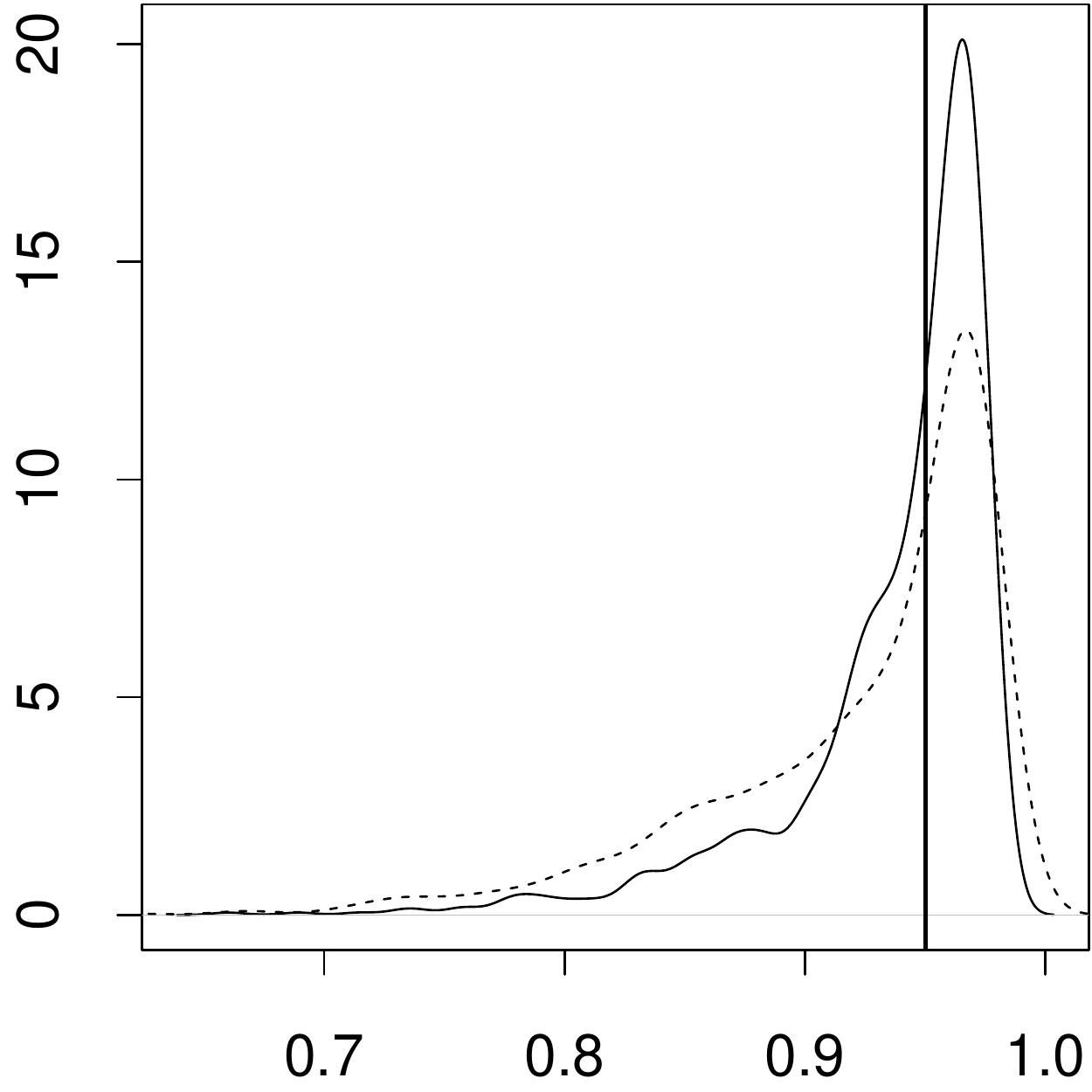}
  \caption{$\alpha_0=0.95$; $\rho = 0.5$}
\end{subfigure}

\begin{subfigure}{.24\textwidth}
  \centering
  \includegraphics[width=\textwidth]{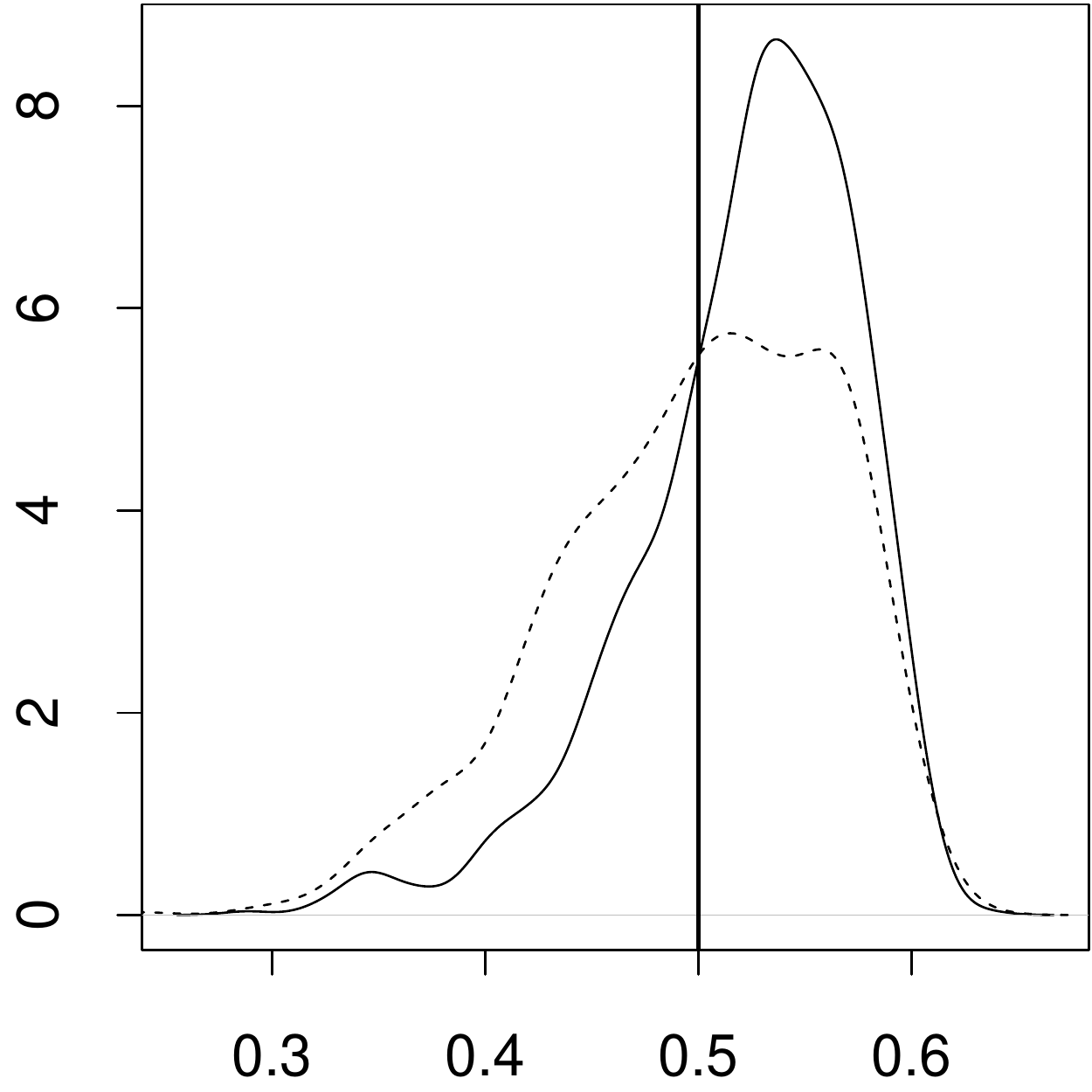}
  \caption{$\alpha_0=0.5$; $\rho = 0.75$}
\end{subfigure}
\begin{subfigure}{.24\textwidth}
  \centering
  \includegraphics[width=\textwidth]{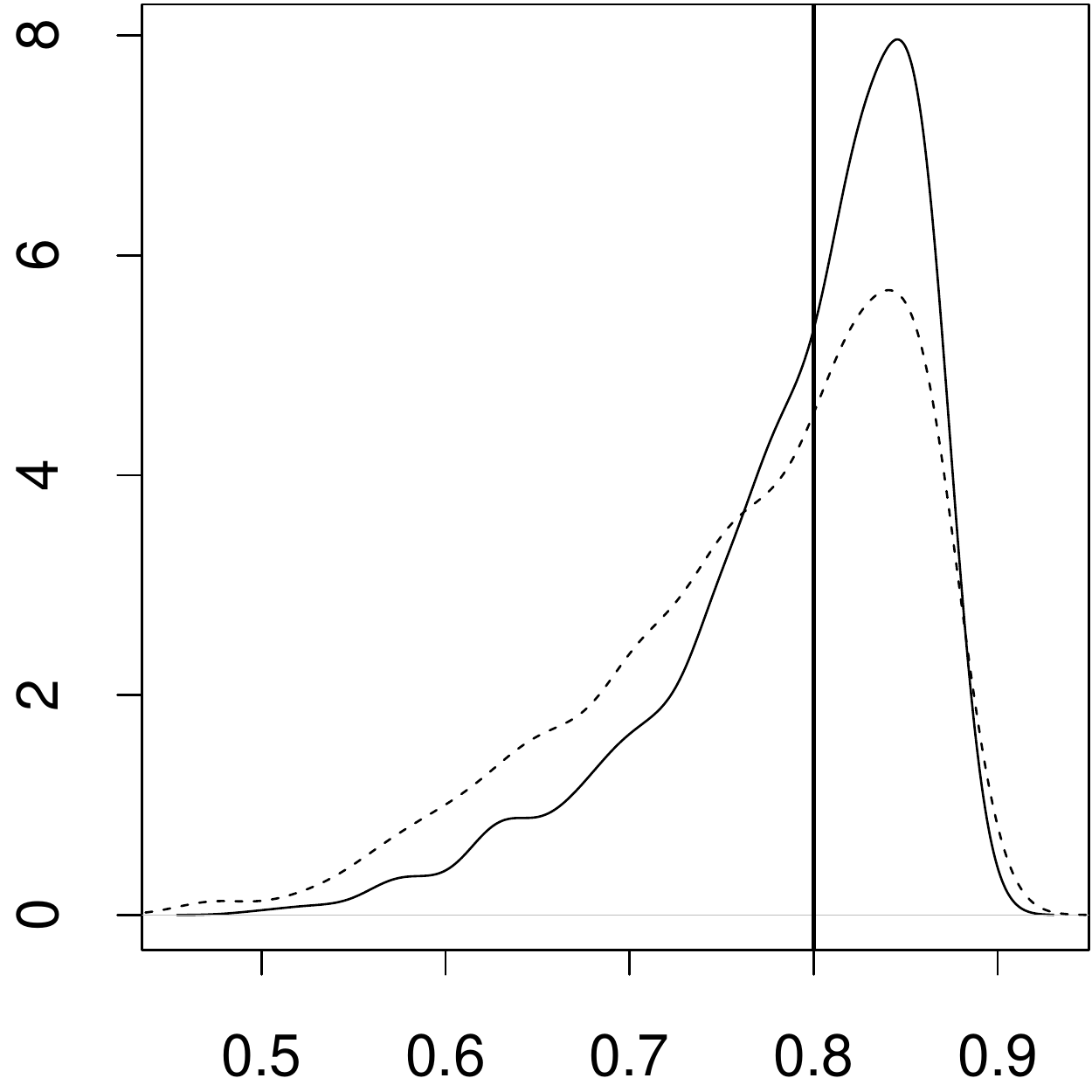}
  \caption{$\alpha_0=0.8$; $\rho = 0.75$}
\end{subfigure}
\begin{subfigure}{.24\textwidth}
  \centering
  \includegraphics[width=\textwidth]{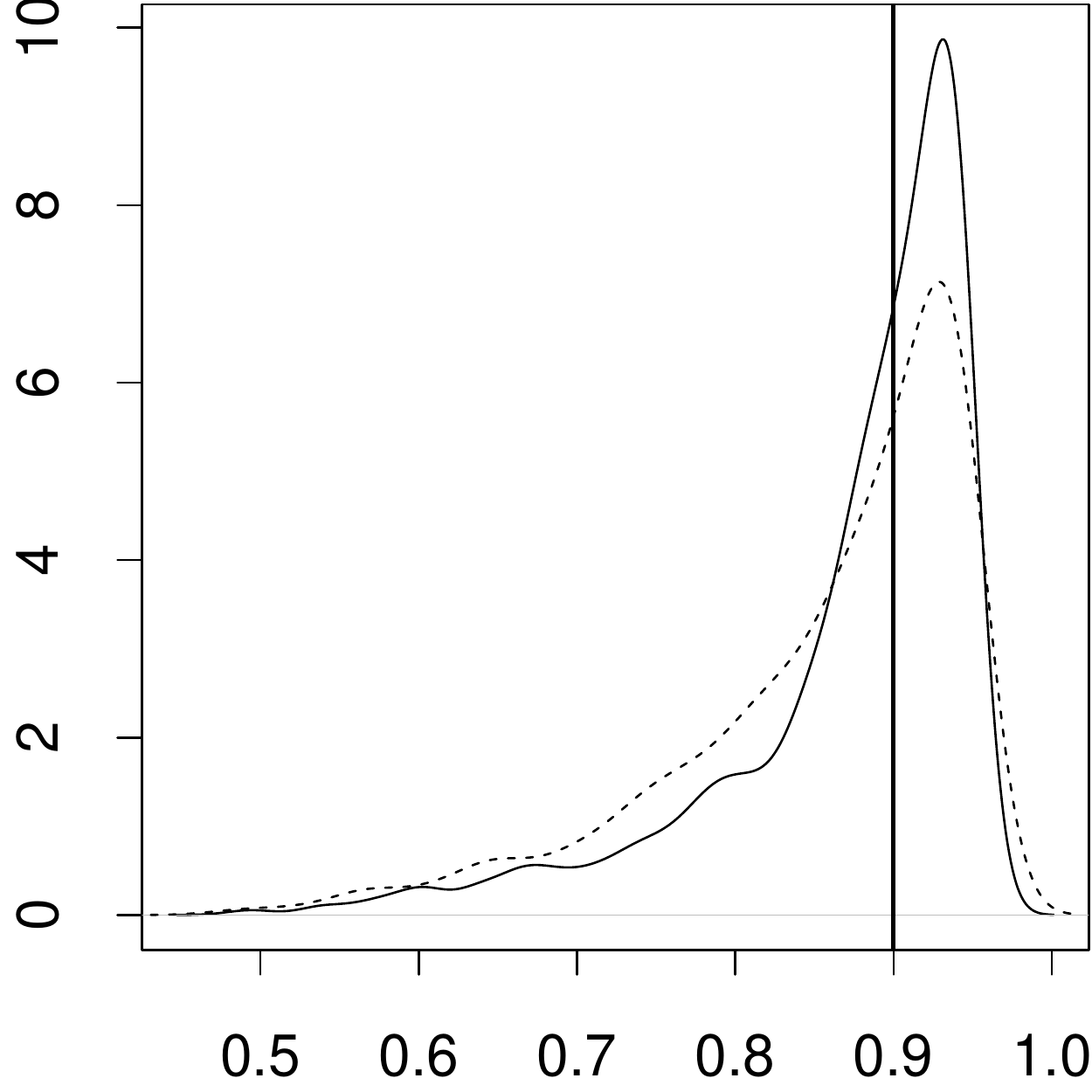}
  \caption{$\alpha_0=0.9$; $\rho = 0.75$}
\end{subfigure}
\begin{subfigure}{.24\textwidth}
  \centering
  \includegraphics[width=\textwidth]{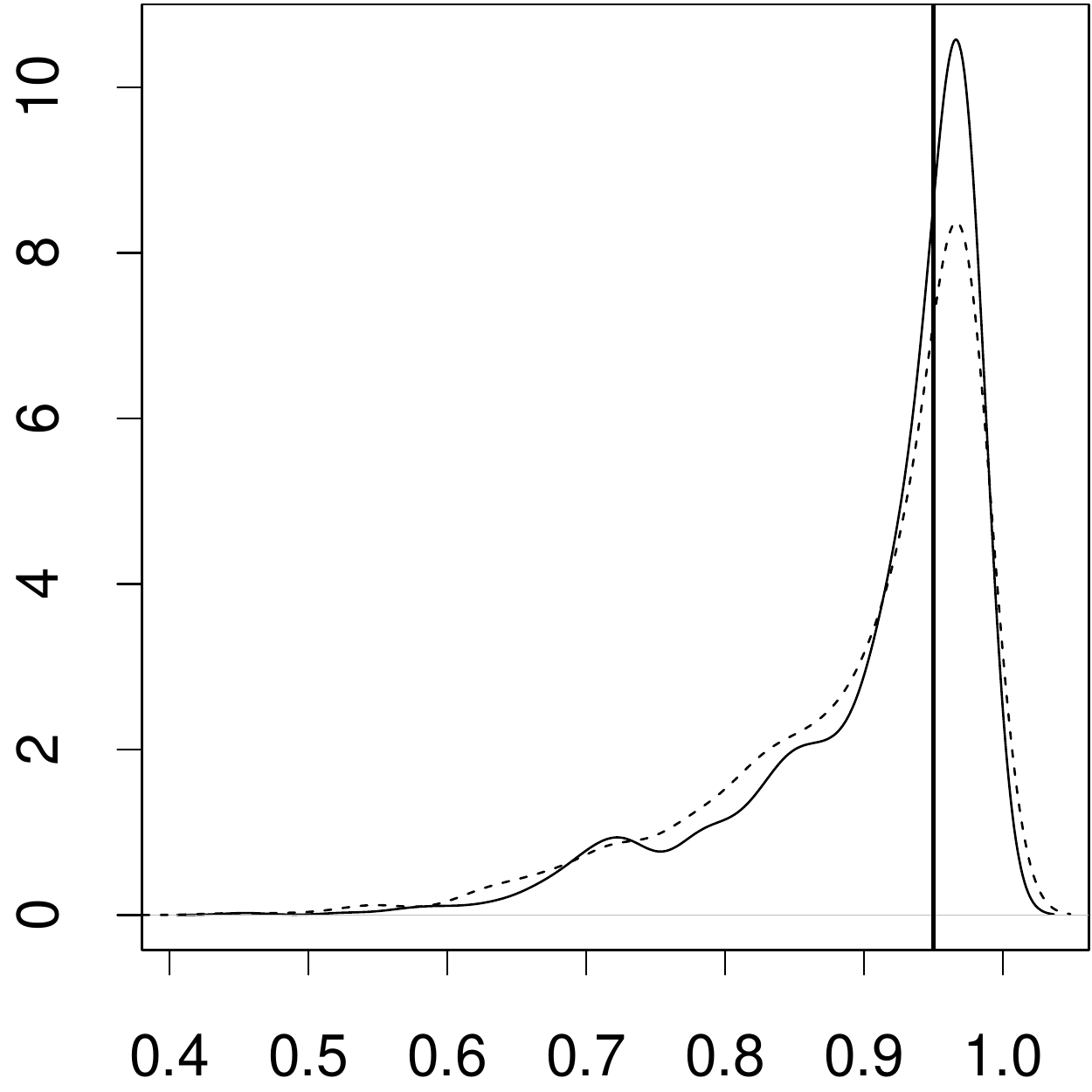}
  \caption{$\alpha_0=0.95$; $\rho = 0.75$}
\end{subfigure}
\caption{Density plots of the estimated $\alpha_0$ for $p$-values from two-sided t-tests ($G = 100$). The vertical line indicates the true $\alpha_0$, which is also marked below each figure, along with the within-group correlation coefficient. The solid lines are the densities of the posterior mean of $\alpha_0$, and the dashed lines are the densities of estimated $\alpha_0$ by fitting a convex decreasing density.}
\label{fig:g100}
\end{figure}

\begin{figure}[ht]
\captionsetup[subfigure]{labelformat=empty}
\centering
\begin{subfigure}{.24\textwidth}
  \centering
  \includegraphics[width=\textwidth]{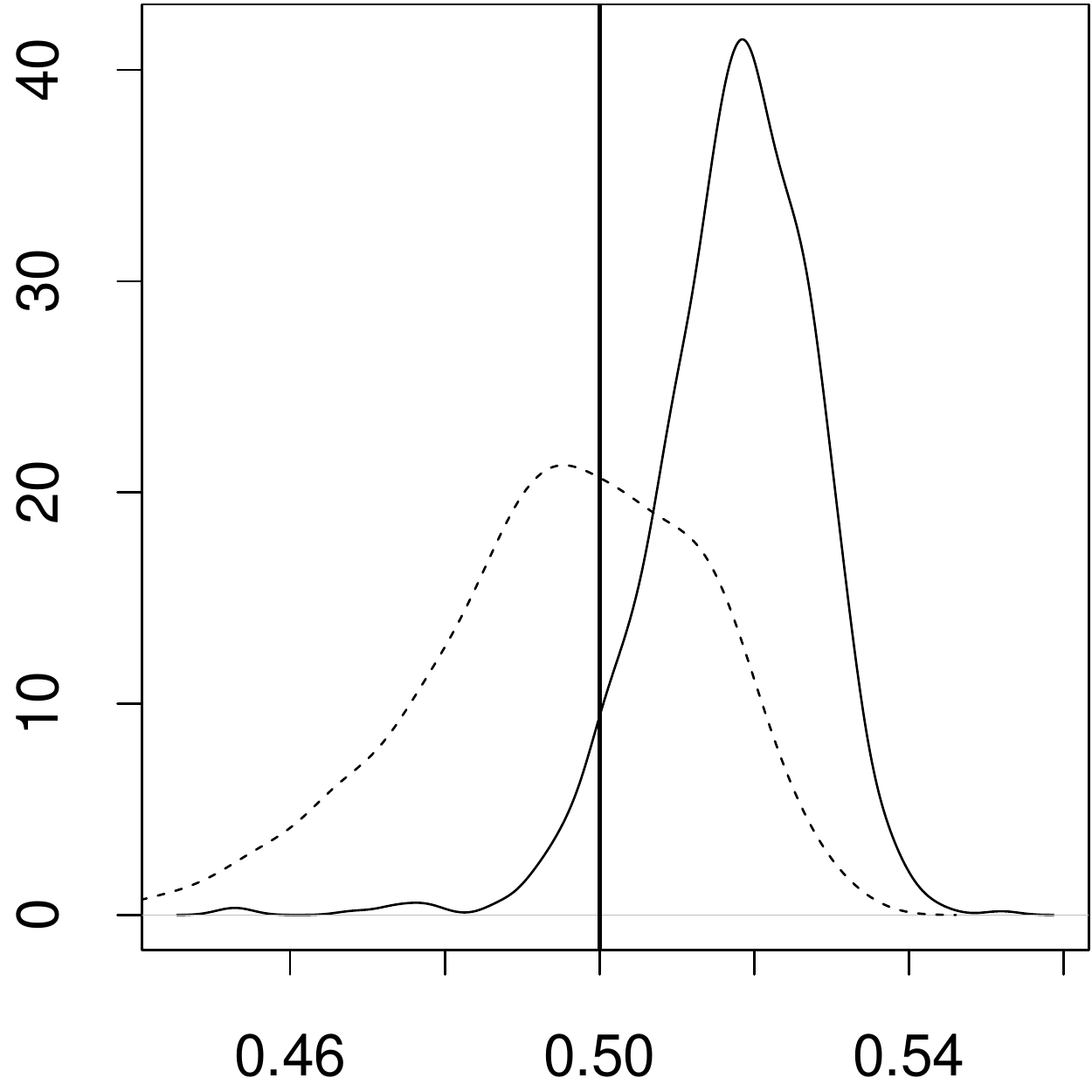}
  \caption{$\alpha_0=0.5$; $\rho = 0$}
\end{subfigure}
\begin{subfigure}{.24\textwidth}
  \centering
  \includegraphics[width=\textwidth]{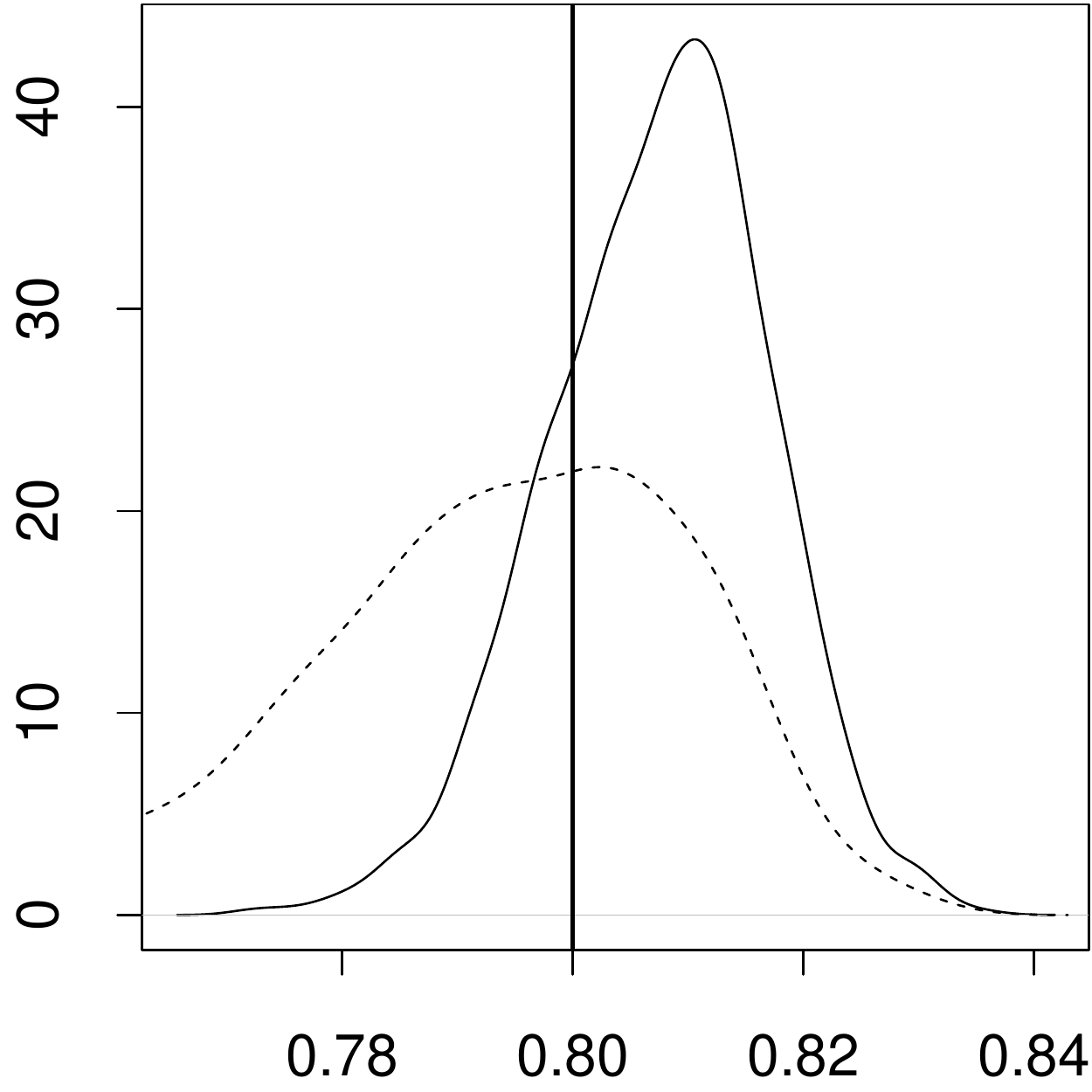}
  \caption{$\alpha_0=0.8$; $\rho = 0$}
\end{subfigure}
\begin{subfigure}{.24\textwidth}
  \centering
  \includegraphics[width=\textwidth]{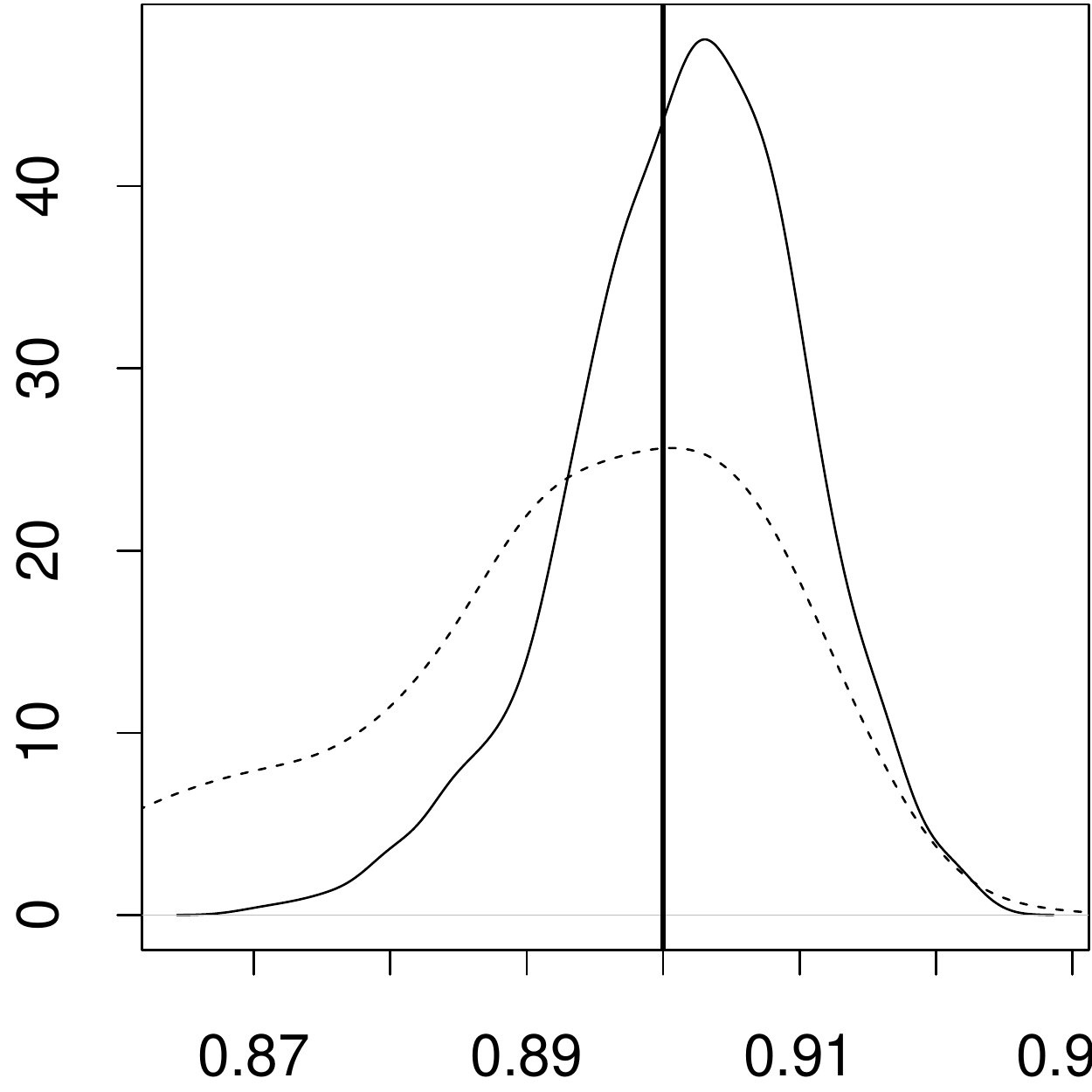}
  \caption{$\alpha_0=0.9$; $\rho = 0$}
\end{subfigure}
\begin{subfigure}{.24\textwidth}
  \centering
  \includegraphics[width=\textwidth]{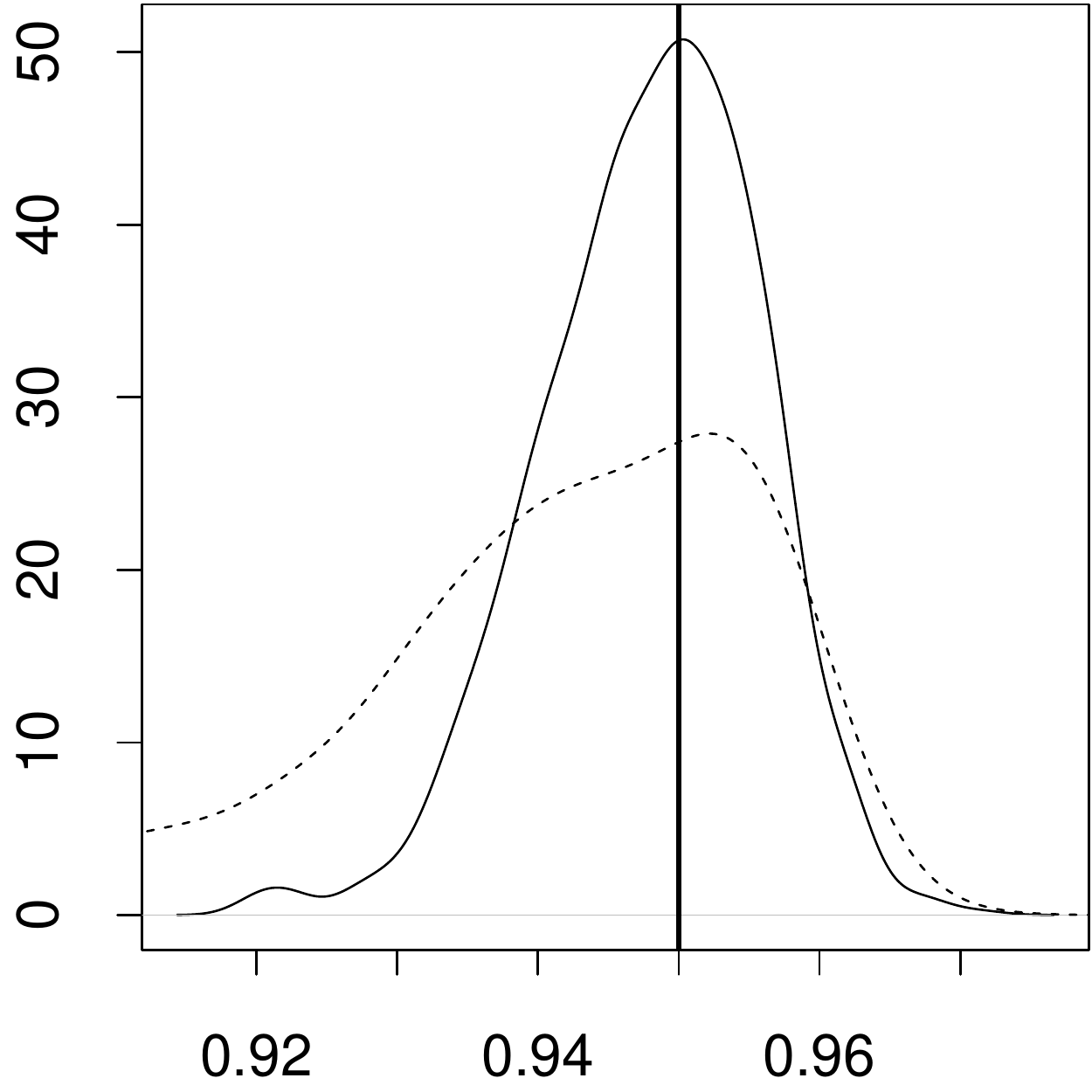}
  \caption{$\alpha_0=0.95$; $\rho = 0$}
\end{subfigure}

\begin{subfigure}{.24\textwidth}
  \centering
  \includegraphics[width=\textwidth]{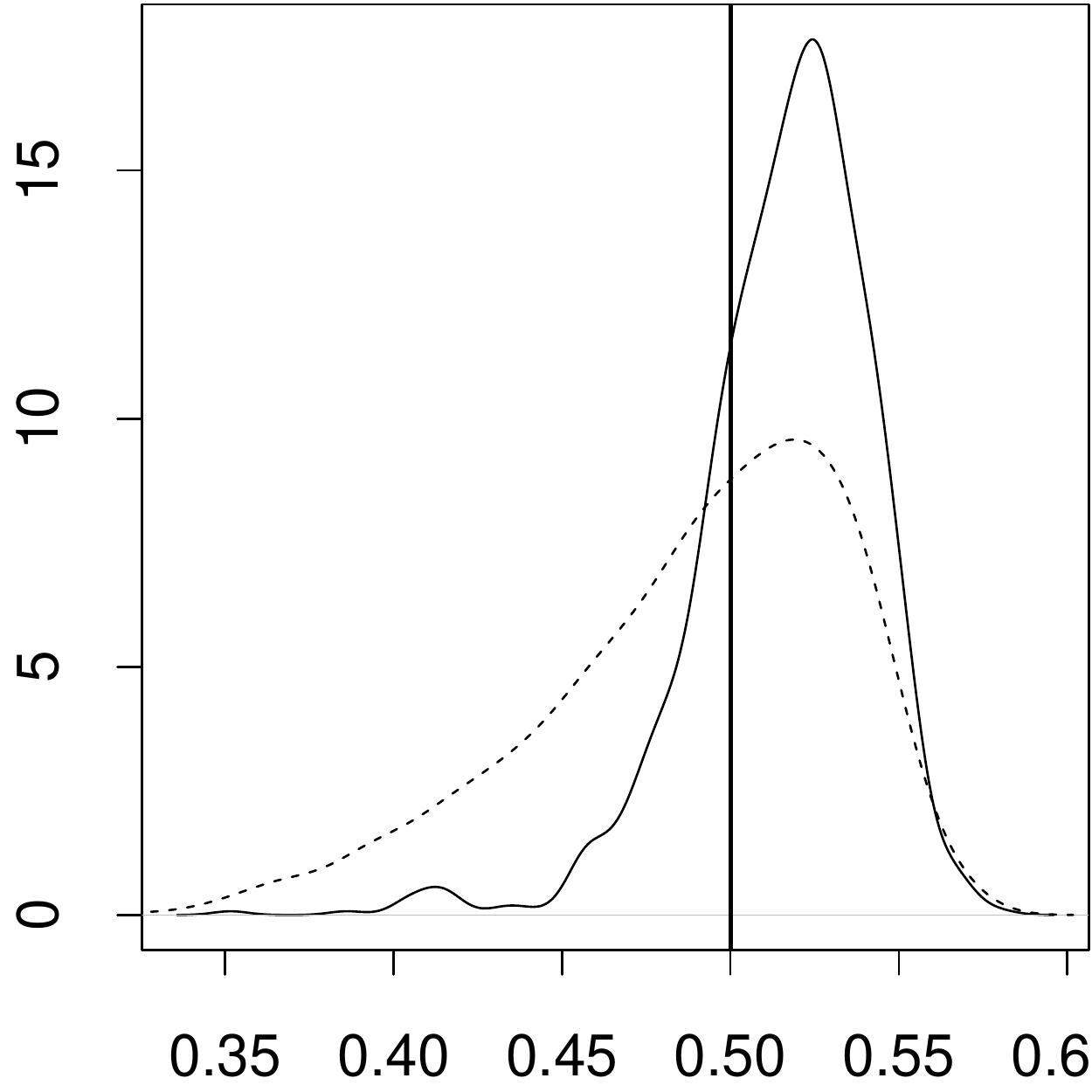}
  \caption{$\alpha_0=0.5$; $\rho = 0.25$}
\end{subfigure}
\begin{subfigure}{.24\textwidth}
  \centering
  \includegraphics[width=\textwidth]{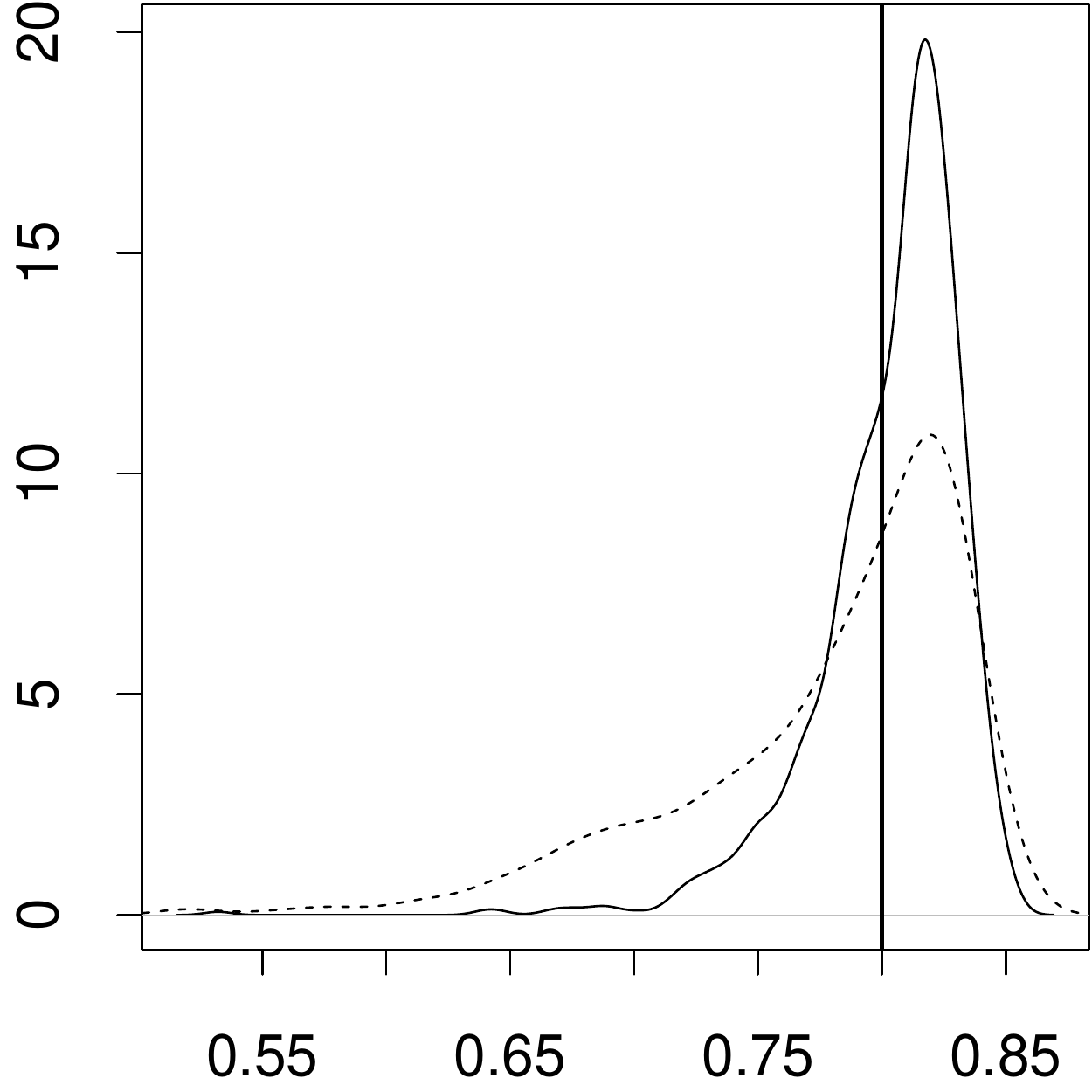}
  \caption{$\alpha_0=0.8$; $\rho = 0.25$}
\end{subfigure}
\begin{subfigure}{.24\textwidth}
  \centering
  \includegraphics[width=\textwidth]{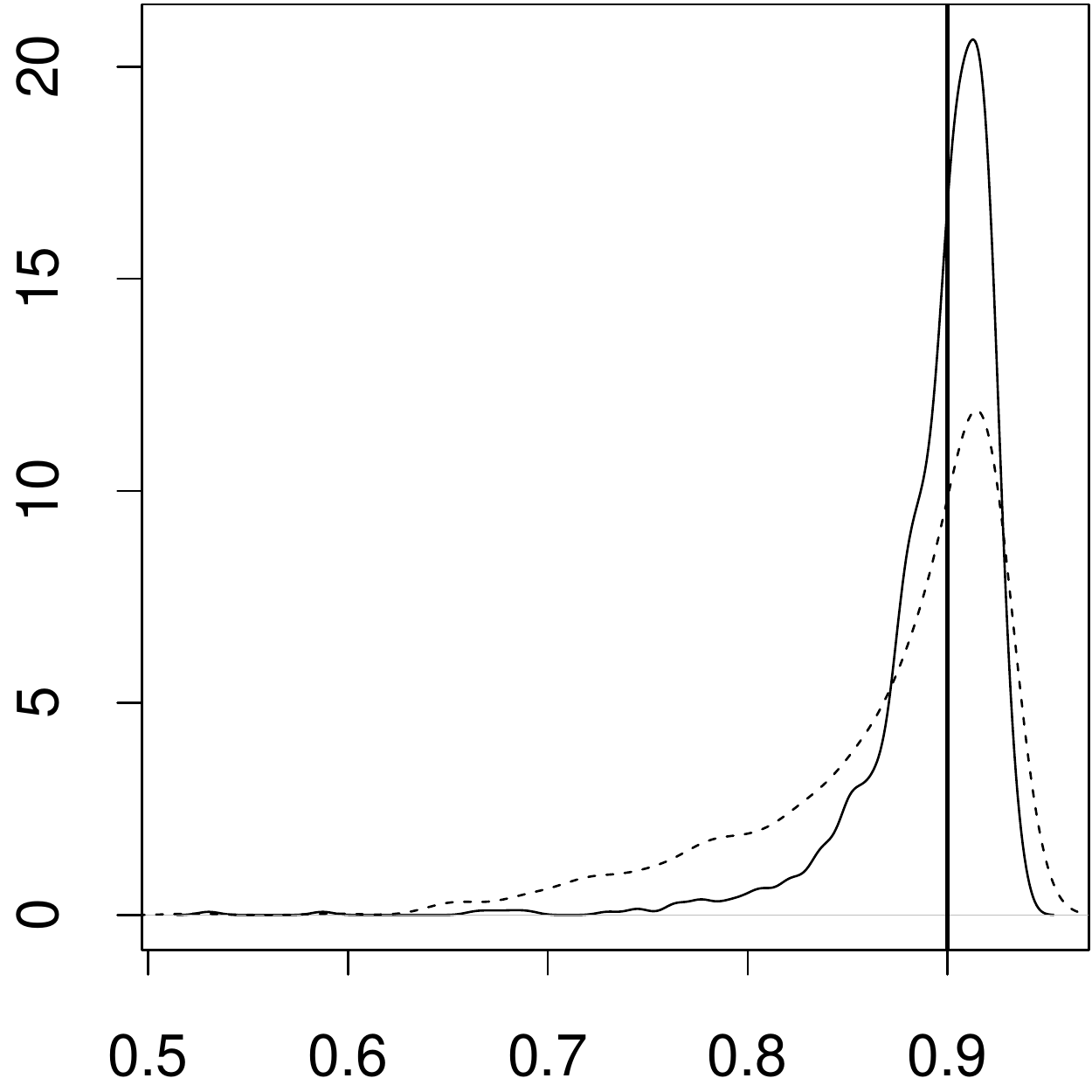}
  \caption{$\alpha_0=0.9$; $\rho = 0.25$}
\end{subfigure}
\begin{subfigure}{.24\textwidth}
  \centering
  \includegraphics[width=\textwidth]{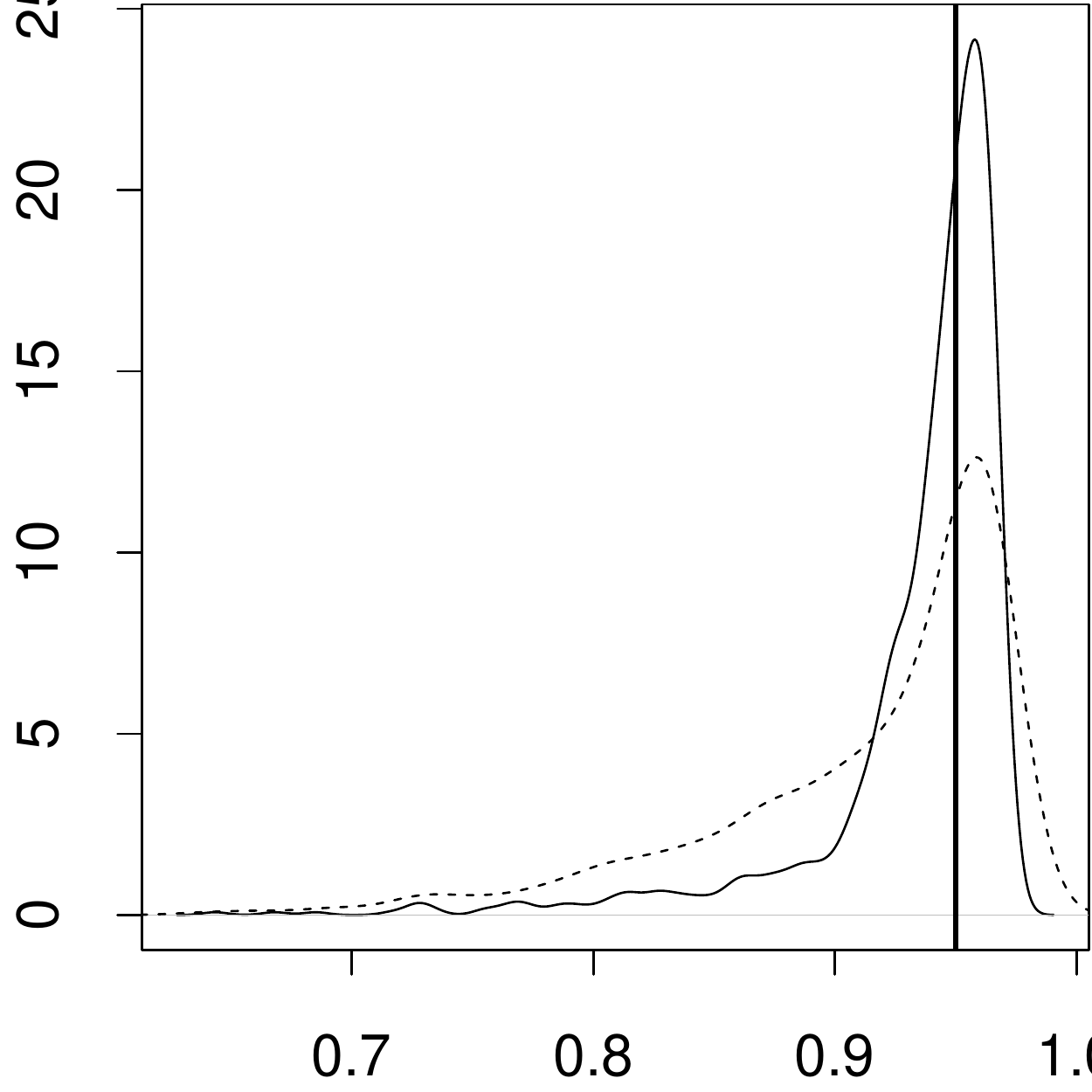}
  \caption{$\alpha_0=0.95$; $\rho = 0.25$}
\end{subfigure}

\begin{subfigure}{.24\textwidth}
  \centering
  \includegraphics[width=\textwidth]{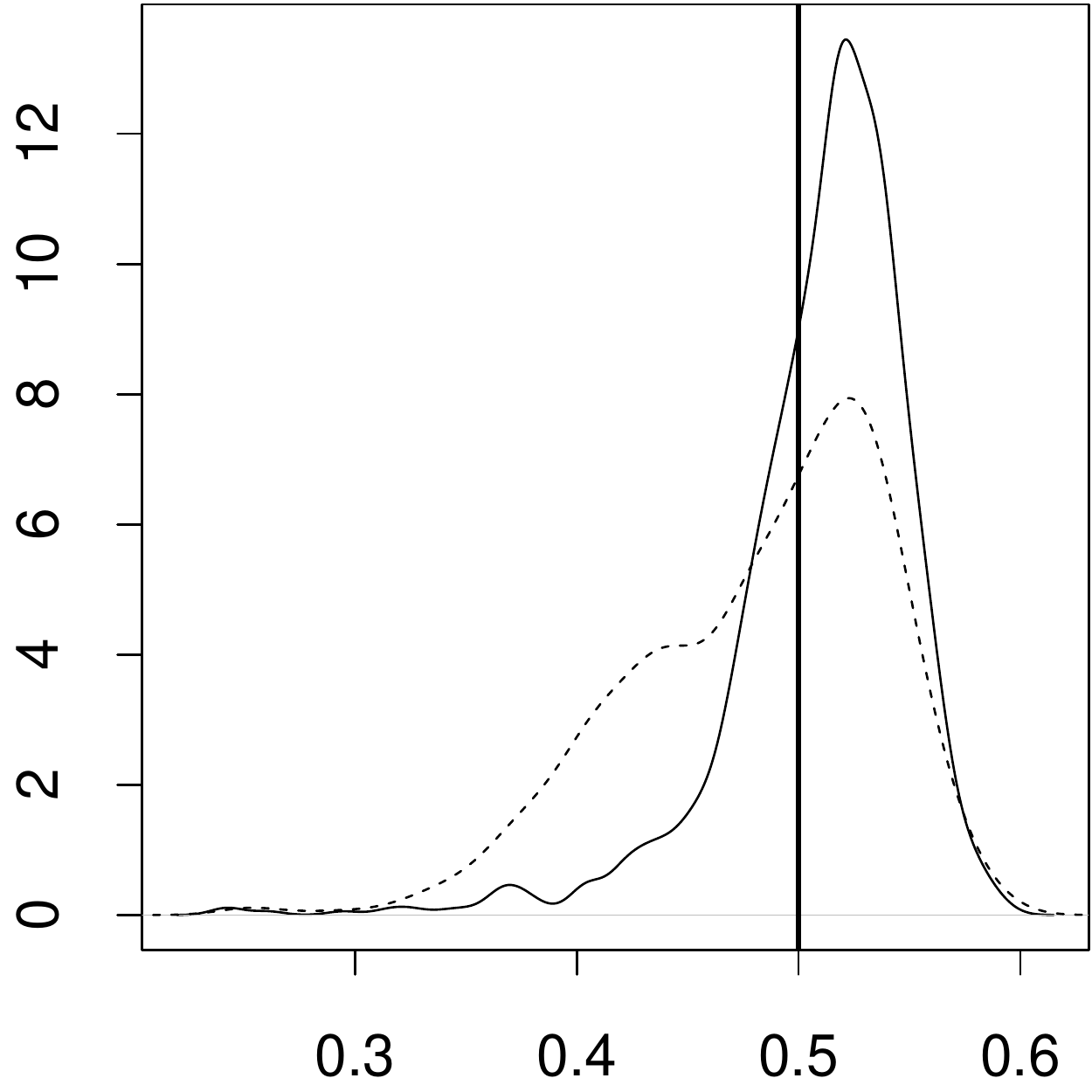}
  \caption{$\alpha_0=0.5$; $\rho = 0.5$}
\end{subfigure}
\begin{subfigure}{.24\textwidth}
  \centering
  \includegraphics[width=\textwidth]{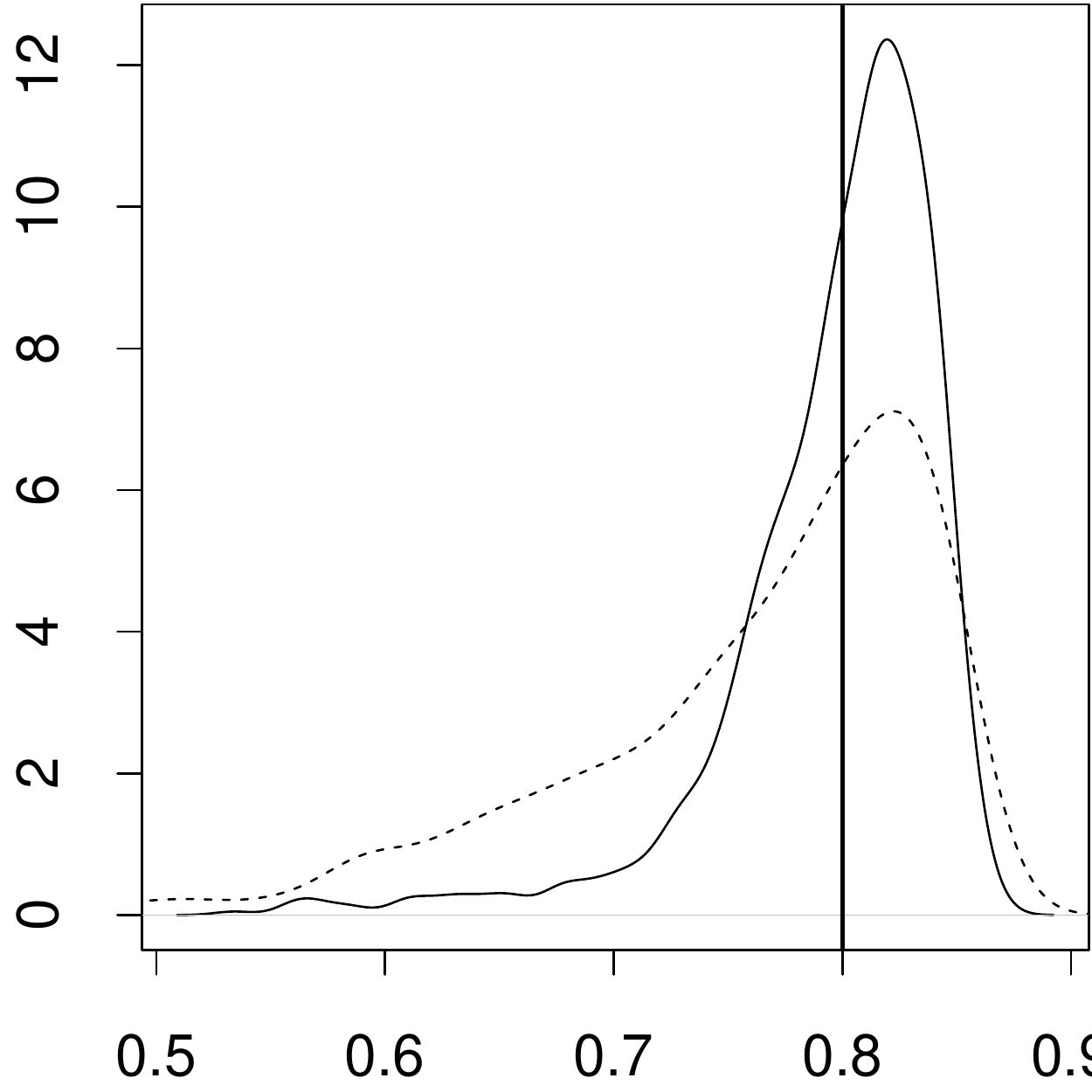}
  \caption{$\alpha_0=0.8$; $\rho = 0.5$}
\end{subfigure}
\begin{subfigure}{.24\textwidth}
  \centering
  \includegraphics[width=\textwidth]{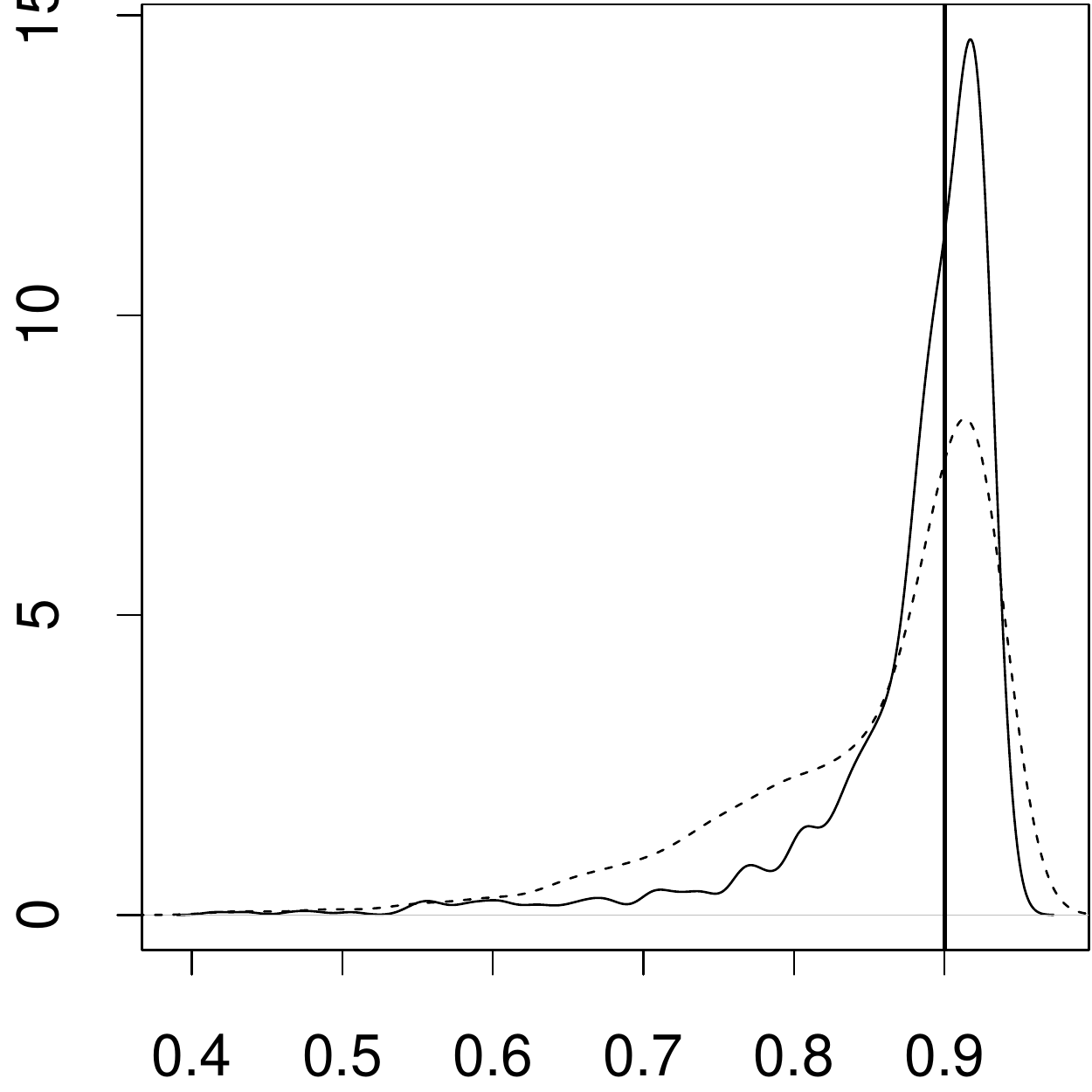}
  \caption{$\alpha_0=0.9$; $\rho = 0.5$}
\end{subfigure}
\begin{subfigure}{.24\textwidth}
  \centering
  \includegraphics[width=\textwidth]{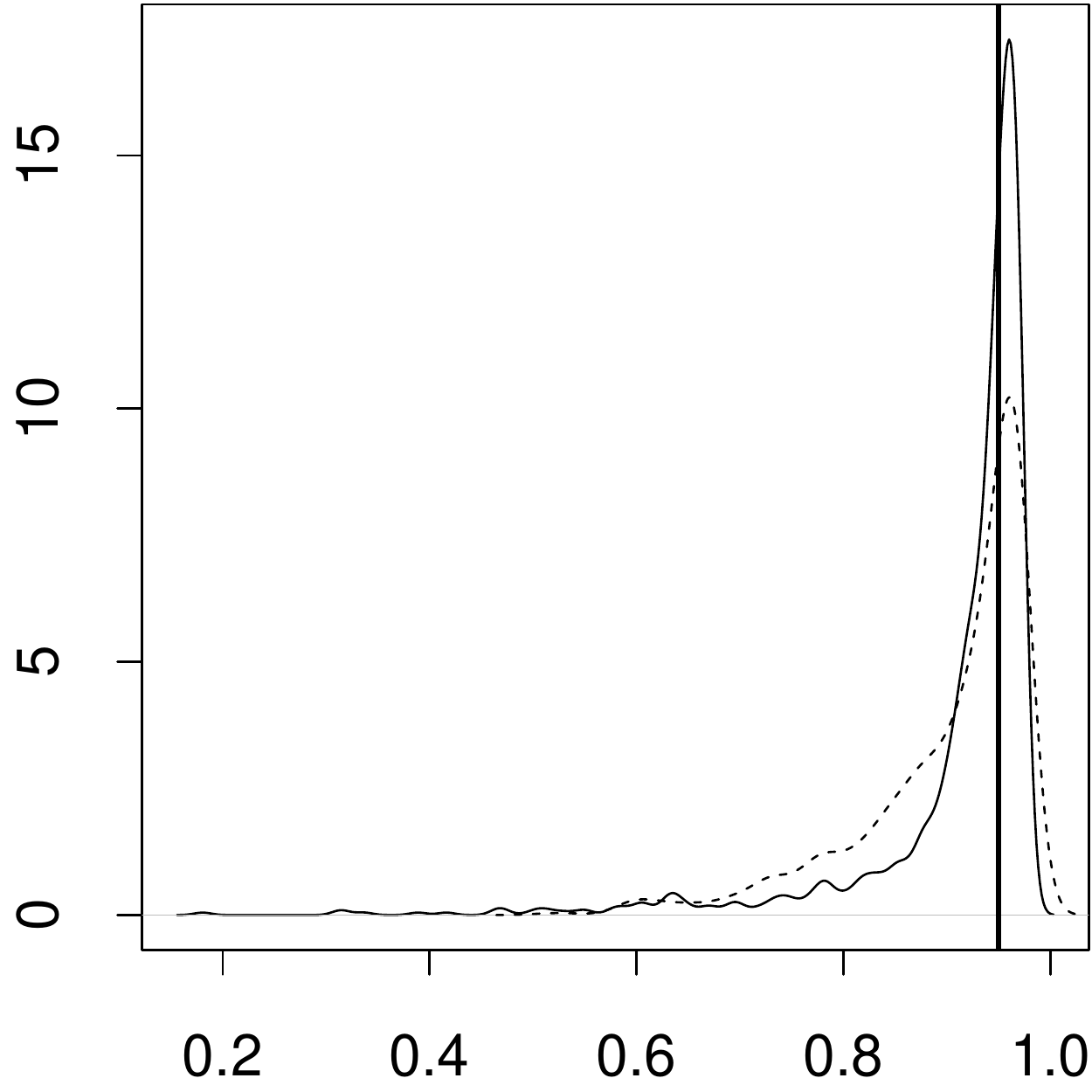}
  \caption{$\alpha_0=0.95$; $\rho = 0.5$}
\end{subfigure}

\begin{subfigure}{.24\textwidth}
  \centering
  \includegraphics[width=\textwidth]{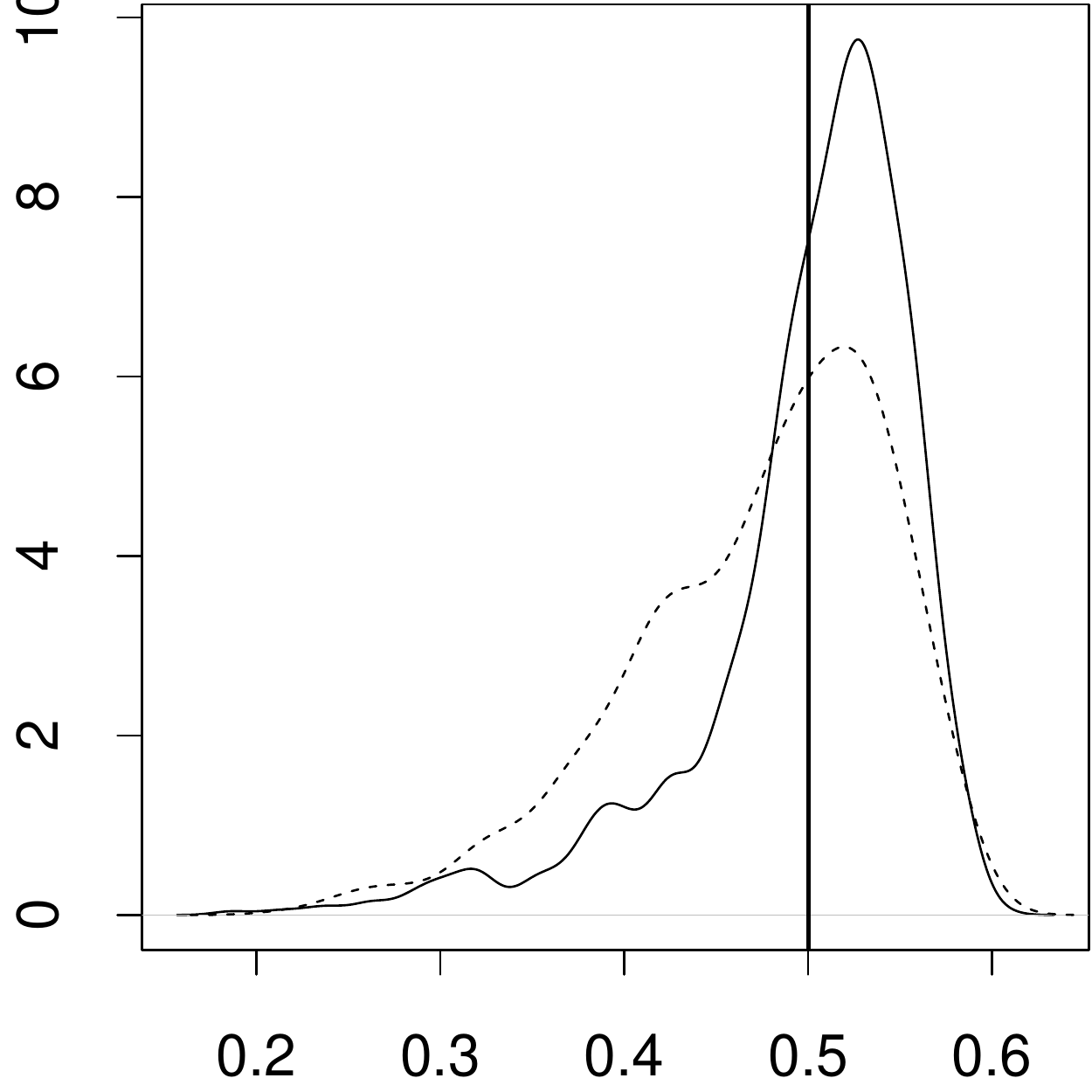}
  \caption{$\alpha_0=0.5$; $\rho = 0.75$}
\end{subfigure}
\begin{subfigure}{.24\textwidth}
  \centering
  \includegraphics[width=\textwidth]{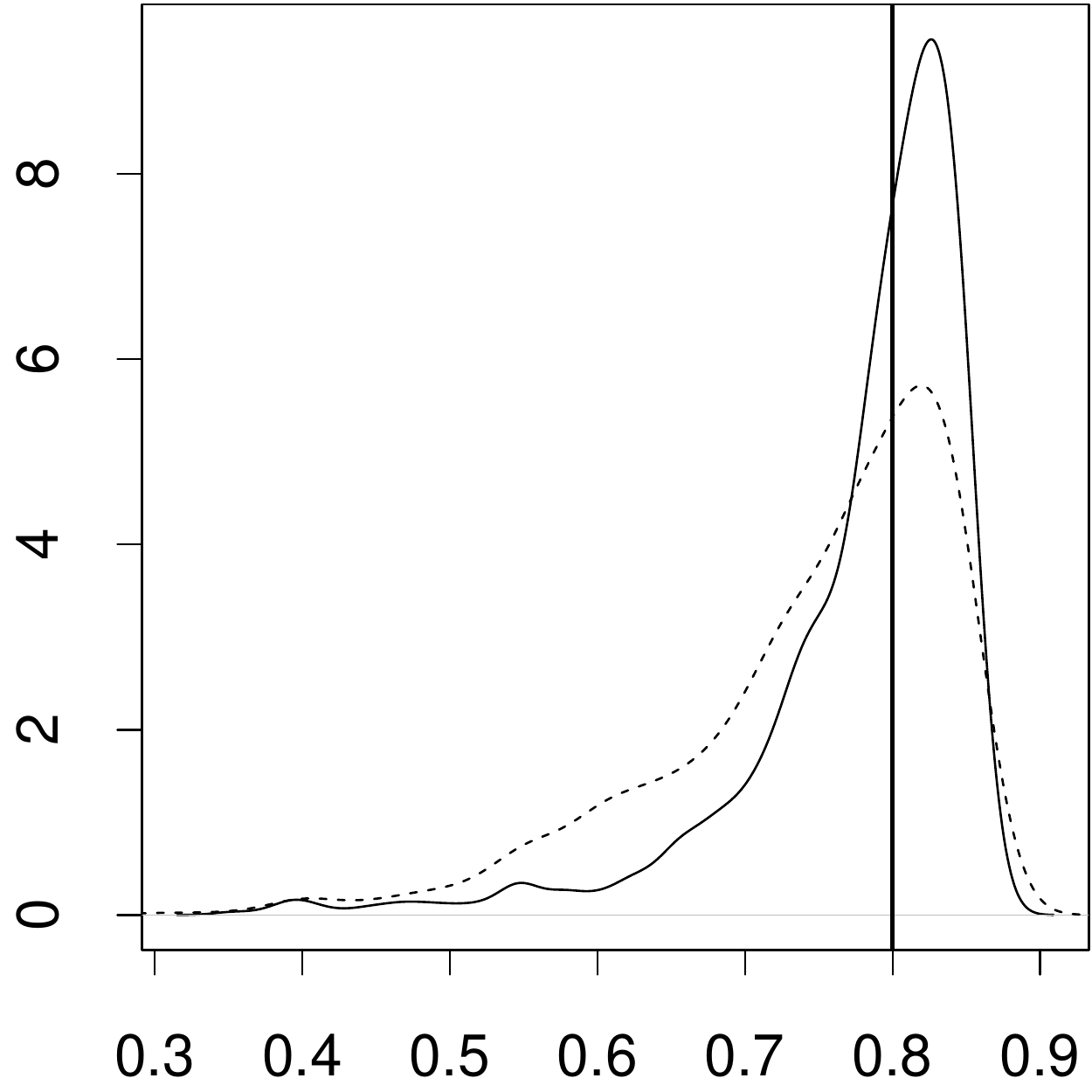}
  \caption{$\alpha_0=0.8$; $\rho = 0.75$}
\end{subfigure}
\begin{subfigure}{.24\textwidth}
  \centering
  \includegraphics[width=\textwidth]{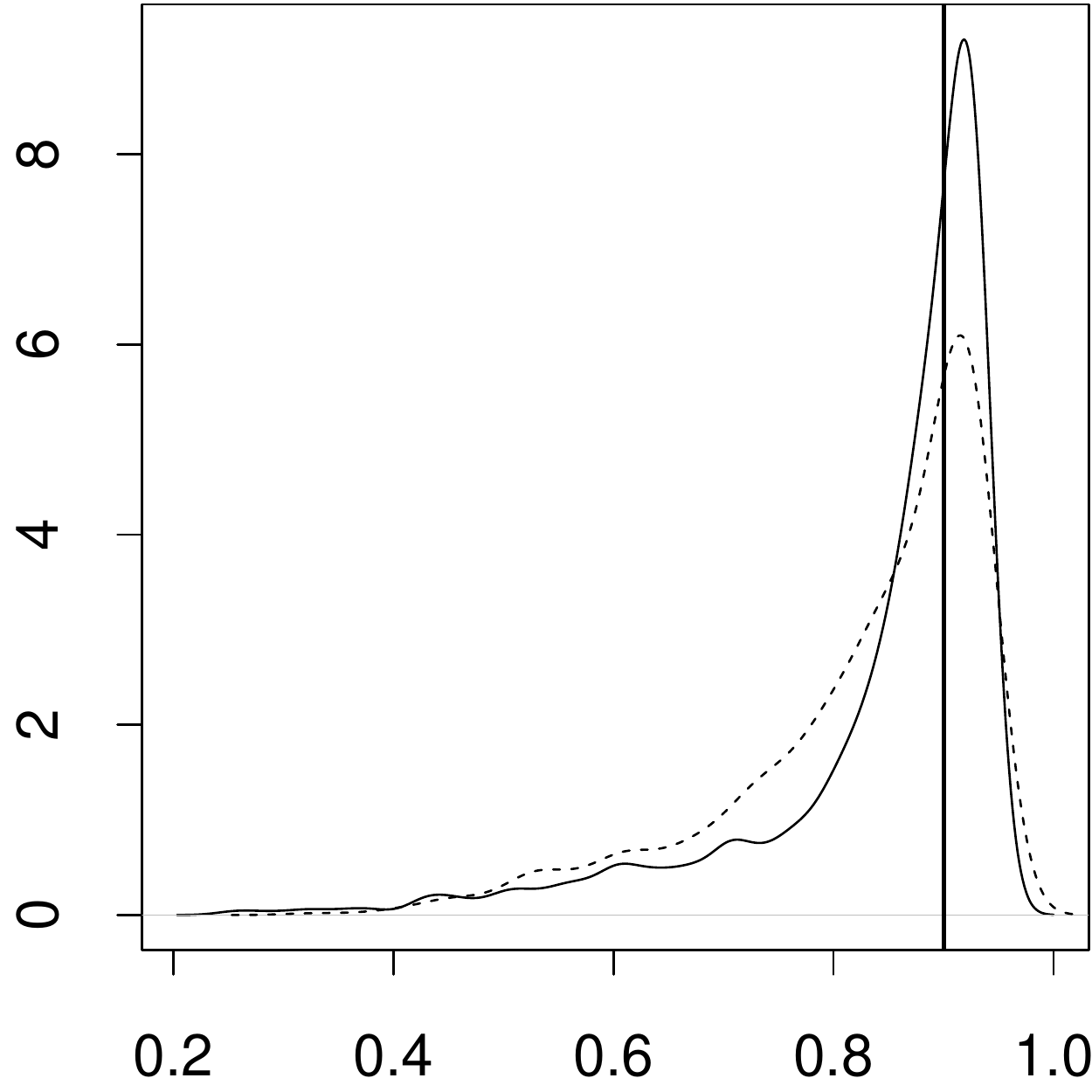}
  \caption{$\alpha_0=0.9$; $\rho = 0.75$}
\end{subfigure}
\begin{subfigure}{.24\textwidth}
  \centering
  \includegraphics[width=\textwidth]{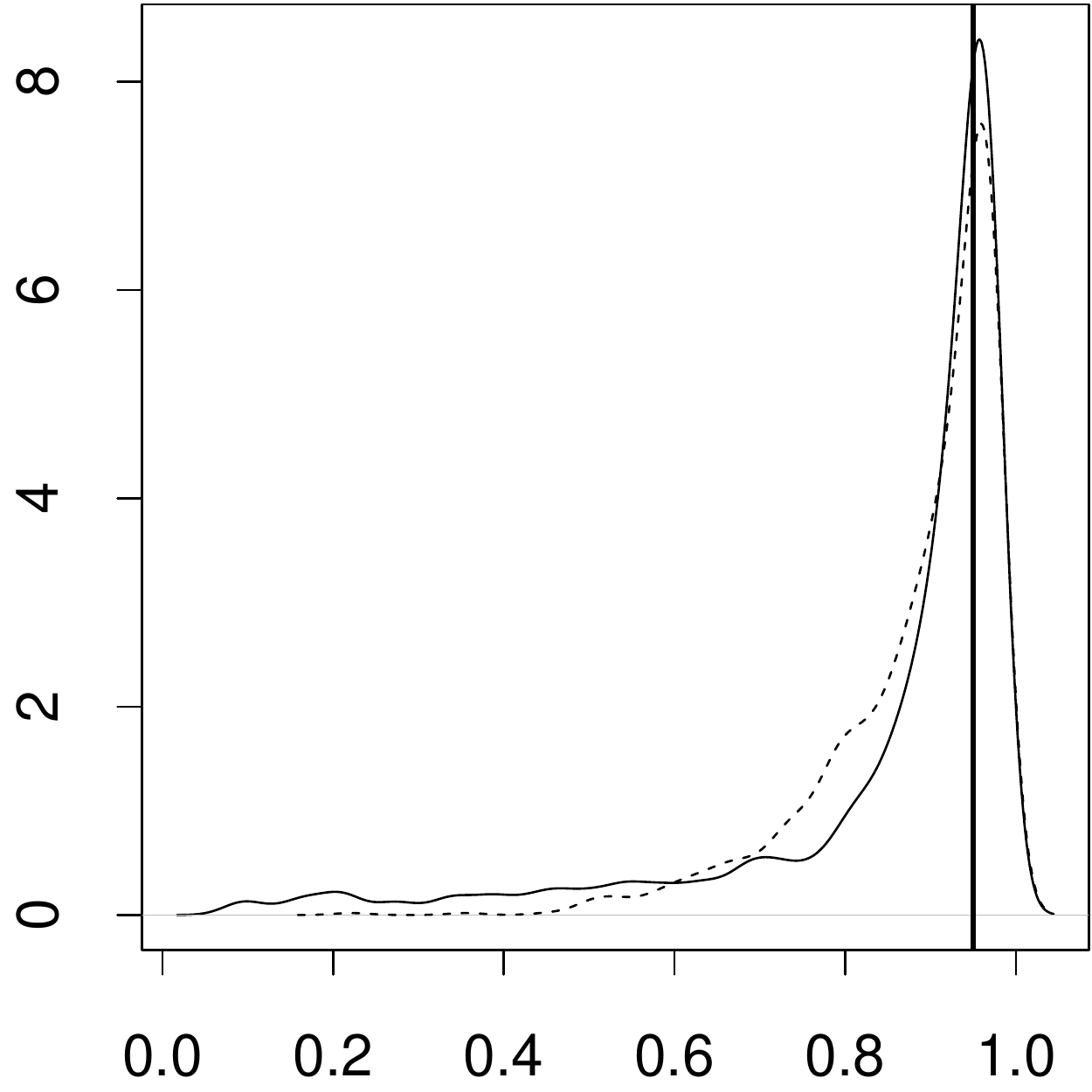}
  \caption{$\alpha_0=0.95$; $\rho = 0.75$}
\end{subfigure}
\caption{Density plots of the estimated $\alpha_0$ for $p$-values from one-sided t-tests ($G = 50$). The vertical line indicates the true $\alpha_0$, which is also marked below each figure, along with the within-group correlation coefficient. The solid lines are the densities of the posterior mean of $\alpha_0$, and the dashed lines are the densities of estimated $\alpha_0$ by fitting a convex decreasing density.}
\label{fig:osg50}
\end{figure}

\begin{figure}[ht]
\captionsetup[subfigure]{labelformat=empty}
\centering
\begin{subfigure}{.24\textwidth}
  \centering
  \includegraphics[width=\textwidth]{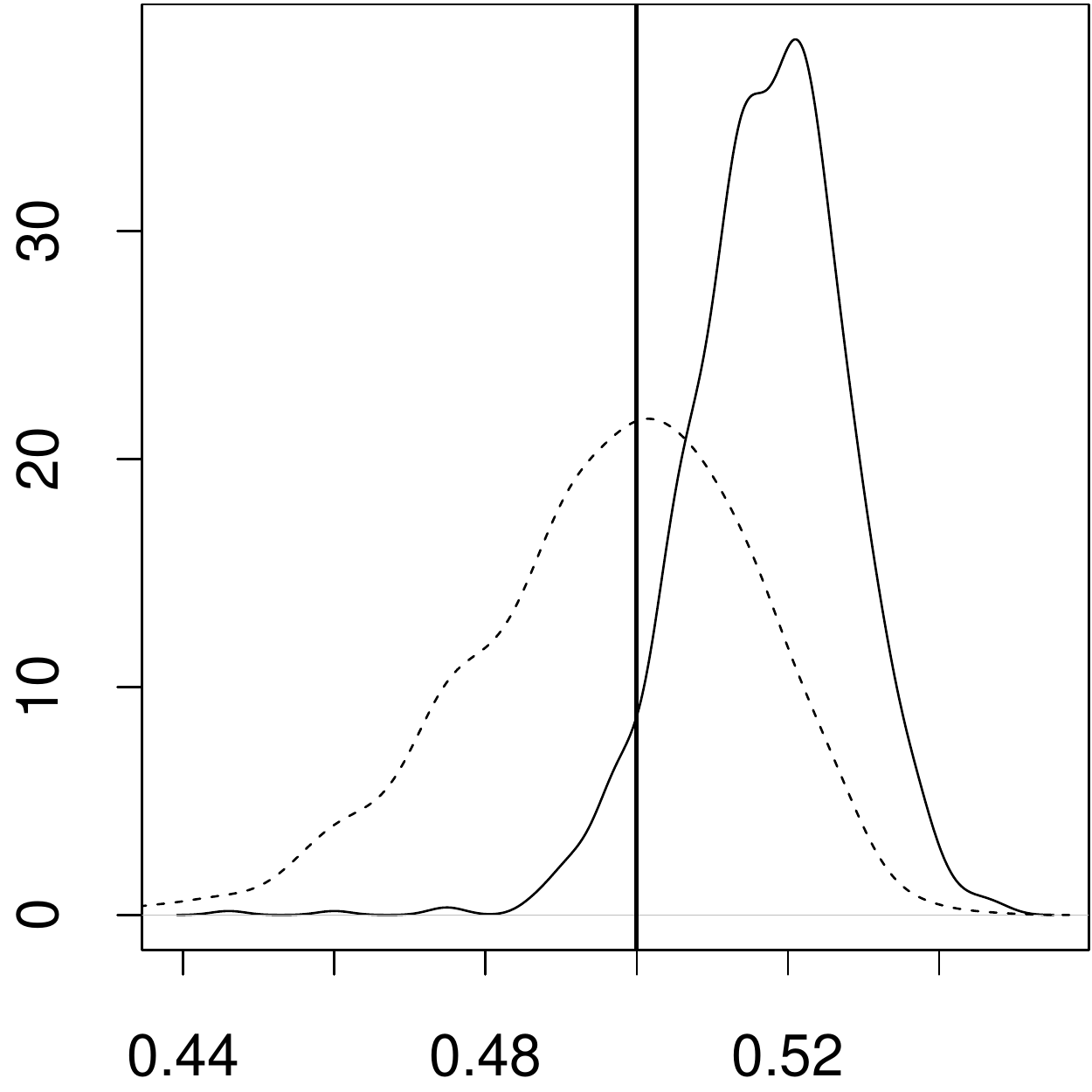}
  \caption{$\alpha_0=0.5$; $\rho = 0$}
\end{subfigure}
\begin{subfigure}{.24\textwidth}
  \centering
  \includegraphics[width=\textwidth]{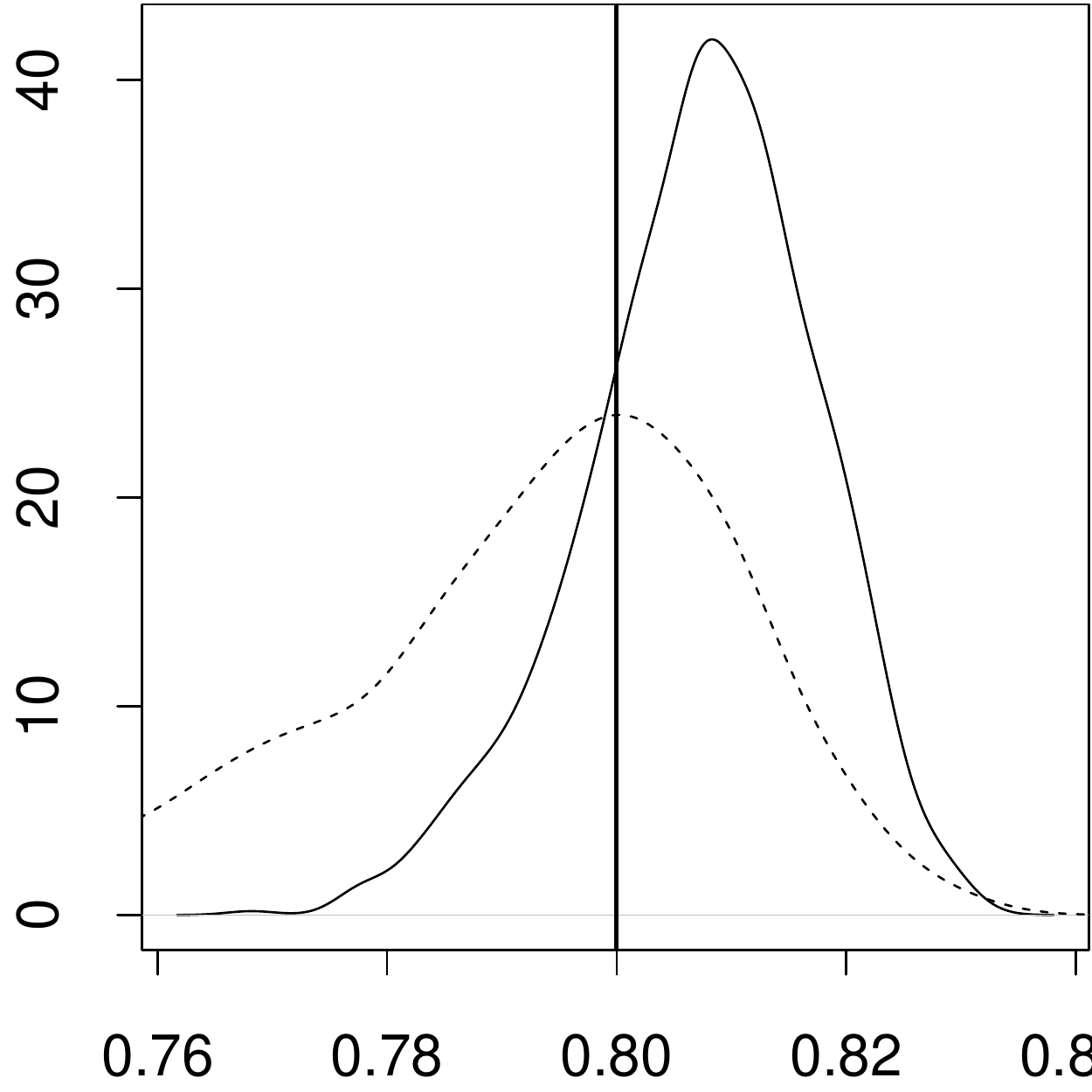}
  \caption{$\alpha_0=0.8$; $\rho = 0$}
\end{subfigure}
\begin{subfigure}{.24\textwidth}
  \centering
  \includegraphics[width=\textwidth]{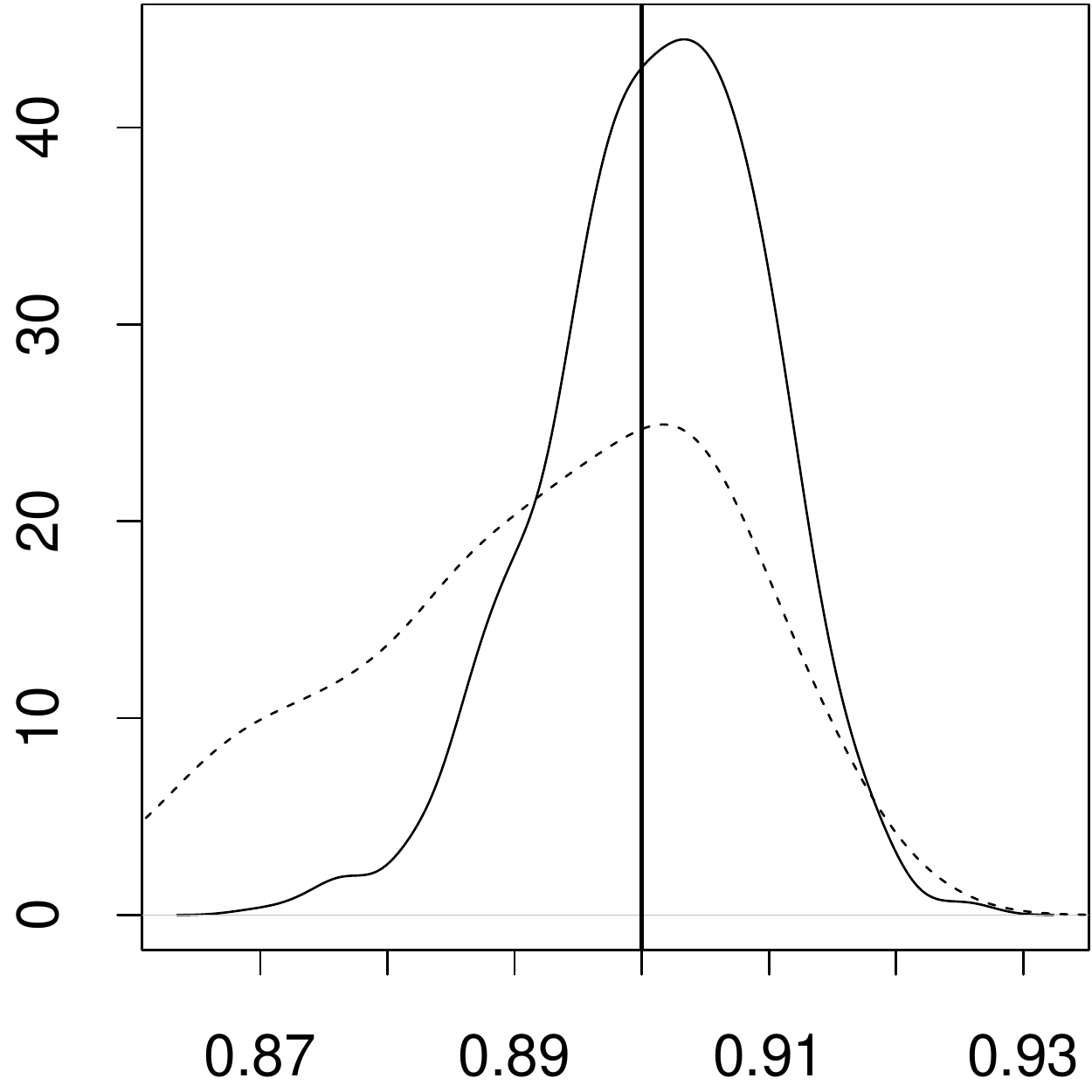}
  \caption{$\alpha_0=0.9$; $\rho = 0$}
\end{subfigure}
\begin{subfigure}{.24\textwidth}
  \centering
  \includegraphics[width=\textwidth]{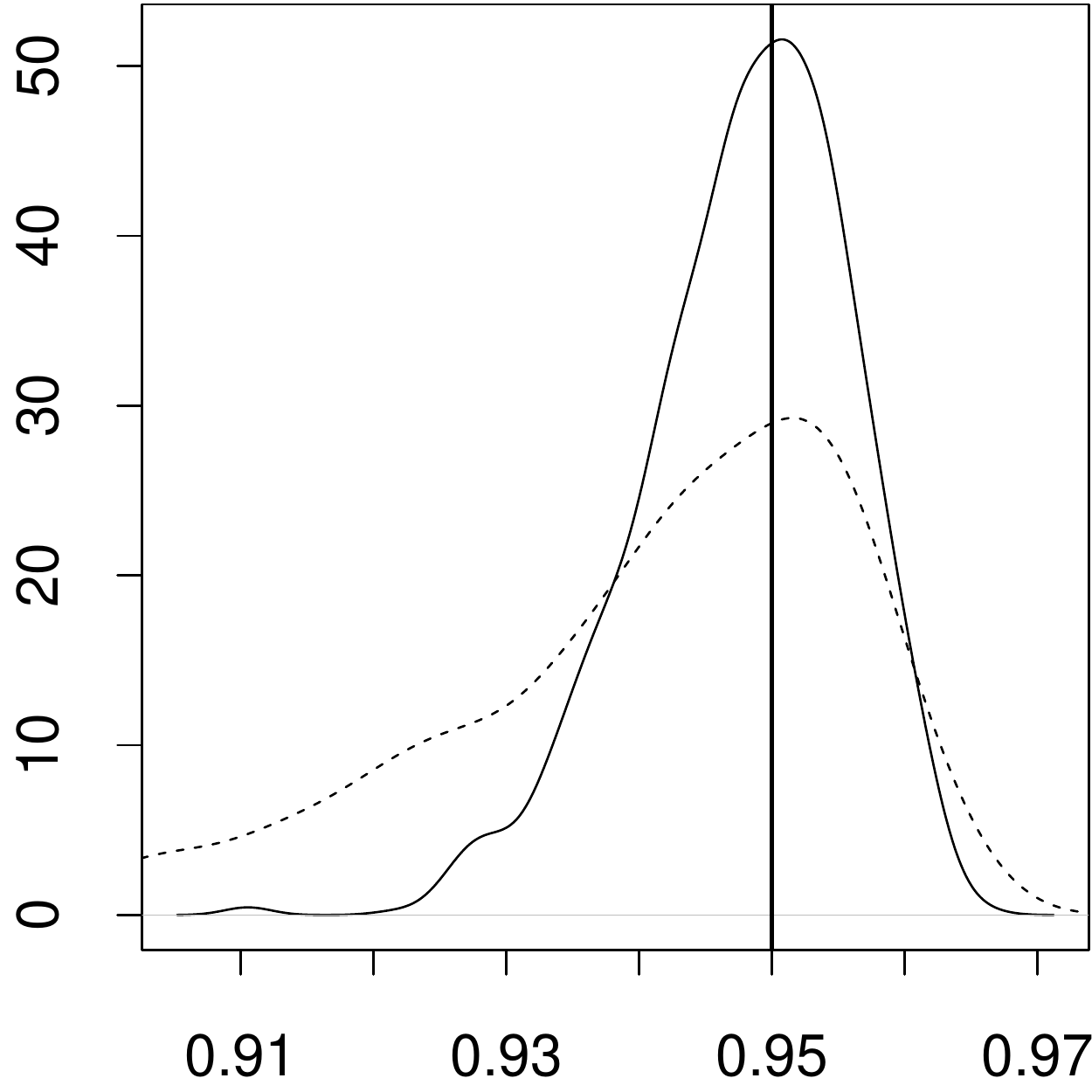}
  \caption{$\alpha_0=0.95$; $\rho = 0$}
\end{subfigure}

\begin{subfigure}{.24\textwidth}
  \centering
  \includegraphics[width=\textwidth]{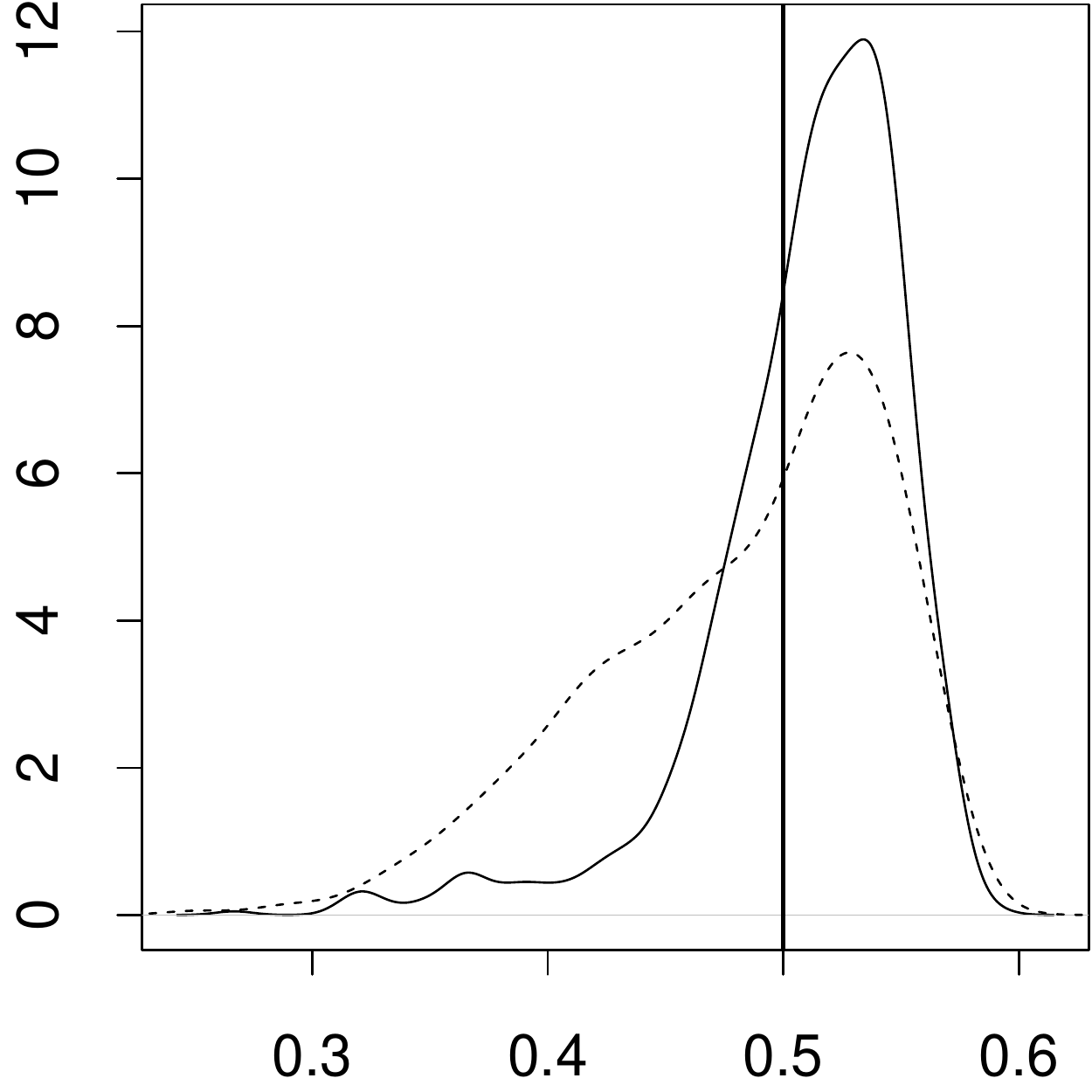}
  \caption{$\alpha_0=0.5$; $\rho = 0.25$}
\end{subfigure}
\begin{subfigure}{.24\textwidth}
  \centering
  \includegraphics[width=\textwidth]{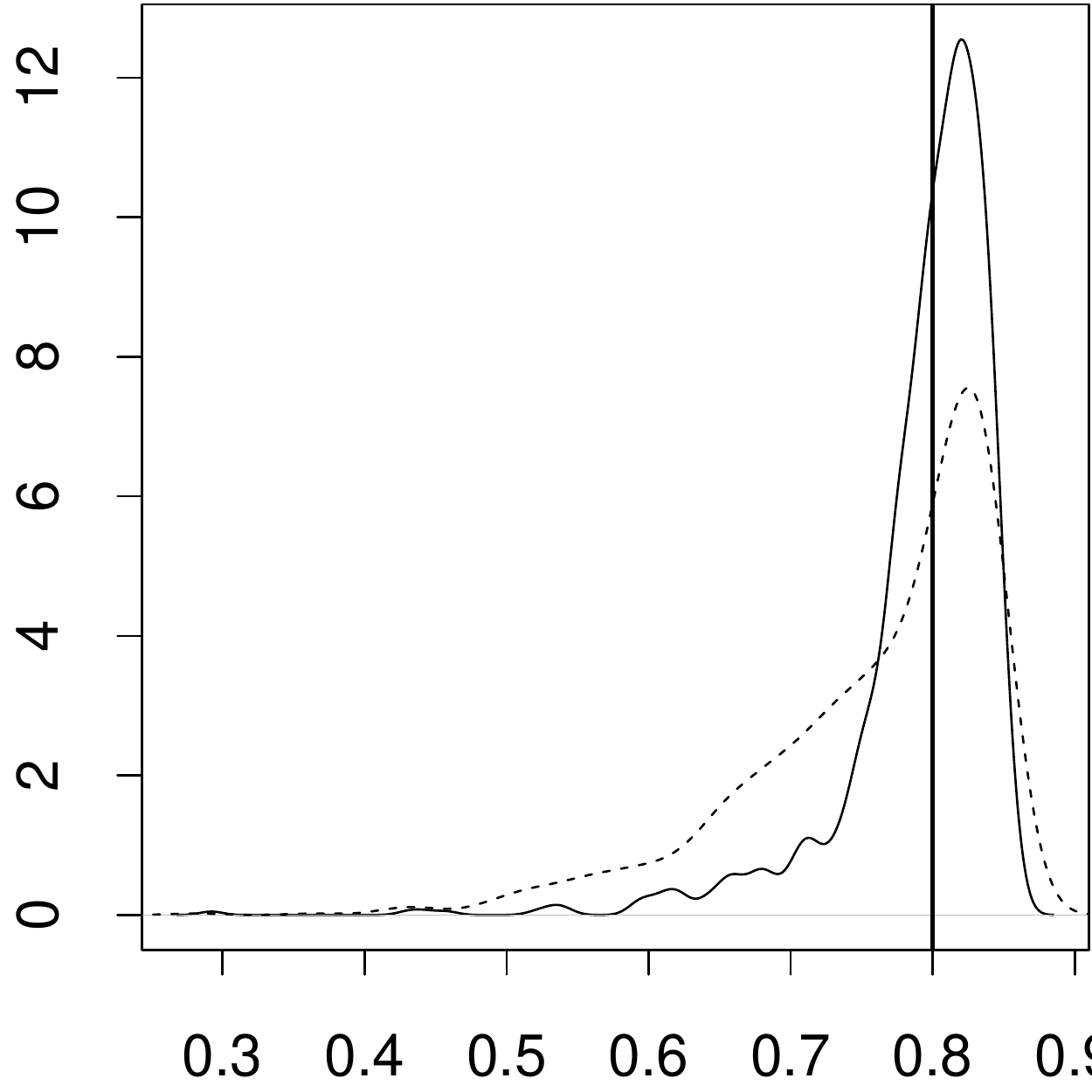}
  \caption{$\alpha_0=0.8$; $\rho = 0.25$}
\end{subfigure}
\begin{subfigure}{.24\textwidth}
  \centering
  \includegraphics[width=\textwidth]{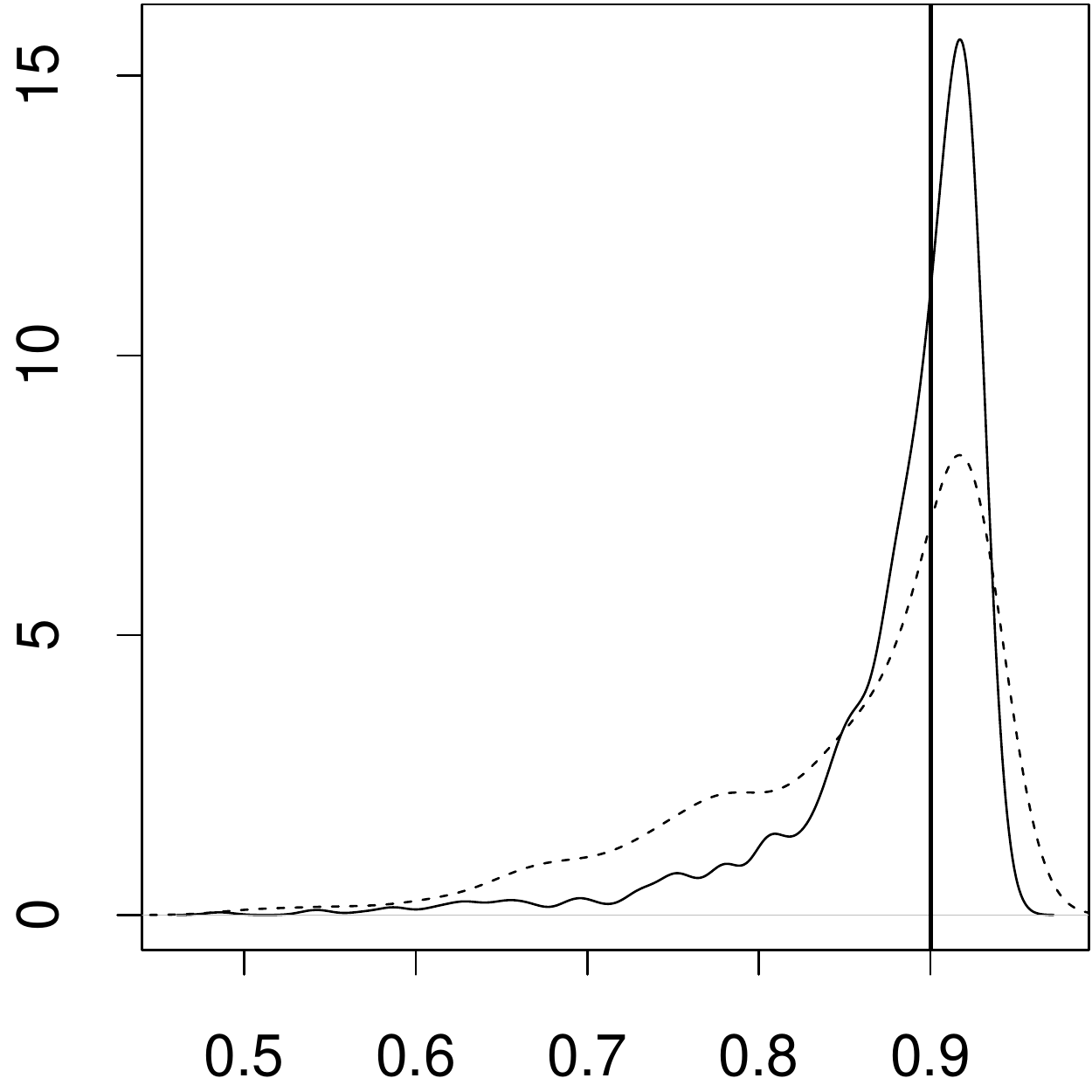}
  \caption{$\alpha_0=0.9$; $\rho = 0.25$}
\end{subfigure}
\begin{subfigure}{.24\textwidth}
  \centering
  \includegraphics[width=\textwidth]{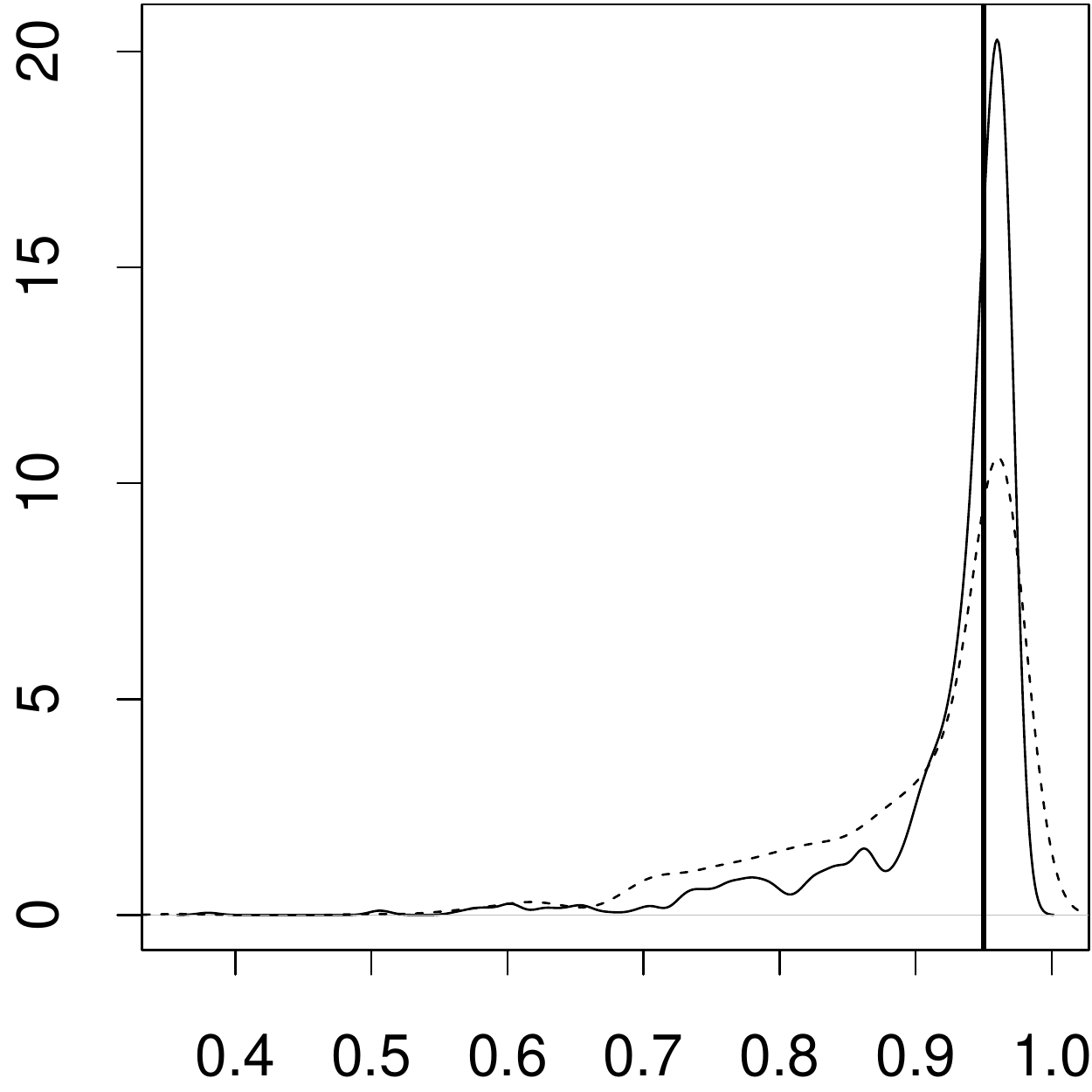}
  \caption{$\alpha_0=0.95$; $\rho = 0.25$}
\end{subfigure}

\begin{subfigure}{.24\textwidth}
  \centering
  \includegraphics[width=\textwidth]{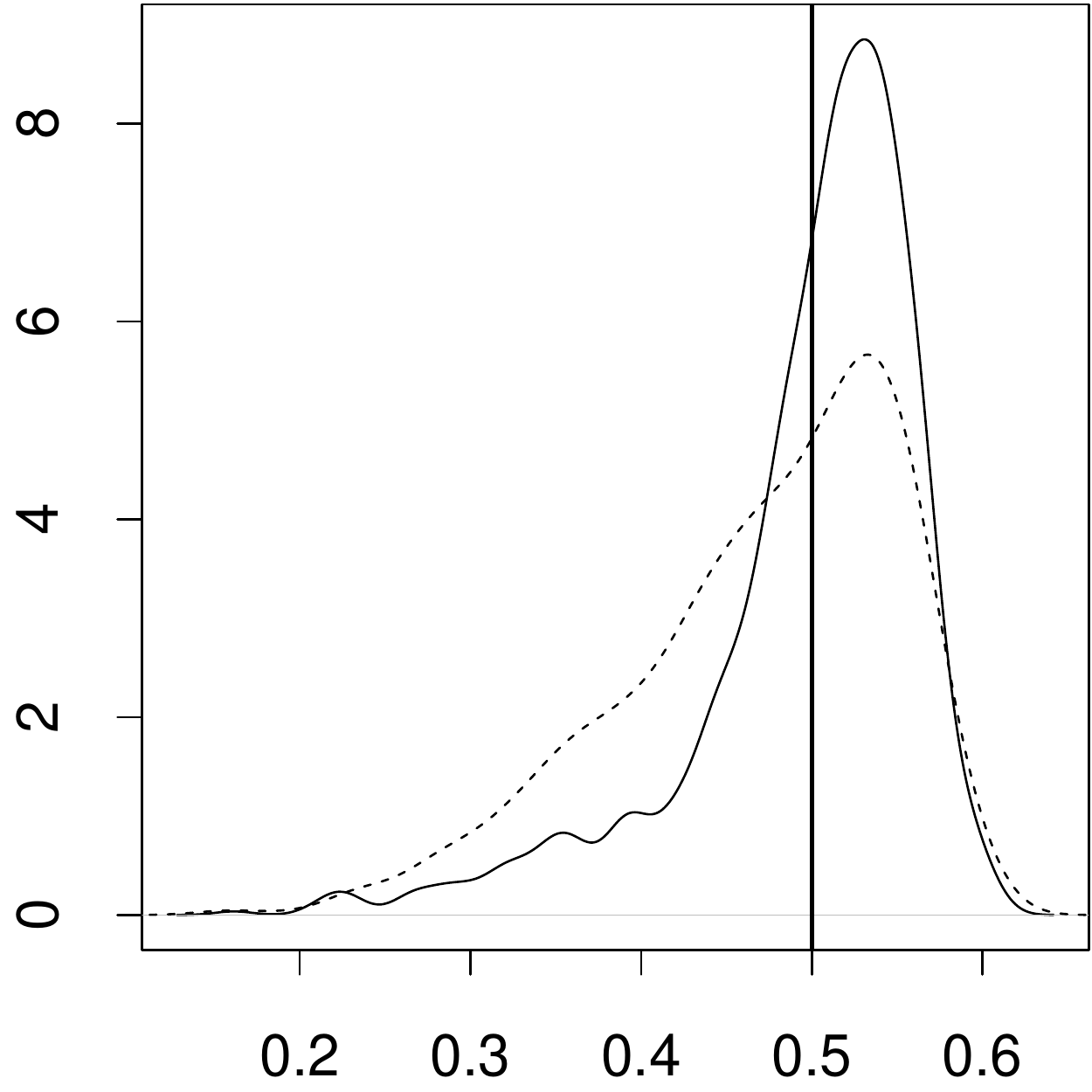}
  \caption{$\alpha_0=0.5$; $\rho = 0.5$}
\end{subfigure}
\begin{subfigure}{.24\textwidth}
  \centering
  \includegraphics[width=\textwidth]{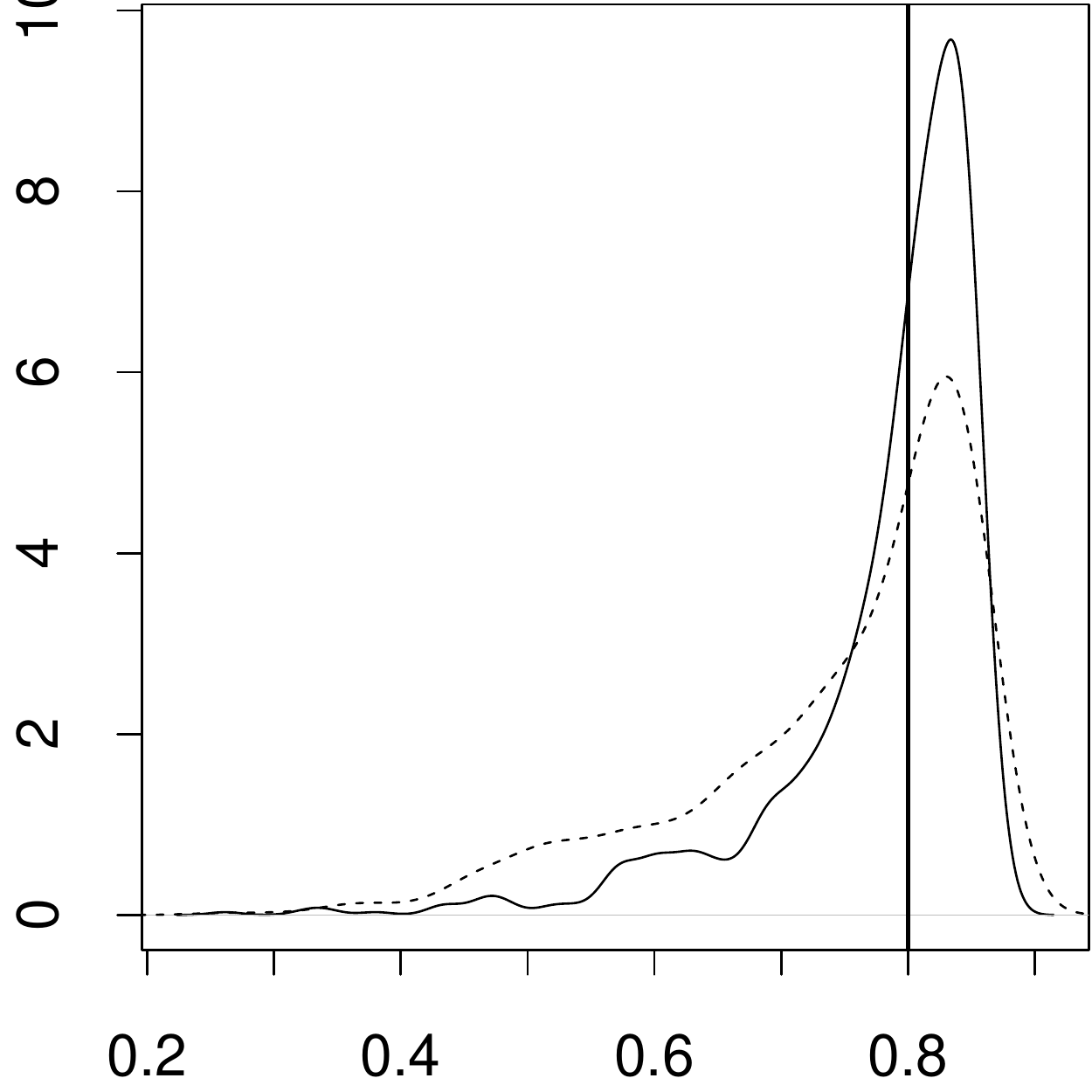}
  \caption{$\alpha_0=0.8$; $\rho = 0.5$}
\end{subfigure}
\begin{subfigure}{.24\textwidth}
  \centering
  \includegraphics[width=\textwidth]{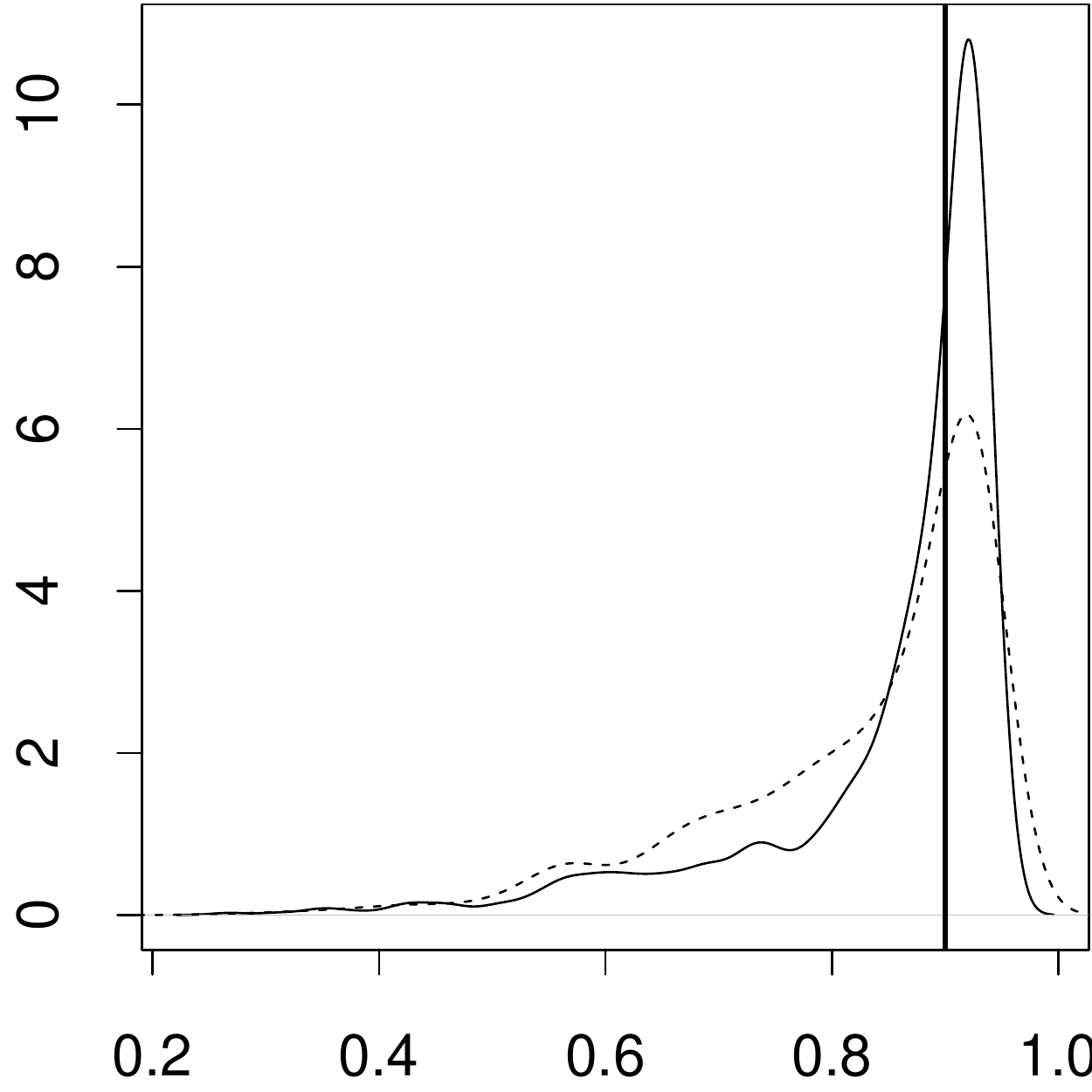}
  \caption{$\alpha_0=0.9$; $\rho = 0.5$}
\end{subfigure}
\begin{subfigure}{.24\textwidth}
  \centering
  \includegraphics[width=\textwidth]{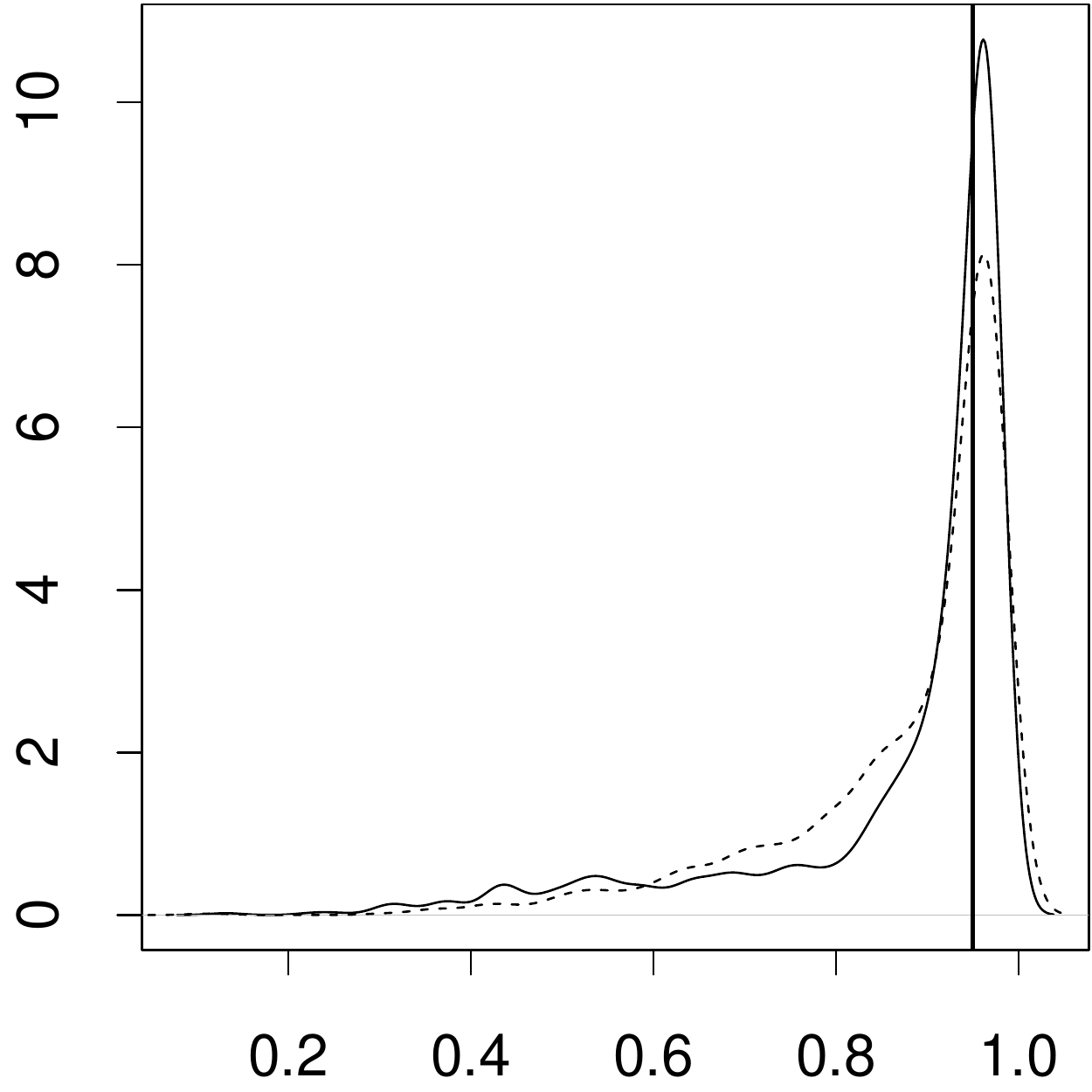}
  \caption{$\alpha_0=0.95$; $\rho = 0.5$}
\end{subfigure}

\begin{subfigure}{.24\textwidth}
  \centering
  \includegraphics[width=\textwidth]{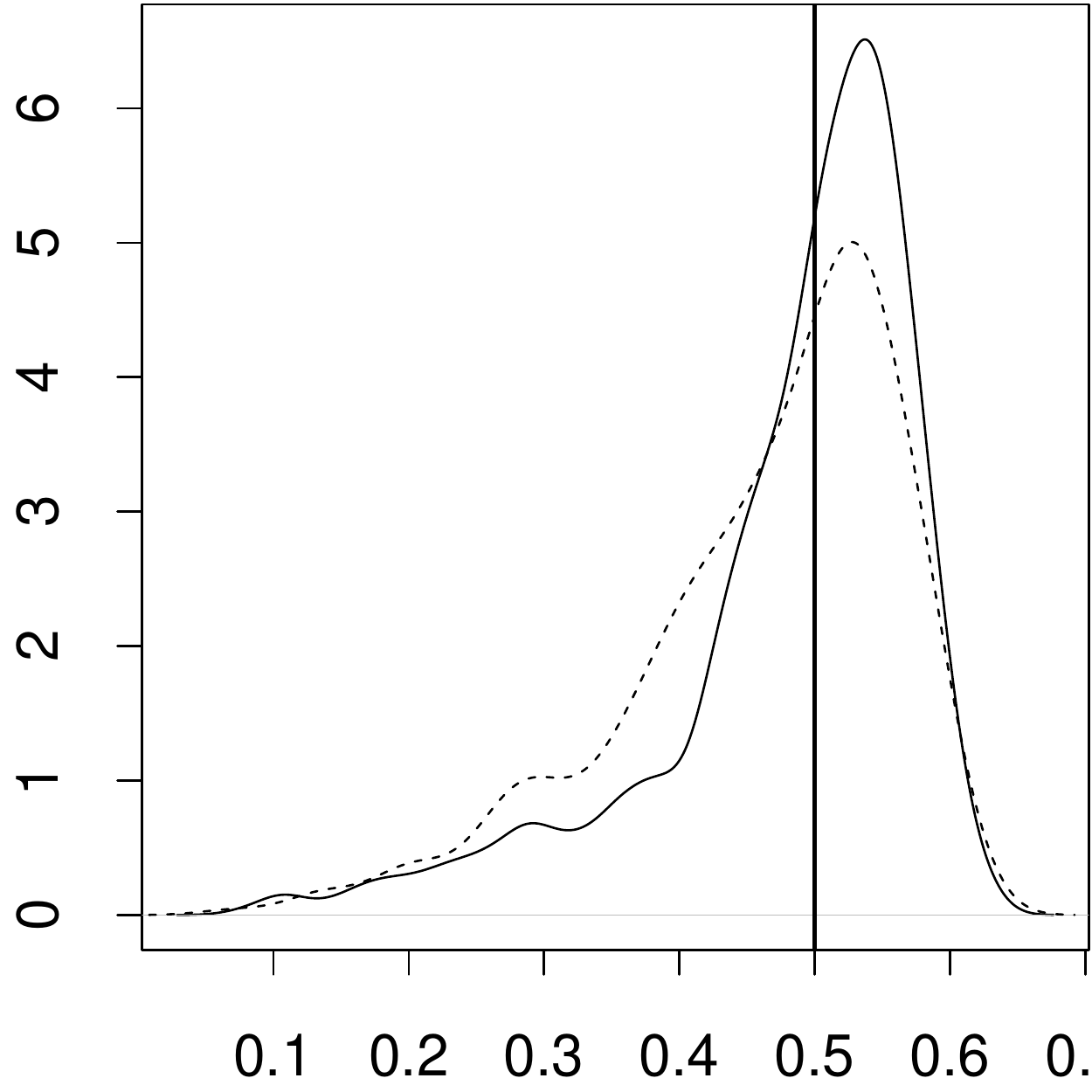}
  \caption{$\alpha_0=0.5$; $\rho = 0.75$}
\end{subfigure}
\begin{subfigure}{.24\textwidth}
  \centering
  \includegraphics[width=\textwidth]{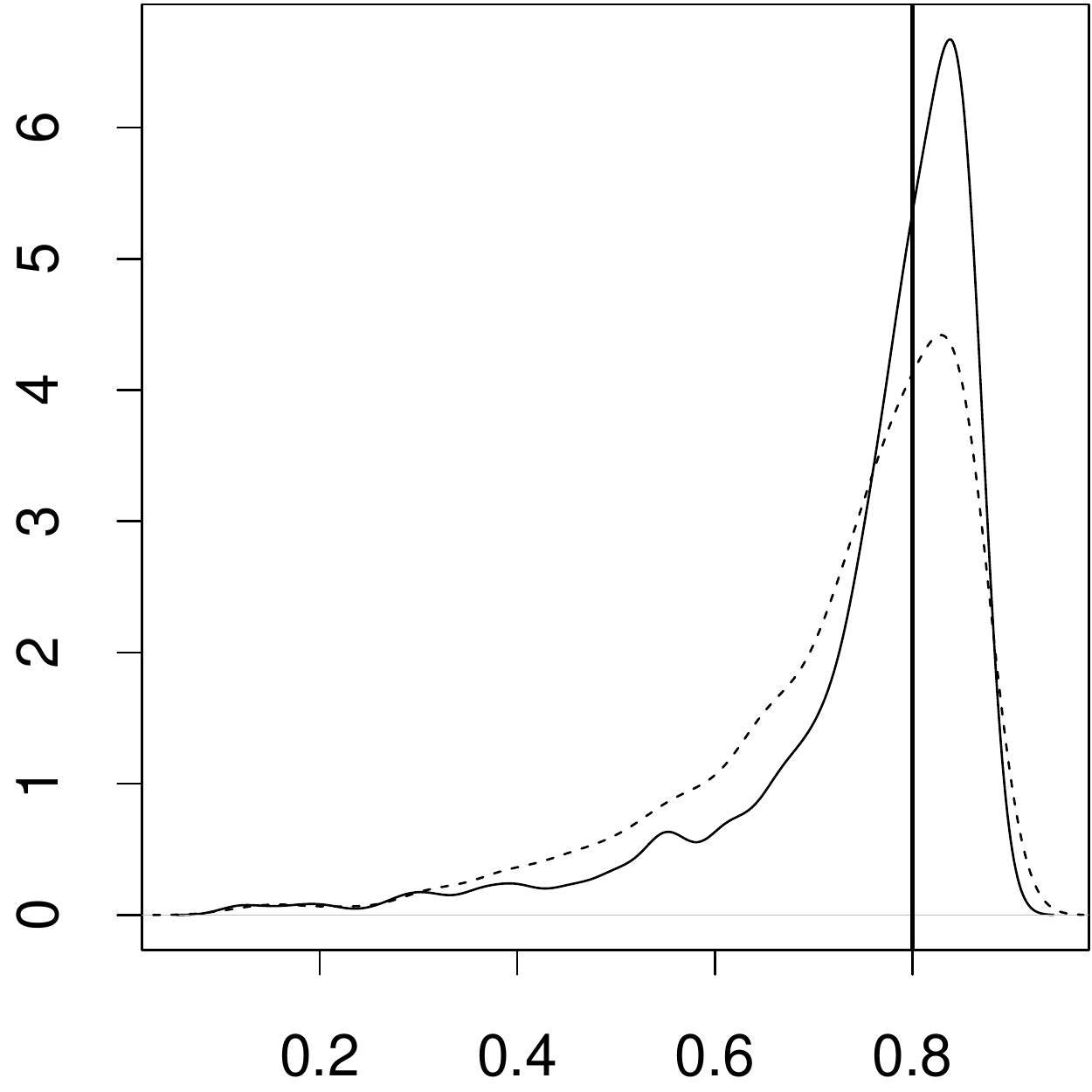}
  \caption{$\alpha_0=0.8$; $\rho = 0.75$}
\end{subfigure}
\begin{subfigure}{.24\textwidth}
  \centering
  \includegraphics[width=\textwidth]{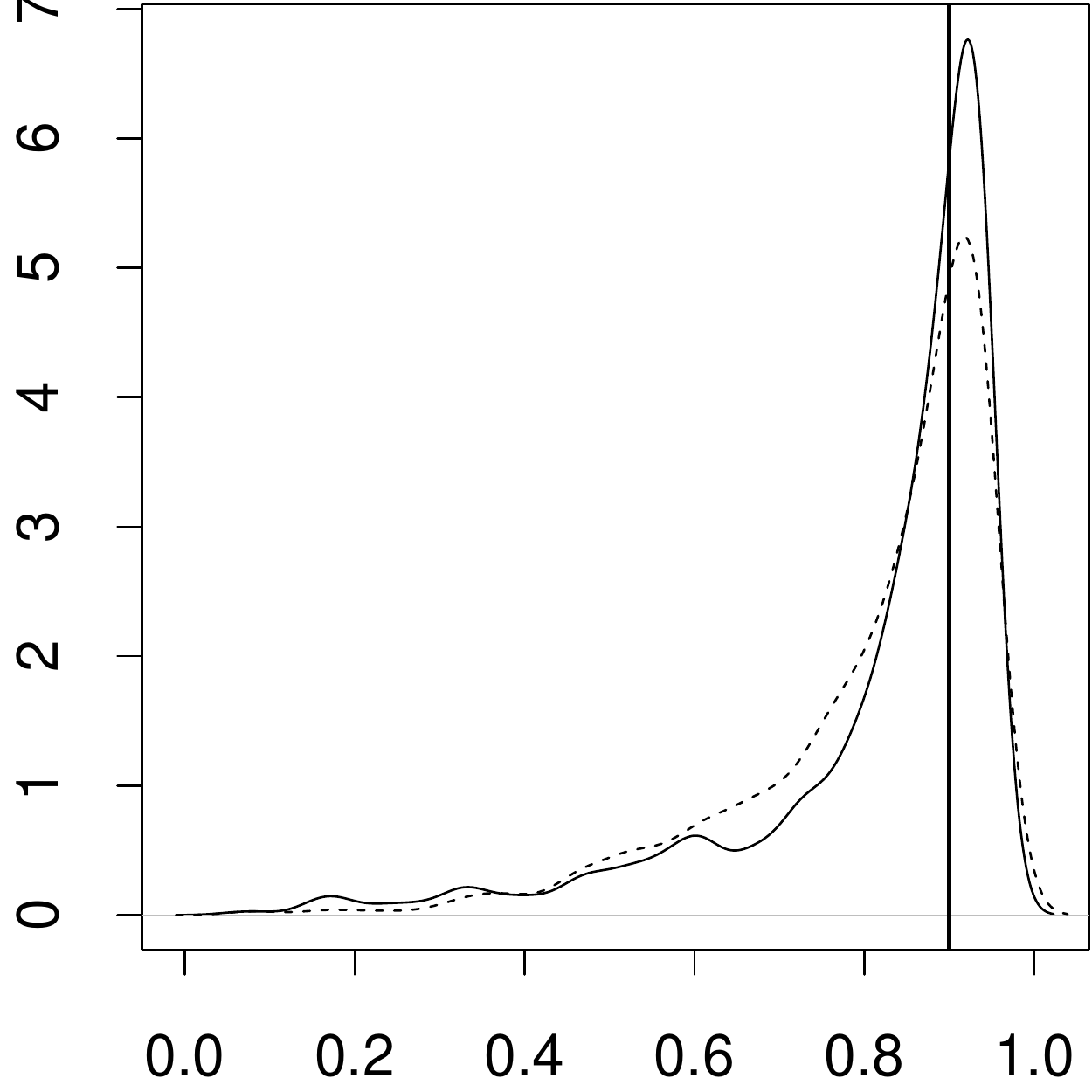}
  \caption{$\alpha_0=0.9$; $\rho = 0.75$}
\end{subfigure}
\begin{subfigure}{.24\textwidth}
  \centering
  \includegraphics[width=\textwidth]{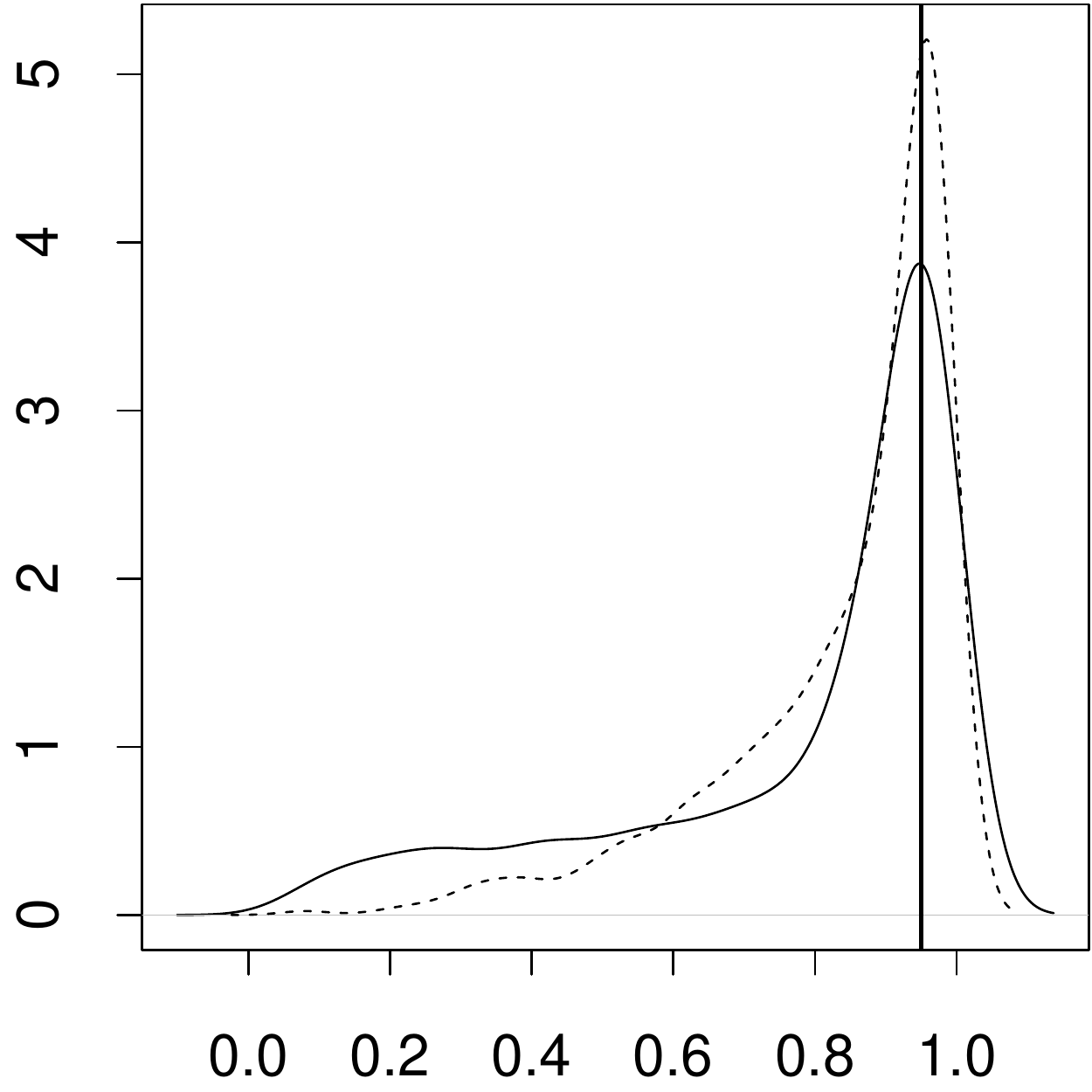}
  \caption{$\alpha_0=0.95$; $\rho = 0.75$}
\end{subfigure}
\caption{Density plots of the estimated $\alpha_0$ for $p$-values from one-sided t-tests ($G = 100$). The vertical line indicates the true $\alpha_0$, which is also marked below each figure, along with the within-group correlation coefficient. The solid lines are the densities of the posterior mean of $\alpha_0$, and the dashed lines are the densities of estimated $\alpha_0$ by fitting a convex decreasing density.}
\label{fig:osg100}
\end{figure}

\section{Summary, conclusions and further directions}
\label{sec:Disc}

We observed that a $k$-monotone density on the unit interval, like a $k$-monotone density on the positive half-line, also admits a mixture representation in terms of scaled beta densities. Such a density can be uniformly approximated by a finite mixture of the same kernels. We considered Bayesian procedures for making an inference on a $k$-monotone density by putting either a Dirichlet process prior or a finite mixture prior on the mixing distribution. We showed that under mild conditions on the true density and the prior distribution, the posterior contracts at the rate $(n/\log n)^{-k/(2k+1)}$, which is the optimal rate up to a polylogarithmic factor. Then we showed that even when $k$ is not known, simply by putting a prior on $k$, the corresponding mixture prior attains the same rate as if $k$ were known. We described an application for estimating the $p$-value distribution in a multiple-testing problem. We argued that it is very appealing to model the $p$-value density under the alternative as a $k$-monotone density, especially if $k$ is unspecified. The posterior contraction results ensure that the posterior distribution for the proportion of null hypotheses is consistent. We conducted a comprehensive simulation to check the comparative performance of the Bayesian method against those of the Grenander estimator and the maximum likelihood estimator for decreasing convex densities in finite samples. We found that the Bayesian procedure is overall the better performer. Further, the performance of the Bayesian procedure with unspecified $k$ is almost as good as the same with the correctly specified $k$, implying that the proposed adaptation scheme works for finite sample sizes. 

Observe that the kernel $\psi_k$ appearing in the characterization of $k$-monotone densities, as given in \eqref{repre_kmofunc},  makes sense for any positive $k$, even when $k$ is not a positive integer. This can be used to define a $k$-monotone density for a fractional $k >0$ through the representations \eqref{repre_kmofunc} and \eqref{kmonodens}, with $k$ replaced by its integer part in the first term of \eqref{repre_kmofunc} and \eqref{kmonodens}.  If Lemma~\ref{lemma:approx} can be generalized for a non-integer $k$, then the posterior contraction rate $(n/\log n)^{-k/(2k+1)}$ can be generalized for a general $k$ as well. Then the contraction rates $(n/\log n)^{-k/(2k+1)}$ prevail for all values of $k$ in the continuum, giving a continuous spectrum of rates similar to the smoothness regime. 

\section{Proofs}
\label{sec:proof}

\subsection{Proof of main theorems}
\begin{proof}[Proof of Theorem \ref{contraction}]
We apply the general theory of posterior contraction for i.i.d. observations as in Section~8.2 of \cite{ghosal2017fundamentals}. We need to obtain a lower bound for prior concentration in a Kullback-Leibler neighborhood of the true density and bound the size of a sieve in terms of the metric entropy so that the remaining part of the parameter space has an exponentially small prior probability. 

We verify the first condition given by (8.4) of \cite{ghosal2017fundamentals} at any $g_0$ with $\epsilon_n$ a constant multiple of $(n/\log n)^{-k/(2k+1)}$: 
\begin{align}
  -\log \Pi ( K(g_0,g)\le \epsilon_n^2,  V(g_0,g)\le \epsilon_n^2)\lesssim n \epsilon_n^2. 
  \label{KLprior}
\end{align}
By Lemma \ref{lemma:approx}, there exists
$g^{\ast}(x) = \sum_{j=0}^{k-1} \beta^{\ast}_j \psi_{j+1}(x,1) + \beta^{\ast}_k\sum_{l = 1}^{J^{\ast}} w_l^{\ast} \psi_k(x, \theta_l^{\ast}) \in \cD^k$ with $(w_l^{\ast}:l=1,\ldots,J^{\ast})\in\Delta_{J^{\ast}}$ and $(\theta_l^{\ast}: l=1,\ldots,J^{\ast})\in (0,1)^{J^{\ast}}$, such that 
$J^{\ast} \lesssim \epsilon_n^{-1/k}$ and
$\norm{g_0 - g^{\ast}}_{\infty} \lesssim \epsilon_n$.

First, we show that we can maintain the same approximation rate by restricting the choice to  $\theta_j^{\ast}\not\in (0,\epsilon^2_n)$ to ensure that $\norm{g_0 - g^{\ast}}_2\lesssim \epsilon_n$. 
Indeed, if there are $\theta_{l}^{\ast} < \epsilon_n^2$, we write $\bar{w} = \sum_{l:\theta_l^{\ast} < \epsilon_n^2} w_l^{\ast}$ and define
\begin{align*}
    g^{\dagger}(x) = \sum_{j=0}^{k-1} \beta^{\ast}_j \psi_{j+1}(x,1) + \beta^{\ast}_k\sum_{l:\theta^{\ast}_l \ge \epsilon_n^2} w_l^{\ast} \psi_k(x, \theta_l^{\ast}) + \beta_{k}^{\ast} \bar{w}\psi_k(x, \epsilon_n^2).
\end{align*}
It follows that $g^{\dagger}\in \cD^k$ and $g^{\dagger}(x) = g^{\ast}(x)$ for all $\epsilon_n^2\le x < 1$. Since, $g_0$ is bounded and $\norm{g^{\ast}-g_0}_{\infty} \lesssim \epsilon_n$, clearly $g^{\ast}$ is bounded. As 
$$g^{\ast}(0+) - g^{\dagger}(0+) = \beta^{\ast}_k \sum_{l:\theta_l < \epsilon_n^2} w^{\ast}_l(\psi_k(0,\theta^{\ast}_l) - \psi_k(0, \epsilon_n^2)) = \beta^{\ast}_k \sum_{l:\theta_l < \epsilon_n^2}w^{\ast}_l (\frac{k}{\theta_l} - \frac{k}{\epsilon_n^2})$$
is nonenagtive, $g^{\star}$ is bounded as well. Now $\norm{g^{\dagger} - g^{\ast}}_2 \lesssim \norm{\Ind_{(0,\epsilon_n^2)}}_2 = \epsilon_n$, and hence $\|g^{\dagger} - g_0\|_2 \lesssim \epsilon_n$. This assures that we can assume without loss of generality that $\theta_l^{\ast} \ge \epsilon_n^2$ for an $\LL_2$-approximation of $g_0$ within the order of $\epsilon_n$ using $J^{\ast} \lesssim \epsilon_n^{-1/k}$ mixture components.

To bound $K(g_0,g)$ and $V(g_0,g)$, we first bound the Hellinger distance. As $d_H(g_0,g) \le d_H(g_0,g^{\ast}) + d_H(g, g^{\ast})$, and $d_H(g_0,g^{\ast})\le \|1/g_0\|_\infty^{1/2} \|g-g_0\|_2 \lesssim \epsilon_n$, it suffices to bound $d_H(g, g^{\ast})$. 
Let $I_j=(\theta_l^{\ast}- \epsilon_n^4/2, \theta_l^{\ast} + \epsilon_n^4/2)$, $l=1,\ldots,J^{\ast}$.
We can assume, without loss of generality, that all spacings between $\theta_1^*,\ldots,\theta_{J^{\ast}}^*$ are bigger than $\epsilon_n^4$. 
Indeed, as $\min \theta_l^{\ast} \ge \epsilon_n^2$,
Lemma~\ref{lem:Psi} eliminates the need for placing multiple support points within $\epsilon_n^4$-neighborhood to control the $\LL_1$-distance within a constant multiple of $\epsilon_n^2$. This implies that $I_l$, $l=1,\ldots,J^{\ast}$, can be assumed to be pairwise disjoint.
Let $I_0=(0,1) \setminus (\cup_{l=1}^{J^{\ast}} I_l) $. 
Then 
\begin{align}
    \|g-g^{\ast}\|_1 &\le \sum_{j=0}^{k-1} |\beta_j - \beta_{j}^{\ast}|
    + \sum_{l=1}^{J^{\ast}} \int_{I_l} \|\psi_k(\cdot, \theta)-\psi_k(\cdot,\theta_l^*)\|_1 d Q(\theta) 
    \nonumber\\
    & \quad + \sum_{l=1}^{J^{\ast}} |Q(I_l)-w_l^*|+ Q(I_0).
     \label{bound3}
\end{align}
The fourth term in \eqref{bound3} is bounded by the third term because $Q(I_0)=1-\sum_{l=1}^{J^{\ast}} Q(I_l)$ and $\sum_{l=1}^{J^{\ast}} w_l^*=1$. 
Since $\|\psi_k(\cdot,\theta)-\psi_k(\cdot,\theta_j^*)\|_1 \le 2 |\theta-\theta_l^*|/\min \theta_l^{\ast} \le 2 \epsilon_n^2$ for any $\theta\in I_l$, and  $\sum_{l=1}^{J^{\ast}} Q(I_l)\le 1$, the second term is bounded by $\sum_{l=1}^{J^{\ast}} 2\epsilon_n^2 Q(I_l)\le 2\epsilon_n^2$. 
Therefore 
$\sum_{j=0}^{k-1} |\beta_j^{\ast} - \beta_j| \le \epsilon_n^2$ and $\sum_{l=1}^{J^{\ast}} |Q(I_l)-w_l^*|\le \epsilon_n^2$ together ensure that $\|g-g^{\ast}\|_1\lesssim \epsilon_n^2$, and hence $d_H(g,g^{\ast})\lesssim \epsilon_n$. 
Therefore, by the last part of Lemma~B.2 of \cite{ghosal2017fundamentals}, it follows that $\max (K(g_0,g),  V(g_0,g))\lesssim \epsilon_n^2$, so it suffices to lower-bound $\Pi\{ \sum_{j=0}^{k-1} |\beta_j - \beta_{0,j}| \le \epsilon_n^2, \sum_{l=1}^{J^{\ast}} |Q(I_l)-w_l^*|\le \epsilon_n^2\}$.
By Lemma~G.13 of \cite{ghosal2017fundamentals}, 
under the assumed conditions on the center measure $H$, for some constant $C, C'>0$, we have
$ \Pi(\sum_{j=1}^{k-1} |\beta_j-\beta_{0,j}|\le \epsilon_n^2) \gtrsim e^{-C k \log(1/\epsilon_n)}$ and  
$\Pi(\sum_{l=1}^{J^{\ast}} |Q(I_l)-w_l^*|\le \epsilon_n^2) \gtrsim e^{-C' J^{\ast} \log(1/\epsilon_n)}$.
As $\bm$ and $Q$ are independent, it follows that
\begin{align}
   -\log \Pi\big(\sum_{j=0}^{k-1} |\beta_j - \beta_{0,j}| \le \epsilon_n^2, \sum_{l=1}^{J^{\ast}} |Q(I_l)-w_l^*|\le \epsilon_n^2\big) \lesssim J^{\ast}\log(1/\epsilon_n).
   \label{prior estimate}
\end{align}
Equating $J^{\ast}\log (1/\epsilon_n) $ with $n\epsilon_n^2$, it is now immediate that \eqref{KLprior} holds for $\epsilon_n$ a constant multiple of $(n/\log n)^{-k/(2k+1)}$.  

Define a sieve $\cD^k_n = \{g\in\cD^k: g(0+) \le M_n\}$, where $M_n = \exp\{C n^{1/(2k+1)}(\log n)^{2k/(2k+1)}\}$, for a large positive constant $C>0$ to be determined later, denotes the upper cut-off. It suffices to verify the local entropy condition given by (8.10) of \cite{ghosal2017fundamentals}: For all $\epsilon\ge \epsilon_n$, 
\begin{align}
    \log \cN(\epsilon/2, \{g\in \cD^k_{n}: d_H (g,g_0)\le 2\epsilon\}, d_H)\lesssim n\epsilon_n^2 .
    \label{entropycond}
\end{align}
If $g\in \cD^k_{n}$, then $\norm{g-g_0}_1 \le 2d_H(g_0, g) \le 4\epsilon_n$. This implies that 
$$
\epsilon_n g(\epsilon_n)\le \int_0^{\epsilon_n} g(x)dx \le \int_0^{\epsilon_n} g_0(x)dx + 4\epsilon_n\le (g_0(0+)+4)\epsilon_n,
$$
giving $g(\epsilon_n) \le g_0(0+) + 4$.

Define $\cD^k_{n,1} = \{g\Ind_{(0,\epsilon_n)}:g\in\cD_{n}^k\}$
and $\cD^k_{n,2} = \{g\Ind_{[\epsilon_n,1]}:g\in\cD_{n,j}^k\}$.
By Lemma \ref{entropy}, we have
\begin{align*}
    &\log \cN(\epsilon_n/4, \cD^k_{n,1}, d_H)\lesssim \log(\epsilon_n M_n)^{{1}/({2k})}[(4+g_0(0+))\epsilon_n]^{{1}/{k}}(\epsilon_n/4)^{-{1}/{k}} 
\end{align*}
which can be bounded by a constant multiple of $n^{1/(2k+1)} (\log n)^{2k/(2k+1)} = n\epsilon_n^2$. 
By Lemma \ref{entropy} again, we have
\begin{align*}
\log \cN(\epsilon_n/4, \cD^k_{n,2}, d_H)
    \lesssim  \log g_0(0+) + 8^{{1}/({2k})}(\epsilon_n/4)^{-{1}/{k}},
\end{align*}    
which can be bounded by a constant multiple of  $\epsilon_n^{-{1}/{k}} \lesssim n\epsilon_n^2$.

Since $\cD^k_{n} \subset \cD^k_{n,1} + \cD^k_{n,2}$,  
\begin{align*}
    \log \cN(\epsilon_n/2, \cD^k_{n}, d_H) 
    \le \log \cN(\epsilon_n/4, \cD^k_{n,1}, d_H) + \log \cN(\epsilon_n/4, \cD^k_{n,2}, d_H) 
\end{align*}
is bounded by a multiple of $n\epsilon^2_n$,
verifying \eqref{entropycond}.

Next, we control the residual prior probability $\Pi(\cM^k\setminus\cM^k_n)$. Using the fact that $\psi_k(x,\theta)\le k/\theta$, we obtain the estimate  
\begin{align*}
 \Pi(g(0+) > M_n) 
& \le \Pi(k\int \theta^{-1} dQ(\theta)  >M_n )\\
& =  \Pi\big(\int_0^{2k/M_n} \theta^{-1} dQ(\theta)  + \int_{2k/M_n}^1 \theta^{-1} dQ(\theta) >M_n/k\big).
\end{align*}
Because always $\int_{2k/M_n}^1 \theta^{-1} d Q (\theta) \le M_n/(2k)$, the residual probability is at most 
\begin{align*}
\Pi\big(\int_0^{2k/M_n} \theta^{-1} d Q (\theta)> \frac{M_n}{2k}\big) & \le \frac{2k}{M_n} \E\int_0^{2k/M_n} \theta^{-1}d Q(\theta) 
 \lesssim \int_0^{2k/M_n} \theta^{-1+t_1}d\theta 
\end{align*}
respectively using Markov's inequality and the assumption on the base measure. As the last expression is bounded by a multiple of $M_n^{-t_1}$, it follows that the residual probability is at most $ \exp\{-C n\epsilon_n^2\}$, where $C>0$ can be chosen as large we please for $\epsilon_n=(n/\log n)^{-k/(2k+1)}$ by our choice of $M_n$.  This verifies all required conditions for the applicability of the general theory of posterior contraction rate with $\epsilon_n=(n/\log n)^{-k/(2k+1)}$. 
\end{proof}

\begin{proof}[Proof of Theorem \ref{thm:fmix}]
The proof is largely similar to that of Theorem \ref{contraction} by verifying the three conditions of Theorem~8.9 of \cite{ghosal2017fundamentals} for $\epsilon_n=(n/\log n)^{-k/(2k+1)}$. We highlight the differences in the following.

To estimate the prior concentration in the Kullback-Leibler neighborhood, we need to condition on the event $\{J = J^{\ast}\}$ in \eqref{KLprior}, where $J^{\ast}$ is such that the uniform approximation using a mixture $\psi_k$ with $J^*$ support points is within $n^{-k/(2k+1)}$. By  Lemma~\ref{lemma:approx}, we can assume that $J^{\ast}\le n^{1/(2k+1)}$. To bound the prior probability of $\{\theta_j\in I_j\}$, given $J = J^{\ast}$, observe that the condition assumed on on $H$, we have that 
$$\Pi(\theta_l\in I_l, 1\le l \le J|J = J^{\ast}) \gtrsim \epsilon_n^{4t_2 J^{\ast}} \ge \exp\{ C_1 J^{\ast} \log n\} \ge \exp\{ -C_2 n\epsilon_n^2\}, $$
where $C_1$ and $C_2$ are two positive constants. Along with the estimate 
$\Pi(J = J^{\ast}) =  \exp\{ - c J^{\ast}\log n + \log(n^c - 1)\} \ge \exp\{ - C_3n\epsilon_n^2\}$ 
 for some $C_3>0$, the required prior concentration rate is verified.

A sieve is chosen to be $\{g$ given by \eqref{kmonodens}: $g(0+)\le M_n\}$, where $M_n = \exp\{C n^{1/(2k+1)}(\log n)^{2k/(2k+1)}\}$ for a large positive constant $C$. 
The residual prior probability of the complement of the sieve is bounded by $\sum_{j=1}^{J_n} \Pi(J=j) \Pi (g(0+)\le M_n|J=j)$. Each term in the sum can be estimated as in the proof of Theorem~\ref{contraction}. It suffices to note that, as in the last theorem, $\E Q=H$ because the support points and their weights are independently distributed, these weights sum to one, and the support points are i.i.d. draws from $H$. 

The metric entropy bound for the sieve is obtained by applying Lemma \ref{entropy}, as before. 
\end{proof}

\begin{proof}[Proof of Theorem \ref{thm:finite}]
The proof is again obtained by applying the general theory on the posterior contraction rate in \cite{ghosal2017fundamentals} and the verification of the prior concentration condition proceeds in the same way as in the proof of Theorem \ref{contraction}. However, to derive the stated nearly parametric rate, the estimates of the metric entropy and the residual prior probability will have to be refined.

Suppose that $Q_0$ is supported on $J_0$ fixed points $\theta^0_1, \cdots, \theta^0_{J_0}$ in $(0,1)$ with weights $w_1^0,\ldots,w_{J_0}^0$. Let $c_0$ be the minimum of $\{\theta^0_l\}$.
Let $I_l = (\theta^0_l - c_0\epsilon_n^2/2, \theta^0_l + c_0\epsilon_n^2/2)$, for $l=1,\ldots,J_0$, and $I_0=(0,1) \setminus (\cup_{l=1}^{J_0} I_l)$. Clearly, 
$I_l$, $1\le l \le J_0$, are pairwise disjoint when $n$ large enough. Then $\|g-g_0\|_1 $ is bounded by the expression on the right side of \eqref{bound3} with $J_0$ replacing $J^{\ast}$. Therefore, following the same chain of arguments, the estimate of the prior probability of the Kullback-Leibler neighborhood reduces to 
$$\Pi \big(\sum_{j=0}^{k-1}|\beta_j - \beta_{0,j}| \le \epsilon_n^2\big) \times \Pi(J=J_0) \times \Pi \big(\sum_{l=1}^{J_0} |Q(I_l)-w_l^0|\le \epsilon_n^2| J = J_0\big).$$ 
As before, $-\log \Pi (\sum_{j=0}^{k-1}|\beta_j - \beta_{0,j}| \le \epsilon_n^2)\lesssim  k \log (1/\epsilon_n)$ and 
$$-\log \Pi\big(\sum_{l=1}^{J_0} |Q(I_l)-w_l^0|\le \epsilon_n^2|J=J_0\big)\lesssim  J_0 \log (1/\epsilon_n)\lesssim J_0\log n,$$ while the prior for $J$ satisfies $-\log \Pi(J=J_0) =cJ_0\log n - \log(n^c - 1)\lesssim J_0 \log n$. Hence the prior concentration condition \eqref{KLprior} holds for $\epsilon_n=\sqrt{(\max(J_0,k) \log n)/n}$.

Take $\bar{J}=L J_0$ for some $L>1$ to be determined later. 
We consider a sieve $\cL_n^k =\{g$ given by \eqref{kmonodens}: $Q =\sum_{l=1}^J w_l \psi_k(\cdot, \theta_l), (w_l: l\le J)\in\Delta_J, (\theta_l)\in (n^{-2},1)^J, l=1,\ldots,J, J\le \bar{J} \}=\cup_{J=1}^{\bar{J}}\cL_{n,\bar{J}}^k$, say. Then the residual prior is bounded by
\begin{align}
    \Pi(J > \bar{J}) + \Pi(\theta_l < n^{-2},1\le l\le J, J \le \bar{J}). \label{resisum}
\end{align}
The first term in the last display is bounded by 
\begin{align*}
\exp \{- cL\max\{k, J_0\} \log n + \log(n^c - 1)\} \le \exp\{ -C \max(k,J_0)\log n\},
\end{align*}
for some $C>0$. 
We also observe that
$$ \Pi(\bigcup_{J \le \bar{J}}\bigcup_{l \le J}\{ \theta_l < n^{-2} \})
     = \sum_{j=1}^{\bar{J}}  \sum_{l=1}^J \Pi(\theta_l < n^{-2}| J=j )
     \le {\bar{J}}^2   H((0, n^{-2})). 
$$
Using the inequality 
\begin{align}
    \int_0^{a} e^{-t/x} dx = \frac{a^2}{t} e^{-t/a} - \int_0^a \frac{2}{t} xe^{-t/x}dx \le \frac{a^2}{t} e^{-t/a}, \quad t>0,
    \label{residualprob}
\end{align}
the estimate above reduced to a constant multiple of $\bar{J}^2 n^{-4} e^{-t_3 n^2}$. Thus the residual prior probability is bounded by $\exp\{ -C \max(k, J_0) \log n\}$, where $C$ can be chosen as large as we please by making $L$ large enough. 

Next, we estimate the metric entropy of the sieve. For any two arbitrary elements $g_1,g_2$ of $\cL_{n,\bar{J}}^k$, 
$g_r(x)=\sum_{j=0}^{k-1} \beta_{r,j}\psi_{j+1}(x, 1) + \beta_{r,k}p_r(x)$, where $p_r(x) = \sum_{l=1}^J w_{r,j} \psi_k(x, \theta_l)$, $r=1,2$, 
observe that 
$\norm{g_1 - g_2}_1 \le \sum_{j=0}^{k-1} |\beta_{1,j} - \beta_{2,j}| + \norm{p_1-p_2}_1$. 
Using the estimate in Lemma~\ref{lem:Psi},  $\norm{p_1-p_2}_1$ can be bounded by  
\begin{eqnarray} 
   \lefteqn{ \sum_{l=1}^{J} w_{1,l} \|\psi_k (\cdot, {\theta_{1,l}})-\psi_k (\cdot, {\theta_{2,l}})\|_1+ \sum_{l=1}^J |w_{1,l}-w_{2,l}|} \nonumber \\ 
   &&\le 2n^2 \sum_{l\le J} |\theta_{1,l}-\theta_{2,l}|+\sum_{l=1}^J |w_{1,l}-w_{2,l}|.
    \label{l1bound2}
\end{eqnarray}
Thus if $\sum_{j=0}^{k-1} |\beta_{1,l}-\beta_{2,l}|\le \epsilon_n^2/2$, $\sum_{l=1}^J |w_{1,l}-w_{2,l}|\le \epsilon_n^2/4$ and $ \max\{|\theta_{1,l}-\theta_{2,l}|: l\le J\} \le \epsilon_n^2/(8n^2J)$, then $d_H(g_1,g_2)\le \|g_1-g_2\|_1^{1/2} \le \epsilon_n$. 
The $\epsilon_n^2/(4n^2J)$ covering number of  $[0,1]$ is bounded by $4n^2J\le 4 n^2 \bar{J}$.  
The $\epsilon_n^2/2$-covering number of $\Delta_{k}$ in the $\ell_1$ metric and the $\epsilon_n^2/4$-covering number of $\Delta_{J}$ in the $\ell_1$ metric are respectively bounded by $(10/\epsilon^2_n)^{k-1}$ and $(20/\epsilon^2_n)^{J-1}\le (20/\epsilon^2_n)^{\bar J}$ by Proposition~C.1 of \cite{ghosal2017fundamentals}. Then the $\epsilon$-Hellinger metric entropy of $\cL_n^k$ is bounded by 
$$\log (\bar{J}\times 4 n^2 \bar{J} \times (10/\epsilon^2_n)^{k-1} \times (20/\epsilon^2_n)^{\bar J}) \lesssim \bar{J}(\log n +\log (1/\epsilon)). $$
Thus for $\epsilon_n=\sqrt{(\max(J_0,k) \log n)/n}$, the entropy condition (8.5) of Theorem 8.9 of \cite{ghosal2017fundamentals} holds.
\end{proof}

\begin{proof}[Proof of Theorem \ref{thm:adp}]
    The first condition follows from the proof of Theorems \ref{contraction} and \ref{thm:fmix} upon conditioning on ${k=k_0}$ and using the fact that 
     $-\log \Pi(k=k_0) \lesssim k_0\log k_0 \le n\epsilon_n^2$ under (K1)  
    and $-\log \Pi(k=k_0) \lesssim k_0\log n \lesssim n\epsilon_n^2$ under (K2), where $\epsilon_n=(n/\log n)^{-k_0/(2k_0+1)}$.
    Then the first condition holds for both the Dirichlet process mixture prior (see the proof of Theorem \ref{contraction}) and the finite mixture prior (see the proof of Theorem \ref{thm:fmix}).

To verify the remaining two conditions for posterior contraction rate, we first address the finite mixture prior. For the metric entropy condition, consider the sieve 
        $\cL_n = \cup_{k=1}^{k_n} \cup_{j=1}^{J_n}\cL_{j,k}$,  
    where 
    $\cL_{J,k} = \big\{\sum_{j=0}^{k-1}\beta_j \psi_{j+1}(x,1) + \beta_k\sum_{l=1}^{J} w_l \psi_k(\cdot,\theta_l) \in \cD^k: (w_l: l\le J)\in \Delta_{J}, n^{-2} \le \theta_l\le 1, l\le J\big\}.$ 
    Following the corresponding part in the proof of Theorem \ref{thm:finite}, we know that the Hellinger metric entropy of $\cL_{J_n, k}$ is bounded by up to a constant multiple of $\max(k, J_n) \log n$. 
    Hence, the Hellinger entropy of $\cL_n$ can be bounded as follows,
    \begin{align*}
        \log \sum_{k=1}^{k_n}\cN(\epsilon_n, \cL_{J_n, k}, d_H)  \le \log k_n + \log \cN(\epsilon_n, \cL_{J_n, k_n}, d_H) \lesssim \max(k_n,J_n)\log{n}.
    \end{align*}
By choosing $J_n$ the integer part of $ L_1 (n/\log n)^{1/(2k_0 + 1)}$ for some $L_1>0$, we get $J_n\log n \asymp n\epsilon_n^2$ while maintaining $J_n > J^{\ast} \asymp \epsilon_n^{-k_0}$, where $J^{\ast}$ is the number of terms used to approximate to derive the estimate in the prior concentration condition. We choose $k_n$ the integer part of $L_2 (n/\log n)^{1/(2k_0 + 1)}$ for some $L_2>0$ to fulfil the entropy condition.

Now it remains to bound the residual prior probability of the sieve. Under both (K1) and (K2), the tail estimate of  $\Pi(k>k_n)\le e^{-L n\epsilon_n^2}$ is obtained with $L$ as large as we please by choosing $L_2$ sufficiently large. A similar argument applies for the tail $\Pi(J>J_n)$. Now  
$$\Pi(\theta_l < n^{-2}, 1\le l \le J, J \le J_n)
     \le \sum_{m=1}^{J_n} \sum_{l=1}^J \Pi(\theta_l\in (0, n^{-2})) \le J_n^2 n^{-4} e^{-t_3 n^2},$$
where the last inequality is due to \eqref{residualprob}.
This expression is also bounded by $e^{-L n\epsilon_n^2}$ where we can make $L>0$ as large as we wish. 
the proof for the finite mixture prior case now follows by an application of the general theory of posterior contraction.

For the Dirichlet process mixture prior, the sieve construction and the residual prior bounding need some modifications. We will elaborate on the differences in the following. Consider the sieve, $\cE_n = \cup_{k=1}^{k_n} \cE_{k,n}$ where 
\begin{multline*}
    \cE_{k,n} = \big\{\sum_{j=0}^{k-1} \beta_j \psi_{j+1}(x, 1) + \beta_k\sum_{l=1}^{\infty} w_l\psi_k(x, \theta_l) \in \cD^k: \\
    (w_j: j=1,2,\ldots)\in \Delta_{\infty}, \sum_{j>J_n} w_j < \epsilon_n^2, \theta_1,\ldots, \theta_{J_n} \in (n^{-2},1)\big\}.
\end{multline*}
The residual prior probability is bounded in the following,
\begin{align*}
    \Pi(\cE_n^c)\le \Pi(k>k_n) + \Pi\big(\sum_{j>J_n} w_j \ge \epsilon_n^2\big) + J_n H((0,n^{-2})).
\end{align*}
The first and the third terms can be bounded in a similar way as in the previous part. For the second term, by stick-breaking weight representation, $\sum_{l > J_n} w_l =  \prod_{l=1}^{J_n} (1-V_l)$, where $V_l \overset{i.i.d.}{\sim}\text{Beta}(1,a)$. Since $-\sum_{l=1}^{J_n}\log(1-V_l)$ is Gamma distributed with shape parameter $J_n$ and rate parameter $A$.
Then it follows that $ \Pi(\sum_{l>J_n} w_l \ge \epsilon_n^2) $ is given by 
\begin{align*}
    \P\big(-\sum_{l=1}^{J_n}\log(1-V_l) \le  2\log \epsilon_n^{-1}\big)\le \frac{(2a\log \epsilon_n^{-1})^{J_n}}{(J_n-1)!} \le \sqrt{\frac{J_n}{2\pi}}(2eAJ_n^{-1}\log \epsilon_n^{-1} )^{J_n}
\end{align*}
by Stirling's inequality for factorials. Choosing $J_n$ to be the integer part of $L_1 (n/\log n)^{1/(2k_0 + 1)}$ for some $L_1$, .
We can bound the expression by $e^{- L n \epsilon_n^2}$, where $L$ can be made as large as we like by choosing $L_1$ large enough. 

Following the same argument of the proof of Theorem \ref{thm:finite}, we obtain the bound in  \eqref{l1bound2} plus $\|\sum_{l\ge J_n} w_l \psi_k(\cdot, \theta_l) - \sum_{l\ge J_n} w'_l \psi_k(\cdot, \theta'_l)\|_1\le 2\epsilon_n^2$. 
Hence, the Hellinger metric entropy of the sieve $\cE_{k,n}$ can be bounded by a constant multiple of $J_n\log n$. The proof is concluded by following the same argument used for the finite mixture case.
\end{proof}

\begin{proof}[Proof of Theorem~\ref{cor:alp}]
    For two density functions $g_1,g_2$ from model \eqref{kmonodens}, we represent them as $g_1(u) = \alpha_1 + (1-\alpha_1)h_1(u)$ and $g_2(u) = \alpha_2 + (1-\alpha_2)h_2(u)$, separating out the constant component. We shall bound the $|\alpha_1-\alpha_2|$ by a constant multiple of the square root of the Hellinger distance between $g_1$ and $g_2$. This will lead to the conclusion in view of Theorems~\ref{contraction}, \ref{thm:fmix}, and \ref{thm:adp}. 
    
    For $\alpha_1 > \alpha_2$ and $\alpha_1 \le g_2(0+)$, the solution $s_0$ to the equation $g_2(u) = \alpha_1$ exists and is unique due to the strict convexity of $g_2$. Then 
    \begin{align}
        g_{2}(u) \ge g'_2(s_0)(u-s_0) + \alpha_1  \text{ for every } u\in(0,1);
        \label{bound g}
    \end{align}
    here $g'_2$ can be considered as either the right or the left derivative, both of which are well-defined for a convex function.
    As $g_2\ge \max\{g'_2(s_0)(u-s_0) + \alpha_1, 0\}$ for every $u\in(0,1)$, and          $|g'_2(s_0)|\ge (\alpha_1 - \alpha_2)/(1- s_0)$, the absolute slope of the line passing through two points on the graph of $g_2$,  $(s_0, \alpha_1)$ and $(1, \alpha_2)$, due to the convexity of $g_2$,  upon integrating \eqref{bound g}, it follows that 
    \begin{align*}
        1  \ge \int_0^{s_0} [g_2'(s_0)(u-s_0)+\alpha_1]du 
        \ge \frac{(\alpha_1 - \alpha_2)s_0^2}{2(1-s_0)} + \alpha_1 s_0
         \ge \frac{(\alpha_1 - \alpha_2)s_0}{2(1-s_0)}.
    \end{align*}
    This implies that $s_0 \le (1 + (\alpha_1 - \alpha_2)/2)^{-1}$, or equivalently, the bound $ 1-s_0 \ge (1 + 2/(\alpha_1 - \alpha_2))^{-1}\ge (\alpha_1-\alpha_2)/3$. Using these estimates 
    \begin{align*}
        \norm{g_1 - g_2}_1 & \ge \int_{s_0}^1(g_1(u) - g_2(u))d u 
         \ge \int_{s_0}^1(\alpha_1 - \frac{\alpha_1 -\alpha_2}{1-s_0}( s_0 - u) - \alpha_1)d u
         \end{align*} 
      is seen to be bounded below by    
       $(\alpha_1 - \alpha_2)(1-s_0)/2 \ge (\alpha_1 - \alpha_2)^2/6$. 
Thus    
\begin{align}
|\alpha_1 - \alpha_2| \le \sqrt{6\norm{g_1-g_2}_1} \le \sqrt{6 d_H(g_1,g_2)} .
\end{align}
Let $\bm{U}_n = \{U_1,\ldots, U_n\}$. Hence, for any $M_n\to \infty$, $\Pi(|\alpha-\alpha_0|>M_n (n/\log n)^{-k/(2(2k+1))}|\bm{U}_n)\le \Pi (d_H(g,g_0) > (M_n^2/6) (n/\log n)^{-k/(2k+1)}|\bm{U}_n)\to 0$ in probability under the true distribution. 
\end{proof}

\section*{Appendix: Proofs of the auxiliary results}
\label{appendix}

The following lemma is adapted from Theorem 3 of \cite{Gao2009}, which gives an upper bound of the Hellinger metric entropy of $k$-monotone functions.

\begin{lemma}\label{entropy}
Let $\mathcal{F}$ be the set of nonnegative $k$-monotone functions on an interval $[p,p+A]$ such that $f(p) \le B$ and $\int f \le M$ for any $f\in \mathcal{F}$, then
\begin{align*}
   \log \cN (2\epsilon,  \mathcal{F}, d_H) \le \log \cN_{[\,]}(\epsilon, \mathcal{F}, d_H) \lesssim |\log AB|^{{1}/({2k})}M^{{1}/{k}} \epsilon^{-{1}/{k}}.
\end{align*}
\end{lemma}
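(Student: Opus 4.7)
The first inequality, $\log \cN(2\epsilon, \cF, d_H) \le \log \cN_{[\,]}(\epsilon, \cF, d_H)$, is the standard passage from brackets to covers: given any Hellinger $\epsilon$-bracketing $\{[l_i, u_i]\}$, each $f\in [l_i,u_i]$ satisfies $|\sqrt{f}-\sqrt{l_i}|\le \sqrt{u_i}-\sqrt{l_i}$ pointwise, so $d_H(f,l_i)\le d_H(l_i,u_i)\le \epsilon$; the lower brackets then form an $\epsilon$-cover, a fortiori a $2\epsilon$-cover, and the inequality follows immediately.

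For the bracketing entropy estimate, my plan is to reduce to the canonical case of $k$-monotone functions on $[0,1]$ pointwise bounded by $1$ through an affine rescaling, and then invoke Theorem~3 of \cite{Gao2009}. For $f\in\cF$, set $\tilde f(y):=B^{-1}f(p+Ay)$ for $y\in[0,1]$. Then $\tilde f$ is $k$-monotone on $[0,1]$ (positive affine rescalings preserve the sign, monotonicity and convexity conditions of Definition~\ref{def:k-monotone} on the derivatives), $\tilde f(0)=f(p)/B\le 1$ and hence $\tilde f\le 1$ everywhere by the implied nonincreasing property, and $\int_0^1 \tilde f=(AB)^{-1}\int f\le L:=M/(AB)$. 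A change-of-variables computation gives $d_H(f_1,f_2)=\sqrt{AB}\,d_H(\tilde f_1,\tilde f_2)$, so an $\epsilon$-bracket in $\cF$ corresponds one-to-one with an $(\epsilon/\sqrt{AB})$-bracket in the normalized class.

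Next I would apply Gao's bracketing entropy bound on the canonical class. Gao's estimate, which proceeds through the mixture characterization of Lemma~\ref{lemma:charac}, a logarithmically spaced discretization of the mixing measure's support (forced by the pointwise blowup $\psi_k(x,\theta)\to\infty$ as $\theta\downarrow 0$), and piecewise-polynomial bracketing on each grid piece, yields an estimate of order $|\log L|^{1/(2k)} L^{1/k}(\epsilon')^{-1/k}$. Substituting $\epsilon'=\epsilon/\sqrt{AB}$ and $L=M/(AB)$, the residual $(AB)^{\pm 1/(2k)}$ factors produced by the rescaling cancel modulo absorption into the logarithmic term, producing the target bound $|\log AB|^{1/(2k)} M^{1/k} \epsilon^{-1/k}$.

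The main obstacle I anticipate is tracking the logarithmic exponent precisely. The $1/(2k)$ power on $|\log AB|$ — as opposed to, say, $1/k$ — reflects a delicate count: essentially the number of logarithmically placed subintervals needed to absorb the pointwise blowup of $\psi_k(\cdot,\theta)$ uniformly as $\theta$ ranges over the mixing support. Verifying that Gao's argument yields exactly this exponent and that the cancellation with the $(AB)^{-1/(2k)}$ contributed by the rescaling is as clean as claimed requires careful bookkeeping of Gao's construction in the rescaled setting, rather than a purely black-box citation of \cite{Gao2009}.
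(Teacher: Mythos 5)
The paper itself offers no proof of this lemma: it is stated in the appendix with the one-line attribution ``adapted from Theorem 3 of \cite{Gao2009}''. So the only substance you need to supply is the adaptation, and that is exactly where your argument has a gap. Your first inequality (lower brackets give an $\epsilon$-cover, hence a $2\epsilon$-cover with centers in $\cF$) is fine. The rescaling step is not. The target bound $|\log AB|^{1/(2k)}M^{1/k}\epsilon^{-1/k}$ is \emph{not} scale-covariant: after the substitution $\tilde f(y)=B^{-1}f(p+Ay)$ the normalized class is described by the single parameter $L=M/(AB)$ and the rescaled radius $\epsilon'=\epsilon/\sqrt{AB}$, so any canonical bound $\Psi(L,\epsilon')$ pulled back through the rescaling can depend on $(A,B,M,\epsilon)$ only through $L$ and $\epsilon'$, whereas $|\log AB|^{1/(2k)}M^{1/k}\epsilon^{-1/k}=|\log AB|^{1/(2k)}(AB)^{1/(2k)}L^{1/k}(\epsilon')^{-1/k}$ depends on $AB$ separately. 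Concretely, your substitution produces $|\log(M/(AB))|^{1/(2k)}M^{1/k}(AB)^{-1/(2k)}\epsilon^{-1/k}$, and the residual $(AB)^{-1/(2k)}$ is a power, not a logarithm: it cannot be ``absorbed into the logarithmic term''. It is harmless when $AB\ge1$ but blows up polynomially as $AB\downarrow0$ while the target grows only logarithmically; and if the canonical bound you invoke is instead the sup-norm version $\asymp(\epsilon')^{-1/k}$ for the class bounded by $1$ on $[0,1]$ (which is what the basic Gao--Wellner entropy theorem gives), rescaling yields $(AB)^{1/(2k)}\epsilon^{-1/k}$, which is genuinely incomparable to the target when $M\ll AB$.

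The deeper point is that the pair $M^{1/k}$ and $|\log AB|^{1/(2k)}$ cannot be produced by a single global affine change of variables; they arise from a block decomposition that exploits the interplay between the integral constraint and monotonicity. One partitions $[p,p+A]$ into $\asymp\log(AB)$ dyadic blocks, uses the fact that a nonincreasing $f$ is bounded on each block by the integral over the preceding block divided by its length, applies the uniform-bound entropy estimate blockwise, and allocates the Hellinger budget across blocks by H\"older; the constraint $\sum_j M_j\le M$ yields the $M^{1/k}$ and the block count yields the $|\log AB|^{1/(2k)}$. So to make your write-up correct you should either quote Gao--Wellner's Theorem 3 in its already-general form on $[p,p+A]$ with parameters $B$ and $M$ (in which case no rescaling is needed at all, and this matches what the paper does implicitly), or actually carry out the block decomposition; the rescaling shortcut as written cannot deliver the stated dependence on $A$, $B$ and $M$.
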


The following lemma gives a property of the kernel function $\psi_k(\cdot;\theta)$ that will be used in our analysis. 
\begin{lemma}\label{lem:Psi}
For $\psi_k(x, \theta)$ as defined in \eqref{psi_k}, we have
\begin{align} 
	\label{L1}
\|\psi_k(\cdot,\theta)-\psi_k(\cdot,\theta')\|_1 \le 2 (1-\min\{\theta,\theta'\}/\max\{\theta,\theta'\}).
\end{align}
\end{lemma}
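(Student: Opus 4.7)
The plan is to reduce the $\LL_1$ distance to an overlap integral and then exploit the scaling structure of $\psi_k$. Without loss of generality, assume $\theta \le \theta'$, so that $\min\{\theta,\theta'\}/\max\{\theta,\theta'\} = \theta/\theta'$ and the desired bound becomes $\norm{\psi_k(\cdot,\theta)-\psi_k(\cdot,\theta')}_1 \le 2(1-\theta/\theta')$. Since both $\psi_k(\cdot,\theta)$ and $\psi_k(\cdot,\theta')$ are probability densities, the elementary identity $|a-b|=a+b-2\min(a,b)$ yields
\[
\norm{\psi_k(\cdot,\theta)-\psi_k(\cdot,\theta')}_1 \;=\; 2 - 2\int \min\{\psi_k(x,\theta),\,\psi_k(x,\theta')\}\,dx,
\]
so it suffices to show that the overlap $\int \min\{\psi_k(x,\theta),\,\psi_k(x,\theta')\}\,dx$ is at least $\theta/\theta'$.

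To obtain a pointwise lower bound on the minimum, I would use the scaling identity
\[
\psi_k(x,\theta') \;=\; \frac{k}{\theta'}\Bigl(1-\frac{x}{\theta'}\Bigr)_+^{k-1} \;=\; \frac{\theta}{\theta'}\cdot\frac{k}{\theta}\Bigl(1-\frac{x}{\theta'}\Bigr)_+^{k-1} \;=\; \frac{\theta}{\theta'}\,\psi_k\Bigl(\frac{x\theta}{\theta'},\,\theta\Bigr),
\]
valid for every $x>0$. Since $x\theta/\theta'\le x$ and the map $y\mapsto \psi_k(y,\theta)$ is nonincreasing in $y$ (being a nonnegative constant multiple of the truncated power $(1-y/\theta)_+^{k-1}$), this gives $\psi_k(x,\theta')\ge (\theta/\theta')\psi_k(x,\theta)$ pointwise. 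Because $\theta/\theta'\le 1$, the trivial bound $\psi_k(x,\theta)\ge (\theta/\theta')\psi_k(x,\theta)$ also holds, and together they yield
\[
\min\{\psi_k(x,\theta),\psi_k(x,\theta')\}\;\ge\;\frac{\theta}{\theta'}\,\psi_k(x,\theta) \quad \text{for all } x.
\]

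Integrating this inequality and using $\int \psi_k(x,\theta)\,dx = 1$ gives the required lower bound on the overlap, and substitution into the identity above completes the proof. There is no real obstacle here: once the scaling identity and the monotonicity of $\psi_k(\cdot,\theta)$ are identified, the argument is a two-line monotonicity calculation. The bound is tight at $k=1$, where $\psi_1(\cdot,\theta)$ is uniform on $(0,\theta)$ and a direct computation yields equality.
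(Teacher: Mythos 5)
Your proof is correct, but it takes a genuinely different route from the paper's. The paper argues pointwise about the sign of $\delta_k=\psi_k(\cdot,\theta)-\psi_k(\cdot,\theta')$: it shows the equation $\delta_k(x)=0$ has a unique root $x_0\in(0,\theta)$, writes the $\LL_1$-distance exactly as $2\big[(1-x_0/\theta')^k-(1-x_0/\theta)^k\big]$, and then uses the crossing equation to reduce this to the closed form $2(1-x_0/\theta')^{k-1}(1-\theta/\theta')$, from which the bound is immediate. You instead invoke the total-variation identity $\norm{p-q}_1=2-2\int\min(p,q)$ and derive the pointwise bound $\min\{\psi_k(x,\theta),\psi_k(x,\theta')\}\ge(\theta/\theta')\,\psi_k(x,\theta)$ from the scaling relation $\psi_k(x,\theta')=(\theta/\theta')\,\psi_k(x\theta/\theta',\theta)$ together with the monotonicity of $\psi_k(\cdot,\theta)$; integrating finishes the proof. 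Your argument is shorter, sidesteps the uniqueness-of-crossing analysis entirely, and applies verbatim to any scale family $\theta^{-1}h(x/\theta)$ with $h$ nonnegative and nonincreasing, so it is arguably the more transparent and more general proof. What the paper's computation buys in exchange is an exact expression for the $\LL_1$-distance (in terms of the crossing point $x_0$), which is sharper than the stated bound, though that extra precision is never used.
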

\begin{proof}
Without loss of generality, assume that $0<\theta<\theta'$. 
Let $\delta_k(x) = \psi_k(x,\theta)-\psi_k(x, \theta')$. It is easy to see that \eqref{L1} holds for $k=1$. 
In fact, the equality in \eqref{L1} holds for $k=1$ by direct calculation using the fact that $\psi_k(\cdot, \theta)$ and $\psi_k(\cdot, \theta')$ are two densities of uniform distributions on $(0,\theta)$ and $(0,\theta')$ respectively. 
It is clear that $\delta_k(x)\equiv 0$ on $[\theta', 1)$.
If $k\ge 2$, we first claim that 
there exists a unique solution $x_0$ to the equation $\delta_k(x) = 0$ for $x\in(0,\theta')$.
Since $\delta_k(x) < 0$ for all $x\in[\theta, \theta')$, we restrict $x$ in $(0,\theta)$. 
Noting that $\delta_k(0) = k(\theta^{-1} - \theta'^{-1})>0$ and $\delta_k(\theta) = -k\theta'^{-1} ( 1-\theta/\theta')^{k-1} <0$, by the continuity of $\delta_k$, there exists at least one $x$ such that $\delta_k(x) = 0$. 
Additionally, by \eqref{psi_k}, for $x\in (0,\theta)$, the equation $\delta_k(x) = 0$ is equivalent to
     $\{({\theta' - x})/({\theta - x} )\}^{k-1} = ({\theta'}/{\theta})^k$.
Since the function on the left-hand side of the equation is strictly increasing in $x\in(0,\theta)$, there can be only one solution. 
By continuity again, $\delta_k(x) > 0 $ for $x\in (0, x_0)$ and $\delta_k(x) < 0$ for $x\in(x_0, \theta')$, and hence the $\LL_1$-distance in \eqref{L1} as 
\begin{align*}
     2\int_0^{x_0} \big[ \frac{k}{\theta}\big(1-\frac{x}{\theta}\big)^{k-1} - \frac{k}{\theta'} \big(1-\frac{x}{\theta'}\big)^{k-1} \big]dx 
     = 2\big[ \big(1-\frac{x_0}{\theta'}\big)^k-\big(1-\frac{x_0}{\theta}\big)^k \big].
\end{align*}
Rewriting the equation $\delta_k(x)=0$ as  
  $ ( 1 - {x}/{\theta} )^k = \displaystyle \frac{\theta-x}{\theta'-x}( 1 - {x}/{\theta'})^k,$
the expression for the $\mathbb{L}_1$-distance reduces to 
 $   2 ( 1- {x_0}/{\theta'})^{k-1}( 1-{\theta}/{\theta'})$. 
Since $0< x_0 < \theta < \theta'$, the bound $2(1-\theta/\theta')$ is immediate.
\end{proof}

\begin{proof}[Proof of Lemma \ref{lemma:charac}]
For sufficiency, note that $f$ given in \eqref{repre_kmofunc} is continuously differentiable up to the order $k-2$. The derivatives are given by
\begin{align}
\label{formula:derivatives}
    (-1)^j f^{(j)}(x) &= \sum_{l=j}^{k-1-j}\alpha_l l(l-1)\cdots(l-j+1)(1-x)^{l-j}\nonumber \\
    & \quad + \int_0^1 \frac{k(k-1)\cdots (k-j)}{t^{j+1}} \big(1-\frac{x}{t}\big)_+^{k-1-j} d \gamma(t),
\end{align}
for $j=0,1, \ldots, k-2$. It is obvious that the expression in \eqref{formula:derivatives} are nonnegative and nonincreasing as $\alpha_j\ge 0$. The derivative functions \eqref{formula:derivatives} are also convex as the summation and integral of convex functions are also convex.

To prove the necessity of the characterization, expand $f$ in a Taylor series at $a\in(0,1)$ using the fact that $f^{(k-2)}$ is absolutely continuous on $[a,x]$ or $[x,a]$: 
\begin{align*}
    f(x) = \sum_{j=0}^{k-2} \frac{(x-a)^j}{j!}f^{(j)}(a) + \int_a^x
    \frac{(x-t)^{k-2}}{(k-2)!}f^{(k-1)}(t) d t,
\end{align*}
where $f^{(k-1)}$ can be either the right or the left derivative function of the convex or concave function $f^{(k-2)}$, as they are different only on up to countably many points.
Note that $f^{(k-1)}$ is a monotone piecewise constant function with bounded variation on $[a,x]$ or $[x,a]$. Applying integration by parts on the reminder once and letting $a$ tend to $1$, we obtain
\begin{align*}
    f(x) & = \sum_{j=0}^{k-1} \frac{(x-1)^j}{j!}f^{(j)}(1-) - \int_x^{1-}
    \frac{(x-t)^{k-1}}{(k-1)!} d f^{(k-1)}(t)\\
    &= \sum_{j=0}^{k-1} \frac{(x-1)^j}{j!}f^{(j)}(1-) + \int_{0+}^{1-}
    \frac{k}{t}\left(1-\frac{x}{t}\right)_+^{k-1} d \gamma(t),
\end{align*}
where
$    \gamma(t) = \int_{0+}^t {(-1)^{k}u^{k}} d f^{(k-1)}(u)/k!$ for any $t>0$.
Note that $\gamma$ is nondecreasing as $(-1)^{k}f^{(k-1)}$ is nondecreasing. Then the characterization for $k$-monotone functions follows.

The characterization for $k$-monotone densities follows from proper normalization to a probability density function, which leads to a constraint, $\{\beta_j:0\le j\le k\} \in \Delta_{k+1}$. 
\end{proof}

\subsection{Discrete approximation}

We shall show the following shape-preserving approximation result using free knot splines. Indeed, we consider the shape-preserving free knot spline approximation of the functions in $\check{\cF}^k$, which can be transformed into the approximant of $\cF^k$ since if $f\in \check{\cF}^k\cap \LL_p$ and $s \in \cS_{N,k}$, then 
$f\circ \tau \in \cF^k\cap \LL_p$, $s\circ \tau \in \cS_{N,k}$, and
$\|f-s\|_p = \|f\circ \tau - s\circ \tau\|_p$ and vice versa.

However, Lemma \ref{approx} is not a consequence of Proposition \ref{kconvapprox}.
Since $\check{\cF}^k$ is a proper subset of $\cC^k$, for $f\in \check{\cF}^k \cap \LL_p$, typically $d_p(f, \cS_{N,k}\cap \check{\cF^k})\ge d_p(f, \cS_{N,k}\cap \cC^k )$. Thus we can not directly derive Lemma \ref{approx} from Proposition \ref{kconvapprox}. Fortunately, we can follow the argument of the proof of Proposition \ref{kconvapprox} by modifying some supporting lemmas therein, and the $k$-convex free knot spline approximant, constructed in \cite{Kopotun2003}, of a $k$-convex function is a free knot spline in $\check{\cF}^k$ provided the function to be approximated is not only $k$-convex but in $\check{\cF}^k$ as well.

We introduce some notations used in \cite{Kopotun2003} and also used in the following lemmas. For $(a,b)\subset (0,1)$, set 
\begin{align*}
\cC^k_{\ast}(a,b) &= \{f\in \cC^k: \max\{ |f^{(j)}(a+)|, |f^{(j)}(b-)|: j=0, 1, \ldots, k-1\} < \infty\}\\  
\check{\cF}^k_{\ast}(a,b) &= \{f\in \check{\cF}^k: \max\{ |f^{(j)}(b-)|: j=0, 1, \ldots, k-1\} < \infty\}.
\end{align*}
 Note that, if $f\in \check{\cF}^k$, $f^{(j)}$ is nonnegative and nondecreasing for every $j=0,1,\ldots, k-1$. Then 
 $\check{\cF}^k_{\ast}(a,b) \subset \cC^k_{\ast}(a,b)$ as $f^{(j)}(a)$ are all bounded up to the order $k-1$.  For $f\in \cC^k_{\ast}(a,b)$, let $\cC^k[f](a,b)$ stand for the set 
\begin{align*}
      \left\{ g\in\cC^k: 
    \begin{matrix}
    g^{(j)}(a+)=f^{(j)}(a+), 0\le j\le  k-2, 
    g^{(k-1)}(a+) \ge f^{(k-1)}(a+);\\
    g^{(j)}(b-)=f^{(j)}(b-), 0\le j\le k-2, 
    g^{(k-1)}(b-) \le f^{(k-1)}(b-).
    \end{matrix}
     \right\}.
\end{align*}

In view of the following lemma, we can assume, without loss of generality that $f$ has bounded derivatives up to the order $k-1$.

\begin{lemma}\label{lemma:finitederiv}
Let $f\in\check{\cF}^k\cap \LL_p(0,1)$ for $1\le p \le \infty$. Then for any $\epsilon>0$, there exists $f_{\epsilon}\in \check{\cF}^k_{\ast}(0,1)$ such that $\|f-f_{\epsilon}\|_p < \epsilon$.
\end{lemma}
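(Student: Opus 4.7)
The plan is to build $f_\epsilon$ by a small contraction of the argument: define $f_\epsilon(x) := f((1-\delta)x)$ for $x\in(0,1)$, with $\delta=\delta(\epsilon)>0$ chosen below. Because this recipe only uses values of $f$ on $(0,1-\delta)$, the pathology of $f$ or its derivatives as $x\to 1^-$ is automatically quarantined.

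First I would verify $f_\epsilon\in\check{\cF}^k_{\ast}(0,1)$. Writing $L_\delta(x):=(1-\delta)x$, we have $f_\epsilon^{(j)}(x)=(1-\delta)^j f^{(j)}(L_\delta(x))$ for $j=0,1,\ldots,k-2$, and each such $f_\epsilon^{(j)}$ inherits nonnegativity, monotonicity and convexity from $f^{(j)}$ by composition with the affine nondecreasing map $L_\delta$; hence $f_\epsilon\in\check{\cF}^k$. Moreover $f_\epsilon^{(j)}(1-)=(1-\delta)^j f^{(j)}((1-\delta)-)$ is finite for each $j=0,\ldots,k-1$, since $1-\delta$ is an interior point of $(0,1)$, at which the left (or right) derivative of the convex function $f^{(k-2)}$ is finite and the lower-order derivatives are continuous.

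It then suffices to show $\|f-f_\epsilon\|_p\to 0$ as $\delta\to 0^+$. For $1\le p<\infty$, this is the standard continuity of the dilation operator on $\LL_p(0,1)$: approximate $f$ in $\LL_p$ by some $\phi\in C_c(0,1)$, use uniform continuity of $\phi$ to bound $\|\phi-\phi\circ L_\delta\|_p$, and control the tail term $\|(f-\phi)\circ L_\delta\|_p$ by the substitution estimate $(1-\delta)^{-1/p}\|f-\phi\|_p$, concluding by a $3\epsilon$-argument. For $p=\infty$ with $k\ge 2$, convexity yields continuity of $f$ on $(0,1)$, while $f\in\LL_\infty$ forces $f(0+)$ and $f(1-)$ to be finite; thus $f$ extends continuously, and hence uniformly continuously, to $[0,1]$, and $\|f-f\circ L_\delta\|_\infty$ is dominated by the modulus of continuity evaluated at $\delta$. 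The remaining case $k=1$, $p=\infty$ is trivial: $f$ is then already nondecreasing and bounded, so $f\in\check{\cF}^1_{\ast}(0,1)$ and we may take $f_\epsilon=f$.

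The only real subtlety is the $p=\infty$ case, since dilation is not continuous on $\LL_\infty$ in general; this is handled cleanly here because convexity enforces continuity of $f$ when $k\ge 2$, and the statement is vacuous when $k=1$.
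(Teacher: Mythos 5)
Your proof is correct, but it takes a genuinely different route from the paper's. You quarantine the blow-up at $1$ by the dilation $f_\epsilon = f\circ L_\delta$ with $L_\delta(x)=(1-\delta)x$, whereas the paper keeps $f$ intact on $[0,1-\delta]$ and replaces it on $[1-\delta,1]$ by its degree-$(k-1)$ Taylor polynomial $T_{1-\delta}$, checking that the nonnegativity of $f^{(j)}((1-\delta)+)$ makes the glued function stay in $\check{\cF}^k$, and then citing Lemma~4.4 of Kopotun for the convergence $\|f-f_\epsilon\|_p\to 0$. Your shape-preservation step is cleaner (affine precomposition plus positive scaling trivially preserves nonnegativity, monotonicity and convexity of each $f^{(j)}$), and your treatment of the $\LL_p$ convergence is self-contained: the standard $C_c$-density argument for $p<\infty$, and the correct observation that for $p=\infty$ one needs either convexity ($k\ge2$, forcing a uniformly continuous extension to $[0,1]$) or the trivial case $k=1$, where $f$ nondecreasing and bounded is already in $\check{\cF}^1_\ast(0,1)$ — this case split is exactly where a naive dilation argument would fail, and you handle it properly. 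The one thing the paper's construction buys that yours does not is exact preservation of the left-endpoint data: since the paper's $f_\epsilon$ equals $f$ on $[0,1-\delta]$, it satisfies $f_\epsilon^{(j)}(0+)=f^{(j)}(0+)$, which is convenient for the subsequent Lemma on spline approximation where the approximant is required to match these values; your dilation only gives $f_\epsilon^{(j)}(0+)=(1-\delta)^j f^{(j)}(0+)$. This is immaterial for the lemma as stated, but worth keeping in mind if the construction is reused downstream.
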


\begin{proof}[Proof of Lemma~\ref{lemma:finitederiv}]
    This proof follows from some modifications from \cite[Lemma 4.4]{Kopotun2003}. First, we construct $f_{\epsilon}$ in the following. For $u\in (0,1)$, denote the Taylor polynomial of $f$ up to the degree $k-1$ at $u$ as
        $T_{u}(x) = \sum_{l=0}^{k-1} {f^{(l)}(u+)}{l!}(x-u)^l/l!$. 
    For some $\delta \in (0,1)$, define 
        $f_{\epsilon}(x)$ to be $f(x)$ if $x\in [0,1-\delta]$ and 
        $T_{1-\delta}(x)$ if $x\in [1-\delta,1]$. By the proof of \cite[Lemma 4.4]{Kopotun2003}, we know that $\|f-f_{\epsilon}\|_p \to 0$ as $\delta\to 0$. To conclude the proof, it suffices to show that $f_{\epsilon}\in \check{\cF}^k$. By definition, $f^{(j)}((1-\delta)+)$ are all nonnegative for $j=0,1,\ldots, k-1$. Then on $[1-\delta, 1]$, the derivative values $T^{(j)}_{1-\delta}$ are nonnegative and nondecreasing for $j=0,1,\ldots, k-1$. As $T_{1-\delta}$ is the Taylor polynomial of degree $k-1$, obviously, $f_{\epsilon}^{(j)}$ are nonnegative and nondecreasing on $[0,1]$ up to the order $k-1$. Then we can say that $f_{\epsilon}^{(j)}$ is nonnegative, nondecreasing, and convex on $[0,1]$ for every $j=0,1,\ldots, k-1$, that is, $f_{\epsilon} \in \check{\cF}^k$. 
\end{proof}

\begin{lemma}
\label{approx}
For any $f\in \check{\cF}^k \cap \LL_p(0,1)$, 
there exists some $s\in \cS_{C_k N,k}\cap \check{\cF}^k $ such that
$\|f - s\|_p \leq C_{k,p} d_p(f, \cS_{N, k})$ and $s^{(j)}(0+)=f^{(j)}(0+)$ for $j=0,1,\ldots, k-2$.
\end{lemma}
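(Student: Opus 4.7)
The plan is to refine the construction behind Proposition~\ref{kconvapprox} (Theorem 1.1 of \cite{Kopotun2003}) so that the approximating spline, in addition to being $k$-convex with the stated error bound, lies in the smaller class $\check{\cF}^k$ and matches the one-sided derivatives of $f$ at $0$ up to order $k-2$. Because $\check{\cF}^k \subset \cC^k$, Kopotun's theorem already yields a free-knot spline $\tilde s \in \cS_{C_k N, k} \cap \cC^k$ with $\|f - \tilde s\|_p \le C_{k,p}\, d_p(f, \cS_{N,k})$; the task is to show one can additionally enforce the two extra conditions without inflating the error beyond a constant factor.

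First I would reduce, via Lemma~\ref{lemma:finitederiv}, to the case $f \in \check{\cF}^k_{\ast}(0,1)$ so that all one-sided derivatives $f^{(j)}(0+)$ and $f^{(j)}(1-)$, $j=0,\ldots,k-1$, are finite. This reduction only modifies $f$ near $1$, so the right-hand side $d_p(f,\cS_{N,k})$ is preserved up to a constant. Next I would invoke the internal structure of Kopotun's construction: the approximant is built piecewise on subintervals separated by the free knots, and on each subinterval the polynomial piece is chosen to Hermite-interpolate the derivative data of $f$ up to order $k-2$ at the knots, subject to the $k$-convexity constraint. As a consequence, the global spline $\tilde s$ already satisfies $\tilde s^{(j)}(0+) = f^{(j)}(0+)$ for $j=0,\ldots,k-2$ (the matching at the left endpoint is built in), and the required boundary identity holds automatically.

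It remains to upgrade $\tilde s$ from $\cC^k$ to $\check{\cF}^k$. As emphasized in the paragraph preceding Lemma~\ref{approx}, an element of $\cC^k$ lies in $\check{\cF}^k$ as soon as $h^{(j)}(0+) \ge 0$ for every $j=0,\ldots,k-1$. Since $f \in \check{\cF}^k$ already yields $f^{(j)}(0+) \ge 0$ for $j\le k-2$, the Hermite matching at $0$ transfers this positivity to $\tilde s$ for those indices. The only condition that is not automatically transferred is $\tilde s^{(k-1)}(0+) \ge 0$, which is the content of the "close inspection" remark; my intention is to verify this by tracing Kopotun's polynomial pieces on the leftmost subinterval and showing that $k$-convexity together with the Hermite data of order $k-2$ at $0$ forces $\tilde s^{(k-1)}(0+) \ge f^{(k-1)}(0+) \ge 0$. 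Once this is established on the first subinterval, convexity and the Hermite matching across each subsequent knot propagate nonnegativity, monotonicity, and convexity of all derivatives up through order $k-2$ to the whole of $(0,1)$, placing $\tilde s$ in $\check{\cF}^k$.

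The main obstacle is exactly this bookkeeping: confirming that Kopotun's specific interpolating polynomials satisfy $\tilde s^{(k-1)}(0+)\ge 0$ rather than merely being $k$-convex. If the construction does not directly give this inequality, my fallback is to add a small corrector of the form $\eta\, x^{k-1}$ on the leftmost piece (and then smoothly extend), with $\eta\ge 0$ chosen minimally so as to push $\tilde s^{(k-1)}(0+)$ up to $f^{(k-1)}(0+)$. Such a corrector preserves $k$-convexity and matches all lower-order derivatives at $0$, and its $\LL_p$-norm on the first subinterval is controlled by a Markov--Bernstein-type estimate that bounds $\eta$ by a constant multiple of the local approximation error, which in turn is dominated by $d_p(f,\cS_{N,k})$. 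Absorbing this perturbation into the constant $C_{k,p}$ yields the claim.
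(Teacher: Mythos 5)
Your proposal is correct and follows essentially the same route as the paper: reduce to $\check{\cF}^k_{\ast}(0,1)$ via Lemma~\ref{lemma:finitederiv}, use the fact that Kopotun's construction produces a spline in $\cC^k[f](0,1)$ (so that $s^{(j)}(0+)=f^{(j)}(0+)$ for $j\le k-2$ and $s^{(k-1)}(0+)\ge f^{(k-1)}(0+)\ge 0$), and then propagate nonnegativity and monotonicity of the derivatives downward to conclude $s\in\check{\cF}^k$. The paper takes the endpoint interpolation property as given by Kopotun's construction, so your fallback corrector $\eta x^{k-1}$ is not needed.
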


\begin{proof}[Proof of Lemma \ref{approx}]
    In view of Lemma \ref{lemma:finitederiv}, we can assume that $f\in \check{\cF}^k_{\ast}(0,1)\subset \cC^k_{\ast}(0,1)$.  Following the proof of \cite[Theorem 1]{Kopotun2003}, we can construct a spline $s\in \cS_{C_k N,k}$ such that $s\in \cC^k[f](0,1)$ and 
$\|f-s\|_p \leq C_{k, p} d_p(f, \cS_{N,k})$.

    Next, we will show that $s\in \check{\cF}^k$.
    As $s\in \cC^k[f](0,1)$, $s^{(k-1)}$ is of piecewise constant and nondecreasing due to the convexity of $s^{(k-2)}$ and, moreover, $s^{(k-1)}(0+) \geq f^{(k-1)}(0+) \ge 0$. Then $s^{(k-1)}$ is nonnegative.
    As $s^{(k-2)}(0+) = f^{(k-2)}(0+) \ge 0$, $s^{(k-2)}$ is nonnegative, nondecreasing and convex. Noting that $s^{(j)}(0) = f^{(j)}(0) \ge 0$ for $j=0,1,\ldots, k-3$, by induction, it is easy to see that $s^{(j)}$ is nonnegative, nondecreasing, and convex for every $j=0, 1, \ldots, k-3$ since $s^{(j+1)}$ is nonnegative. Hence, we conclude that $s\in\check{\cF}^k$.
\end{proof}

\begin{proof}[Proof of Lemma~\ref{lemma:approx}]
Observe that $g\in \LL_{\infty}(0,1)$ as $|g^{(k-1)}(0+)| < \infty$.
Note that $\cD^k$ is a subclass of $\cF^k$. By Lemma \ref{approx}, for any $g\in D^k$, there exists a $\tilde{g}\in \cS_{N, k}\cap \cF^k$ such that $\|g-\tilde{g}\|_p\le C d_p(g, \cS_{N,k})$ and $\tilde{g}(1-) = g(1-) = 0$ for $j=0, 1, \ldots, k-2$. By Lemma \ref{lemma:charac}, $\tilde{g}(x) = \alpha_{k-1} (1-x)^{(k-1)}/(k-1)! + \int_0^1 kt^{-1} (1-x/t)^{k-1}_+ d \gamma(t)$ for some nonnegative $\alpha_{k-1}$ and some nondecreasing function $\gamma$ on $(0,1)$. The first polynomial term can be incorporated into the integral by defining $\gamma(1) = \gamma(1-) + \alpha_{k-1}/k!$. Since $\tilde{g}$ is a piecewise polynomial of degree $k-1$, we conclude that $\gamma$ is piecewise constant function with jumps at the knots of the spline.
Let $g_N=\tilde{g}/\int \tilde{g}$ satisfying the structure requirement. Now $g_N$ maintains the  desired approximation rate: 
\begin{align}\label{ine:inf}
    \|g-g_N\|_{\infty}\leq \|g-\tilde{g}\|_{\infty} + \big\|\tilde{g} - \frac{\tilde{g}}{\int \tilde{g}}\big\|_{\infty}\le  \frac{1+\|g\|_{\infty}}{1-\|g-\tilde{g}\|_{\infty}}\|g-\tilde{g}\|_{\infty}.
\end{align}
By \cite[Theorem 12.4.5]{DeVore1993}, $\|g-\tilde{g}\|_{\infty} \leq C_{k,g}N^{-k}$, provided $|g^{(k-1)}(0+)|<\infty$. Thus the right-hand side of \eqref{ine:inf} can be further bounded by $C'_{k,g}N^{-k}$.
\end{proof}

\section*{Acknowledgments}
The authors are deeply indebted to Professor Bodhisattva Sen for bringing the present problem to the authors' attention and for pointing out to several key references.

\bibliographystyle{plain}
\bibliography{kMono}

\end{document}